\def\p{\partial}
\def\mb{\mathbb}
\def\mc{\mathcal}
\def\n{\nabla}
\theoremstyle{plain}
\theoremstyle{plain}
\newtheorem{theorem}{Theorem}[section]
\theoremstyle{plain}
\newtheorem{definition}[theorem]{Definition}
\theoremstyle{plain}
\newtheorem{lemma}[theorem]{Lemma}
\theoremstyle{remark}
\newtheorem{remark}[theorem]{Remark}
\theoremstyle{remark}
\theoremstyle{remark}
\theoremstyle{plain}
\newtheorem{proposition}[theorem]{Proposition}
\numberwithin{equation}{section}
\theoremstyle{plain}
\begin{document}

\title{Spectral constant rigidity of warped product metrics}

\author{Xiaoxiang Chai}
\address{Xiaoxiang Chai: Department of Mathematics, POSTECH, Pohang, Gyeongbuk, South Korea}
\email{xxchai@kias.re.kr, xxchai@postech.ac.kr}

\author{Juncheol Pyo}
\address{Juncheol Pyo: Department of Mathematics, Pusan National University, Busan 46241, Korea and
Korea Institute for Advanced Study, Seoul 02455, Korea}
\email{jcpyo@pusan.ac.kr}

\author{Xueyuan Wan}
\address{Xueyuan Wan: Mathematical Science Research Center, Chongqing University of Technology, Chongqing 400054, China.}
\email{xwan@cqut.edu.cn}


\begin{abstract}
  A theorem of Llarull says that if a smooth metric $g$ on the
  $n$-sphere $\mathbb{S}^n$ is bounded below by the standard round metric and
  the scalar curvature $R_g$ of $g$ is bounded below by $n (n - 1)$, then the metric $g$ must be the standard round metric. We prove
  a spectral Llarull theorem by replacing the bound $R_g \geq n (n - 1)$
  by a lower bound on the first eigenvalue of an elliptic operator involving
  the Laplacian and the scalar curvature $R_g$. We utilize two methods:
 spinor and spacetime harmonic function.
\end{abstract}

 \subjclass[2020]{53C24, 53C27, 58C40}  
 \keywords{Spectral constant, rigidity, Llarull's theorem, warped product metric, scalar curvature, band width estimate}
  \thanks{X. Chai has been supported by the National Research Foundation of Korea (NRF) grant
funded by the Korea government (MSIT) (No. RS-2024-00337418) and an NRF grant No. 2022R1C1C1013511. J. Pyo is partially supported by NRF grant funded by MSIT (No. NRF-2020R1A2C1A01005698 and NRF-2021R1A4A1032418). X. Wan is partially supported by the National Natural Science Foundation of China (Grant No. 12101093) and the Natural Science Foundation of Chongqing (Grant No. CSTB2022NSCQ-JQX0008), the Scientific Research Foundation of the Chongqing University of Technology. X. Chai would like to thank Yukai Sun (PKU, Beijing) for pointing out a typo in the constants $a,b$. }
\maketitle


\section{Introduction}

In his \textit{Four lectures on scalar curvature} \cite[Section 6.1.2]{gromov-four-2021}, Gromov asked the following question:

\textit{  What are effects on the topology and/or metric geometry of a Riemannian manifold $X$ played by the positivity of the}
\[L_\gamma : f(x) \mapsto -\Delta f(x) + \gamma \cdot \mathrm{Sc}(X,x)f(x)\]
\textit{for a given constant $\gamma>0$?}

In \cite{hirsch-spectral-2023-arxiv}, a spectral version of the band width inequality (\cite{gromov-metric-2018}) was proven and it is similar to a conjecture in \cite[Section 6.1.2]{gromov-four-2021} (listed as Item 7 in that section).
We are interested in a spectral version of the following scalar curvature comparison on $\mathbb{S}^n$ of Llarull {\cite{llarull-sharp-1998}}.

\begin{theorem}
  \label{llarull}If $(\mathbb{S}^n, g)$ satisfies $g \geq \bar{g}$ and
  $R_g \geq n (n - 1)$ where $\bar{g}$ is the standard round metric on
  $\mathbb{S}^n$, then $g = \bar{g}$.
\end{theorem}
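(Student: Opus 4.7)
The plan is to follow Llarull's original spinorial argument via a twisted Dirac operator, since that is one of the two methods the paper intends to generalize to the spectral setting. The identity map $f : (\mathbb{S}^n, g) \to (\mathbb{S}^n, \bar g)$ has degree one, and the hypothesis $g \geq \bar g$ says that $f$ is $1$-Lipschitz, which crucially means that when we pull back the round spinor bundle $\bar S \to (\mathbb{S}^n, \bar g)$ via $f$, the resulting endomorphism controlling curvature contributions will be pointwise bounded.

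First I would form the twisted bundle $S \otimes f^* \bar S$ (using half-spinor bundles when $n$ is odd) over $(\mathbb{S}^n, g)$ together with the induced twisted Dirac operator $D$. A topological/index-theoretic input, namely the Atiyah--Singer index theorem applied to $D$, produces a nontrivial harmonic twisted spinor $\sigma$; here the key point is that the pullback bundle has nonzero top Chern/Euler class because $f$ has degree one, so $\operatorname{ind} D \neq 0$. Next I would apply the Schr\"odinger--Lichnerowicz--Weitzenb\"ock formula
\[
D^2 = \nabla^* \nabla + \tfrac{1}{4} R_g + \mc R^{f^* \bar S},
\]
integrate against $\sigma$, and estimate the twist curvature term $\mc R^{f^* \bar S}$ from below. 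The standard computation on the round sphere gives eigenvalues of the curvature operator of $\bar S$ equal to $\pm \tfrac{1}{2}$ on each $2$-plane, so after the $1$-Lipschitz pullback one obtains the sharp bound $\mc R^{f^* \bar S} \geq -\tfrac{n(n-1)}{4}$ pointwise, with equality only when $f$ is a local isometry at the point in question.

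Combining this estimate with the scalar curvature hypothesis $R_g \geq n(n-1)$ gives $D^2 \geq 0$ in the integrated sense, forcing $\nabla \sigma \equiv 0$ and simultaneous equality in both curvature bounds on the support of $\sigma$. The rigidity conclusion is then extracted from the equality cases: $R_g \equiv n(n-1)$ everywhere, and pointwise saturation of $\mc R^{f^* \bar S} \geq -\tfrac{n(n-1)}{4}$ forces $f$ to be a local isometry wherever the parallel spinor does not vanish, which by unique continuation is everywhere; hence $g = \bar g$.

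I expect the main obstacle to be the sharp pointwise bound on $\mc R^{f^* \bar S}$ together with an accurate identification of the equality case, since this is where the geometric content of the $1$-Lipschitz condition $g \geq \bar g$ is fed in. Propagating the pointwise equality to a global isometry (as opposed to only a local one on an open set) will require the unique continuation property of harmonic Dirac spinors. For the spectral generalization later in the paper, this same step is precisely where the hypothesis $R_g \geq n(n-1)$ will be relaxed to a first-eigenvalue assumption on $L_\gamma = -\Delta + \gamma R_g$, so the Weitzenb\"ock identity will need to be reorganized so that the spinor norm plays the role of the test function for $L_\gamma$.
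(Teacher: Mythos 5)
Your sketch is essentially Llarull's original spinorial argument, which the paper does not reprove (it cites \cite{llarull-sharp-1998}) but which is precisely the template generalized in Section~\ref{spin proof}; so the approach is the one the paper relies on and the logical skeleton (index theorem $\Rightarrow$ nontrivial harmonic twisted spinor, Lichnerowicz formula, pointwise estimate $\langle \mc R^{f^*\bar S}\sigma,\sigma\rangle \geq -\tfrac{1}{4}\sum_{k\neq\ell}\mu_k\mu_\ell |\sigma|^2 \geq -\tfrac{n(n-1)}{4}|\sigma|^2$, and rigidity from saturation of every inequality) is correct.

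Two small inaccuracies are worth noting. First, the parenthetical ``using half-spinor bundles when $n$ is odd'' is backwards: the decomposition $\bar S = \bar S^+\oplus \bar S^-$ of the round spinor bundle exists when $n$ is \emph{even}. For odd $n$ the twisted Dirac operator on a closed odd-dimensional manifold is self-adjoint and has vanishing index, so the index argument as stated fails; one must instead pass to $\mathbb{S}^n\times\mathbb{S}^1$ (as the paper does in the even-$n$ case of its own theorem) or twist the spinor bundle of $\mathbb{S}^{n-1}$ (as the paper does when $n$ is odd). Second, once $\nabla\sigma\equiv 0$ is obtained from the integrated Weitzenb\"ock identity, $|\sigma|$ is constant and hence nonvanishing on all of $\mathbb{S}^n$, so the appeal to unique continuation for Dirac spinors is superfluous; the equality $\mu_1=\dots=\mu_n=1$ at every point already reads off $f^*\bar g = g$, i.e.\ $g=\bar g$.
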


Currently there are various proofs available for this result, for instance, by spinor methods \cite{llarull-sharp-1998, BBHW, WX}, spacetime harmonic function \cite{hirsch-rigid-2022-arxiv} and by surfaces of prescribed mean curvature \cite{hu-rigidity-2023}. 
The works \cite{BBHW, WX} also handled warped product metrics.

Now we introduce the concept of a $c$-spectral constant introduced in {\cite{hirsch-spectral-2023-arxiv}}.
\begin{definition}[$c$-spectral constant]
Let $( M^n ,g)$ be an $n$-dimensional open Riemannian manifold. For any $c\in \mathbb{R}$,  the {\it $c$-spectral constant} is defined as 
\begin{equation*}
  \Lambda_c=\inf\left\{\int_ M (|\nabla u|^2+cR_gu^2)dV\Big|u\in H_0^1( M ),\int_ M  u^2dV=1\right\},\label{L c}
\end{equation*}
where $R_g$ denotes the scalar curvature of $g$ and $H_0^1\left(M^n\right)$ is the Sobolev space of $L^2$ functions with square integrable derivatives arising as the completion of $C_0^{\infty}\left(M^n\right)$, the space of smooth functions with compact support, in the Sobolev $H^1$-norm. When $M^n$ is a compact manifold with boundary, $\Lambda_c$ is defined as the $c$-spectral constant of the interior $\mathring{M}^n$.
\end{definition}
When it was needed to emphasize the dependence of $\Lambda_c$ on the metric $g$, we also use the notation $\Lambda_c(g)=\Lambda_c$.
We replace the pointwise scalar curvature bound $R_g \geq n (n - 1)$ by
the bound
\begin{equation}
  \Lambda_c \geq \Lambda > 0, \label{L c lower bound}
\end{equation}
which can be viewed as a bound on the scalar curvature $R_g$ in some integral sense or weak
sense.

Before stating our spectral version of Llarull's result \cite{llarull-sharp-1998}, we give a description of the model metric.
We fix two positive constants $c>\tfrac{1}{4}$ and $\Lambda$. Let $(\mathbb{S}^{n-1},g_{\mathbb{S}^{n-1}})$ be the standard round sphere of dimension $n-1$ and $g_0$ be  the following warped product metric 
\begin{equation*}
  g_0= d\theta\otimes d\theta+a^2\sin^2(b\theta)g_{\mathbb{S}^{n-1}}
\end{equation*}
on $ (0,\frac{\pi}{b})\times\mathbb{S}^{n-1}$, where $a,b>0$ are two positive constants given by
\begin{align}\begin{split}
  a&=\sqrt{\frac{(n-1)(n-2)(4n-(n-1) c^{-1} )}{\Lambda(4(n-2)-(n-3) c^{-1} )}},\\
  b&=\frac{\sqrt{\Lambda}(4- c^{-1} )}{\sqrt{(4n-(n-1) c^{-1} )(4(n-1)-(n-2) c^{-1} )}}.
  \end{split}\end{align}
From Appendix \ref{App}, the two constants $a,b$ satisfy the following condition
\begin{equation}
  \beta_2 a_1^2-\beta_2 a_1 b-\frac{n(n-2)}{4a^2}=0\text{ and }\Lambda_{ c }(g_0)=\Lambda, \label{beta2 constants relation}
  \end{equation}
where 
$
  a_1=\frac{1}{2}\sqrt{\frac{\Lambda(4n-(n-1) c^{-1} )}{4(n-1)c-(n-2)}}$ and $\beta_2=\frac{n}{n-1}\frac{4(n-1)-(n-2) c^{-1} }{4n-(n-1) c^{-1} }.
  $
Also, we remark that our requirement that $c>\tfrac{1}{4}$ is due to the fact that in the proof of Theorem \ref{thm1} we have used the function $u\in H_0^1$ achieving
  $\Lambda_c(g_0)$ in the manifold $(\mathbb{S}^{n-1}\times (0,\tfrac{\pi}{b}),g_0)$ satisfies $\lim_{s\to 0}u(s)=\lim_{s\to \pi/b}u(s)=0$ (see Appendix \ref{App}).

Our spectral Llarull theorem is the following.
\begin{theorem}\label{thm1}
 Let $M$ be an $n$-dimensional, non-compact, connected spin manifold without boundary. Fix two positive constants $c>\frac{1}{4}$ and $\Lambda$, and let $g$ be a Riemannian metric on $ M $ such that
  $\Lambda_c (g) \geq \Lambda_c(g_0)=\Lambda$. Suppose that $\Phi:( M ,g)\to ( (0,\frac{\pi}{b})\times\mathbb{S}^{n-1},g_0)$ is a smooth map with the following properties:
\begin{itemize}
  \item $\Phi$ is proper;
  \item $\Phi$ has non-zero degree;
  \item $\Phi$ is $1$-Lipschitz.
\end{itemize}
 Then $\Phi$ is a Riemannian isometry and ${\Lambda}_ c =\Lambda$. 
\end{theorem}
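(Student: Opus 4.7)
The strategy is to adapt the standard spinor proof of Llarull's theorem (as in \cite{llarull-sharp-1998,BBHW,WX}) to the spectral setting by weighting the Dirac operator with a first eigenfunction of $L_c=-\Delta_g+cR_g$, and to control the geometry at infinity by a Callias-type index tied to the properness and nonzero degree of $\Phi$. As a preparation, let $u_0>0$ realize $\Lambda_c(g_0)=\Lambda$ on the model (available precisely because $c>\tfrac14$), and, by solving on an exhaustion of $M$ by the $\Phi$-preimages of relatively compact subsets of the model, produce a positive function $u$ on $M$ with $L_cu\geq\Lambda u$ whose behavior at infinity is controlled through $u_0\circ\Phi$ via the $1$-Lipschitz assumption.

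Pull back along $\Phi$ the twisting bundle $E_0$ used in the Llarull computation on $g_0$ (built from the spinor bundle of $\mathbb{S}^{n-1}$ together with a polarizing vector field in the $\theta$-direction), form the twisted Dirac operator $D=D^{S_M\otimes\Phi^*E_0}$, and modify it by $u$ to obtain a weighted Dirac operator
\begin{equation*}
D_u\psi=D\psi+\beta\,u^{-1}\nabla u\cdot\psi,
\end{equation*}
with $\beta=\beta(c,n)$ chosen so that the resulting Weitzenb\"ock identity absorbs the eigenvalue equation $-\Delta u+cR_gu=\Lambda u$ cleanly into the scalar-curvature term of the classical Lichnerowicz formula. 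A Callias-type Fredholm argument, using the polarization of $E_0$ at infinity transported back by $\Phi$, produces a nontrivial $L^2$ solution $\psi$ of $D_u\psi=0$ whose index is computed by the proper degree of $\Phi$.

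Integrate the Weitzenb\"ock identity for $D_u^2\psi=0$. The $1$-Lipschitz bound on $\Phi$ together with the explicit radial and tangential sectional curvatures of the warped product $g_0$ gives a lower bound on the twisting curvature term $\langle\mathcal{R}^{\Phi^*E_0}\psi,\psi\rangle$; grouping with the weighted $R_g$-term and invoking $L_cu\geq\Lambda u$ yields an integral inequality of the schematic form
\begin{equation*}
0\leq\int_M\Bigl[\bigl(\beta_2 a_1^{2}-\beta_2 a_1 b-\tfrac{n(n-2)}{4a^2}\bigr)|\psi|^{2}+(\Lambda_c(g)-\Lambda)|\psi|^{2}+Q(\nabla\psi,\nabla u)\Bigr]\,dV,
\end{equation*}
with $Q$ a pointwise nonnegative quadratic form. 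The first coefficient vanishes identically by \eqref{beta2 constants relation}, the second is nonnegative by hypothesis, and the Weitzenb\"ock identity forces the same integral to be $\leq 0$; hence every summand vanishes. The usual Llarull equality analysis applied to the pointwise term then forces $\Phi^*g_0=g$, so $\Phi$ is a local Riemannian isometry, and properness together with nonzero degree and connectedness of $M$ upgrade it to a global isometry, while $\Lambda_c(g)=\Lambda$ follows from the vanishing of the second summand.

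The main obstacle is the second step: the twisting bundle $E_0$ and the weight exponent $\beta$ must be tuned so that the weighted Weitzenb\"ock identity reproduces exactly the algebraic relation \eqref{beta2 constants relation} in the sharp case. The appendix computation establishes \eqref{beta2 constants relation} at the ODE level on the warped-product model; lifting it to the spinor setting effectively \emph{characterizes} $a_1$ and $\beta_2$ as the coefficients that appear in the weighted Lichnerowicz formula, and any mismatch collapses the argument. A secondary, more technical difficulty is the $L^{2}$ theory of the weighted twisted Dirac operator on the noncompact $M$ and the Callias index computation guaranteeing nontriviality of $\psi$; this is precisely where properness of $\Phi$ is used essentially.
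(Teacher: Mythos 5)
Your proposal replaces the paper's mechanism for using the spectral hypothesis with a different one, and that replacement is where the gap lies. The paper modifies the twisted Dirac operator by a \emph{scalar} Callias potential $\tfrac{in}{2}\Psi_{\delta,\varepsilon}$, where $\Psi_{\delta,\varepsilon}$ is a regularization of the fixed model function $f(\theta)=\tfrac{2a_1}{n}\cot(b\theta)$ pulled back by $\Theta$ -- not by an eigenfunction on $M$. The hypothesis $\Lambda_c(g)\ge\Lambda$ is never converted into a pointwise inequality: instead, a refined Kato/Cauchy--Schwarz splitting of the Penrose term, as in \eqref{eqn7}--\eqref{eqn3}, reserves a $\tfrac{c^{-1}}{4}|\nabla|\tilde u||^2$ contribution, and the defining infimum for $\Lambda_c$ is then applied \emph{variationally} with the spinor norm $|\tilde u|$ as the test function, giving $\int(c^{-1}|\nabla|\tilde u||^2+R|\tilde u|^2)\ge c^{-1}\Lambda_c\int|\tilde u|^2$. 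This is how the spectral constant $\Lambda_c(g)$ of the unknown metric, and not merely the model constant $\Lambda$, enters the final inequality. Your Witten-type deformation $D_u=D+\beta u^{-1}\nabla u\cdot$ has no such mechanism: even granting a positive $u$ on noncompact $M$ with $L_c u=\Lambda_c(g)u$ (itself a Fischer-Colbrie--Schoen type step you do not justify), the Weitzenb\"ock identity for $D_u$ produces $R_g/4$ together with $\beta u^{-1}\Delta u=\beta(cR_g-\Lambda_c(g))$; choosing $\beta=-(4c)^{-1}$ cancels the $R_g$ contributions but leaves first-order cross terms $\nabla_{\nabla\log u}$ and a coefficient of indefinite sign on $|\nabla\log u|^2$, all of which must then be reconciled with the twisting-curvature bound so as to reproduce exactly \eqref{beta2 constants relation}. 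You explicitly defer this as the ``main obstacle,'' but that matching \emph{is} the proof; asserting that $\beta$ can be tuned is not an argument.

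There is also a structural gap on the Fredholm side. The paper does not prove an index theorem on the noncompact $M$ for a deformed operator. It truncates to compact bands $M_\delta$ cut out by regular level sets of $\Theta$, crosses with $\mathbb S^1$ and uses the Kramer map $h:\mathbb S^{n-1}\times\mathbb S^1\to\mathbb S^n$ when $n$ is even, imposes APS-type boundary conditions, obtains a nonvanishing index on $\tilde M_\delta$ from \cite[Prop.\ 2.4]{BBHW}, and then sends $\delta,\varepsilon\to 0$, $r\to\infty$, concluding rigidity via B\"ar's unique-continuation theorem. Your proposed ``Callias-type Fredholm argument producing a nontrivial $L^2$ solution of $D_u\psi=0$'' is not obviously available: $\beta u^{-1}\nabla u\cdot$ is Clifford multiplication by an a priori unbounded vector field, not a self-adjoint bundle endomorphism of the Callias type, and its interplay with the polarization of $E_0$, with $L^2$-integrability near the two ends, and with the even/odd distinction is left entirely open. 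As written, the proposal identifies the right ingredients but does not supply either of the two steps -- the spectral-to-spinorial bridge and the index-theoretic input -- on which the theorem actually rests.
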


For $c\neq 0$, denote $ \kappa :=c^{-1}$, and we define the following 
\begin{equation*}
\tilde{\Lambda}_{ \kappa }:=  c^{-1}\Lambda_c=\inf\left\{\int_ M ( c^{-1} |\nabla u|^2+R_gu^2)dV\Big|u\in H_0^1( M ),\text{ }\int_ M  u^2dV=1\right\}.
\end{equation*}
The definition of $\tilde{\Lambda}_{\kappa}$ is easily seen to extend to $ \kappa =0$, and it is given by
\begin{equation*}
\tilde{\Lambda}_{ 0} =\inf\left\{\int_ M  R_gu^2dV \Big| u \in H_0^1( M ),\text{ }\int_ M  u^2dV=1\right\}.
\end{equation*}
A lower bound $\tilde{\Lambda}_0\geq\tilde{\Lambda}$ for some number $\tilde{\Lambda}$ is easily seen to be equivalent to the pointwise bound $R_g\geq \tilde{\Lambda}$.
The proof of Theorem \ref{thm1} carries over to the case $\kappa=0$ with suitable modifications.
In fact, for $ \kappa =0$ and $\tilde{\Lambda}_0\geq \tilde{\Lambda}>0$, then
\begin{equation*}
  a=\sqrt{\frac{n(n-1)}{\tilde{\Lambda}}}=\frac{1}{b}.
\end{equation*}
By rescaling, we may assume that $\tilde{\Lambda}=n(n-1)$, then $R_g\geq n(n-1)$.
In this case, $a=b=1$, and the above theorem implies the following rigidity result.
\begin{theorem}[{\cite[Theorem 1.4]{BBHW} and \cite[Theorem 1.5]{WX}}]
Let $n\geq 3$ and let $( M ,g)$ be an $n$-dimensional non-compact, connected spin manifold without boundary, $R_g\geq n(n-1)$. Suppose that $\Phi:( M ,g)\to ( (0,\pi)\times\mathbb{S}^{n-1},d\theta\otimes d\theta +\sin^2(\theta)g_{\mathbb{S}^{n-1}})$ is a smooth map satisfying the properties in Theorem \ref{thm1}. Then $\Phi$ is a Riemannian isometry.
\end{theorem}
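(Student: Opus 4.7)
The plan is to view this result as the specialization of Theorem \ref{thm1} to the limit $\kappa = c^{-1} = 0$. In that limit, $c \to \infty$ and the integral condition $\Lambda_c(g) \geq \Lambda$ degenerates, after dividing by $c$, to the pointwise inequality $R_g \geq \tilde{\Lambda}$; after rescaling so that $\tilde{\Lambda} = n(n-1)$, the warping constants collapse to $a = b = 1$, and $g_0$ is the restriction of the standard round metric on $\mathbb{S}^n$ to the complement of the two poles. The strategy is therefore to reproduce the spinor argument used for Theorem \ref{thm1}, but with every contribution of the potential $|\nabla u|^2$ switched off.

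First I would construct a twisted Dirac-Callias operator. Pull back the intrinsic spinor bundle $S_0$ of the model $((0,\pi)\times\mathbb{S}^{n-1}, g_0)$ along $\Phi$ to obtain a Hermitian bundle with connection $E = \Phi^* S_0 \to M$, and form the twisted Dirac operator $D_E$ acting on $SM \otimes E$, where $SM$ is the spinor bundle of $(M,g)$. To handle non-compactness, I would add a Callias-type potential built from $\cos(\theta \circ \Phi)$, which tends to $\pm 1$ at infinity because $\Phi$ is proper; this yields a Fredholm operator whose Callias-type index is determined by $\deg \Phi$ and is therefore non-zero. This non-triviality produces a non-trivial section $\sigma$ in the kernel.

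Next I would apply the Lichnerowicz-Schrödinger-Weitzenböck identity. The bulk curvature contribution is $\tfrac{1}{4} R_g$ plus a twist-curvature term coming from the Riemann tensor of $g_0$, plus the square of the Callias potential and a first-derivative term. Using $R_g \geq n(n-1)$, the $1$-Lipschitz assumption on $\Phi$, and the fact that $\mathbb{S}^n$ has sectional curvature identically $1$, Llarull's pointwise algebraic trick shows this combination is non-negative, with equality forcing $d\Phi$ to be an isometry. Integrating the identity against $\sigma$ and using the fact that $\sigma$ lies in the kernel of the Callias operator, every non-negative pointwise term must vanish; in particular $\sigma$ is parallel (hence nowhere zero) and $d\Phi$ is an isometry at every point, giving $\Phi^* g_0 = g$ on all of $M$.

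The main obstacle is setting up the Callias framework so that a non-zero relative index is available and the spinor $\sigma$ is globally defined and non-trivial; this is where non-compactness and properness enter in an essential way, and it is what the detailed spin-geometric portion of Theorem \ref{thm1} has to supply with $\kappa = 0$ plugged in. Once the Callias setup is in place, the curvature estimate is entirely local and reduces to the classical Llarull computation, which has been executed in \cite{BBHW, WX} and can be invoked directly to close the argument.
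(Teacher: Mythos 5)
Your first paragraph matches the paper's logic exactly: the statement is the $\kappa=c^{-1}=0$ specialization of Theorem~\ref{thm1}, where the spectral hypothesis degenerates to the pointwise bound $R_g\geq n(n-1)$ after rescaling and the warping constants reduce to $a=b=1$. The paper proves the corollary in precisely this way (``the proof of Theorem~\ref{thm1} carries over to the case $\kappa=0$ with suitable modifications''), and cites BBHW and WX for the original result; so far, so good.

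However, the spinor sketch in your second and third paragraphs has a concrete error. You propose a Callias potential built from $\cos(\theta\circ\Phi)$ on the grounds that it tends to $\pm1$ at infinity, but a bounded potential cannot work here. The twist-curvature term in the Lichnerowicz--Weitzenb\"ock formula satisfies only $\langle\mathcal{R}^{\tilde{E}}\tilde u,\tilde u\rangle\geq -\tfrac{(n-2)(n-1)}{4}\tfrac{1}{a^2\sin^2(b\Theta)}|\tilde u|^2-\cdots$, which blows up as $\Theta\to 0$ or $\Theta\to\pi/b$: although $\Phi$ is $1$-Lipschitz into $g_0$, the composite $\tilde f$ into $\mathbb{S}^n$ has Lipschitz constant $1/(a\sin(b\Theta))$ in the sphere directions, because the warping factor degenerates at the ends. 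For the Weitzenb\"oc integrand to stay non-negative, the potential $\Psi$ must satisfy $\Psi^2\sim\cot^2(b\Theta)$ near the ends, which is why the paper's perturbed potential $\psi_{\delta,\varepsilon}$ is $\tfrac{2a_1}{n}\cot(b\theta)$-type (unbounded), not $\cos$-type (bounded). With $\cos$ the pointwise estimate fails near the poles, and the resulting operator also has no reason to be Fredholm.

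There is also a structural mismatch worth noting: the paper's proof of Theorem~\ref{thm1} does not use a Callias-Fredholm operator directly on the open $M$. Instead it uses a compact exhaustion $M_\delta$ with APS/chirality boundary conditions, and (for even $n$) the auxiliary product $\tilde M=M\times\mathbb{S}^1$ with Kramer's map $h:\mathbb{S}^{n-1}\times\mathbb{S}^1\to\mathbb{S}^n$ to produce the twist bundle $\tilde f^*E_0^{\pm}$. That route is a different (and, with the right potential, also viable) technical setup from the one you sketch; but without correcting the potential to an unbounded $\cot$-type function, the estimate at the heart of the argument does not close.
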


As mentioned before, the Llarull theorem can be generalized to warped product metrics \( {d}t^2+\phi(t)^2 g_{\mathbb{S}^{n-1}}\) with $\phi$ satisfying the condition that $(\log\phi)''$ is strictly negative.
See for instance \cite[Theorem 1.1]{BBHW}. We intend to formulate a similar spectral result so that the limiting case recovers these results as well.
Let $\mu$ be a function on $M$, we define the number $\Lambda_{c,\mu}$ for any open manifold $(M,g)$
\begin{equation*}
  \Lambda_{c,\mu}=\inf\left\{
    \int_ M (|\n u|^2+cR_gu^2-\mu u^2)dV \Big | u\in H_0^1( M ),\int_ M  u^2dV=1
  \right\}.
\end{equation*}
Note that the condition $\Lambda_c \geq \Lambda$ we used before is equivalent to $\Lambda_{c,\Lambda} \geq 0$. In some sense, the condition $\Lambda_{c,\mu}\geq 0$ generalizes the pointwise scalar curvature bound by a function.

Let \(g_1= {d}t^2+\phi(t)^2 g_{\mathbb{S}^{n-1}}\) where the warping factor $\phi$ is a function defined on a closed interval $[0,t_0]$ ($t_0>0$) with the property $\phi(0)=\phi(t_0)=0$,
\[\lim_{t\to 0}\phi'(t) / \phi(t)= -\lim_{t\to t_{0}}{\phi'(t)} / {\phi(t)}=+\infty\]
and $(\log \phi)''<0$. As before, we fix the constant $c>\tfrac{1}{4}$ and $\beta_2$ given in \eqref{beta2 constants relation}.
Let \begin{equation}
  f =\tfrac{4n-(n-1)c^{-1}}{n(4-c^{-1})} \tfrac{\phi'}{\phi} \label{f in mu}
\end{equation}
and $\mu=\mu(t)$ be the function which is given by 
\begin{equation}
  \frac{c^{-1}\mu}{4}+\frac{n(n-1)}{4}\beta_2 f^2+\frac{n-1}{2}\beta_2f'-\frac{(n-2)(n-1)}{4}\phi^{-2}=0. \label{f mu relation}
\end{equation}
See Appendix \ref{App} for the fact that $v=\phi^{\tfrac{2}{4-c^{-1}}}$ satisfies the equation
\[
  -\Delta_{g_1} v + c R_{g_1}v=\mu v.
  \]
  
\begin{theorem}\label{thm1-mu}
  Let $( M ,g)$ be an $n$-dimensional, non-compact, connected spin manifold without boundary. Let $\Phi:( M ,g)\to ( (0,t_0)\times\mathbb{S}^{n-1},g_1)$ be a smooth map with the following properties:
\begin{itemize}
  \item $\Phi$ is proper;
  \item $\Phi$ has non-zero degree;
  \item $\Phi$ is $1$-Lipschitz.
\end{itemize}
Suppose $g$ is a Riemannian metric on $ M $ such that
  $\Lambda_{c,\mu\circ \Phi} (g) \geq \Lambda_{c,\mu}(g_1)=0$.
 Then $\Phi$ is a Riemannian isometry and ${\Lambda}_{c,\mu\circ\Phi}  =0$. 
\end{theorem}

Similar to the case when  $\mu=\Lambda$, we inspect the limiting case as $c\to \infty$. Let $\mu'$ be the scalar
curvature of some warped product $dt^2+\phi(t)^2 g_{\mathbb{S}^{n-1}}$. The lower bound $c^{-1}\Lambda_{c,\mu\circ\Phi}\ge 0$ reduces to the pointwise bound $R_g \ge \mu'\circ\Phi$
in the limit.

Our approach to Theorems \ref{thm1} and \ref{thm1-mu} is spinorial and uses ideas from articles \cite{BBHW,hirsch-spectral-2023-arxiv, WX}. In dimension 3, we are also able to give a proof using spacetime harmonic function which follows \cite{hirsch-spectral-2023-arxiv}. (See Theorem \ref{harmonic main}). The harmonic function was initiated by Stern \cite{stern-scalar-2022} and later generalized to the spacetime settings by \cite{hirsch-spacetime-2022}. It has been proven quite useful.
However, compared to the spinorial proof, other than the restriction in dimension 3, some extra technical conditions are also imposed which we believe are removable. The extra conditions are due to Lemma \ref{lower gradient bound}. 

\

The article is organized as follows:

In Section \ref{spin proof}, we present a proof of our main Theorem \ref{thm1} using spinors.
In Section \ref{sec:harmonic proof}, we prove Theorem \ref{thm1} in dimension 3 (see Theorem \ref{harmonic main}) using an
alternative approach based on spacetime harmonic functions.
At the end of Section \ref{spin proof}, we show that it is relatively
easy to adapt the proofs to obtain the slightly more general spectral rigidity result Theorem \ref{thm1-mu}.
In Appendix \ref{App}, we calculate explicitly the extremal function $u$ achieving the spectral constant $\Lambda:=\Lambda_c(g_0)$ and
constants $a$, $b$ and also verify the case for $\Lambda_{c,\mu}=0$.

\section{Spinor proof of Theorem \ref{thm1}}\label{spin proof}

In this section, we follow the method in \cite{BBHW,hirsch-spectral-2023-arxiv, WX} to prove Theorem \ref{thm1}. 
We introduce the function
\begin{equation}\label{f}
  f(\theta)=\frac{2a_1}{n}\cot(b\theta)
  \end{equation}
critical to our proof.
Since $$\beta_2 a_1^2-\beta_2 a_1 b-\frac{n(n-2)}{4a^2}=0$$ holds, we see that $f$ solves the following ordinary differential equation 
\begin{equation*}
  \frac{c^{-1}\Lambda}{4}+\frac{n(n-1)}{4}\beta_2 f^2+\frac{n-1}{2}\beta_2f'-\frac{(n-2)(n-1)}{4}\frac{1}{a^2\sin^2(b\theta)}=0.
\end{equation*}

Firstly, we consider the case where $n$ is even. 
Following \cite{BBHW}, we denote 
\begin{equation*}
  \Phi=(\varphi,\Theta), \text{ where } \varphi: M \to \mathbb{S}^{n-1},\text{ and }\Theta: M \to (0,\frac{\pi}{b}).
\end{equation*}
By assumption, $\Phi$ is $1$-Lipschitz, one has
\begin{equation*}
  g\geq d\Theta\otimes d\Theta+ a^2\sin^2(b\Theta)\varphi^*g_{\mathbb{S}^{n-1}},
\end{equation*}
from which it follows that 
\begin{equation}\label{eqn13}
  |\n\Theta|^2\geq |\n\Theta|^4+a^2\sin^2(b\Theta)|d\varphi(\n\Theta)|^2_{g_{\mathbb{S}^{n-1}}}\geq |\n\Theta|^4,
\end{equation}
and so $|\n\Theta|\leq 1$, the inequality is strict unless $d\varphi(\n\Theta)=0$, see \cite[Lemma 2.2]{BBHW}.

Let $z\in \mathbb{S}^{n-1}\times [\frac{\pi}{3b},\frac{2\pi}{3b}]$ be a regular value of $\Phi$. Then $\Phi^{-1}(\{z\})$ is a finite subset of $ M $. Fix a real number $\delta_0\in (0,\frac{\pi}{4b})$ such that $\Phi^{-1}(\{z\})$ is contained in a single connected component of $\Phi^{-1}(\mathbb{S}^{n-1}\times [\delta_0,\frac{\pi}{b}-\delta_0])$. Set
\begin{equation*}
  \Delta=\left\{\delta\in(0,\delta_0)\Big|\delta \text{ and }\frac{\pi}{b}-\delta \text{ are regular values of } \Theta: M \to (0,\frac{\pi}{b})\right\}.
\end{equation*}
Then $\Delta$ is an open and dense subset of $(0,\delta_0)$ by Sard's theorem. For any $\delta\in \Delta$, let $M_\delta$ be the connected component of $\Phi^{-1}(\mathbb{S}^{n-1}\times [\delta_0,\frac{\pi}{b}-\delta_0])$ that contains $\Phi^{-1}(\{z\})$. By the definition of $\Delta$ and $\Phi$ is proper, $M_\delta$ is a compact, connected manifold with boundary. Then $\p M_\delta=\p_+M_\delta\cup \p_-M_\delta$, where
\begin{equation*}
  \p_+M_\delta:=\p M_\delta\cap \Phi^{-1}(\mathbb{S}^{n-1}\times \{\frac{\pi}{b}-\delta\})\text{ and } \p_-M_\delta:=\p M_\delta\cap \Phi^{-1}(\mathbb{S}^{n-1}\times \{\delta\}).
\end{equation*}
Since $\Phi^{-1}(\{z\})\subset M_\delta$, the restriction $\Phi|_{M_\delta}\to \mathbb{S}^{n-1}\times [\delta,\frac{\pi}{b}-\delta]$ has the same degree as the map $\Phi: M \to \mathbb{S}^{n-1}\times (0,\frac{\pi}{b})$, see \cite[Lemma 2.2]{BBHW}. 
Set 
\begin{equation*}
  a_1=\frac{1}{2}\sqrt{\frac{\Lambda(4n-(n-1) c^{-1} )}{4c(n-1)-(n-2)  }}.
\end{equation*}
For any $\delta\in (0,\frac{\pi}{4b})$ and $\varepsilon\in (0,\delta)$, there is a smooth function $\psi_{\delta,\varepsilon}:[\delta,\frac{\pi}{b}-\delta]\to \mathbb{R}$ as a perturbation of $f$ introduced earlier in \eqref{f} and a constant $K>0$ (independent of $\delta,\varepsilon$)
satisfying the following properties:
\begin{itemize}
  \item $\psi_{\delta,\varepsilon}(\theta)=\frac{2a_1}{n}\frac{\cos (b(\theta-\delta+\varepsilon))}{\sin(b(\theta-\delta+\varepsilon))}$ for $\theta\in [\delta,\frac{\pi}{3b}]$;
  \item $\psi_{\delta,\varepsilon}(\theta)=\frac{2a_1}{n}\frac{\cos(b(\theta+\delta-\varepsilon))}{\sin(b(\theta+\delta-\varepsilon))}$ for $\theta\in [\frac{2\pi}{3b},\frac{\pi}{b}-\delta]$;
  \item $\left|\psi_{\delta,\varepsilon}(\theta)-\frac{2a_1}{n}\frac{\cos(b\theta)}{\sin(b\theta)}\right|\leq K\delta$ for $\theta\in [\frac{\pi}{3b},\frac{2\pi}{3b}]$;
  \item $\left|\psi'_{\delta,\varepsilon}(\theta)+\frac{2a_1}{n}\frac{b}{\sin^2(b\theta)}\right|\leq K\delta$ for $\theta\in [\frac{\pi}{3b},\frac{2\pi}{3b}]$.
\end{itemize}
One can refer to \cite{BBHW, hirsch-rigid-2022-arxiv} for the existence of such smooth functions. Denote 
$$\Psi_{\delta,\varepsilon}=\psi_{\delta,\varepsilon}\circ \Theta:M_\delta\to \mathbb{R}.$$

Now we consider the following smooth map 
\begin{equation*}
  h:\mathbb{S}^{n-1}\times \mb{S}^1\to \mb{S}^n,\quad h(x,t)=(-\cos(t)x,-\sin(t)),\,\,t\in [0,2\pi].
\end{equation*}
\begin{proposition}\label{prop2.1}
The map $h:(\mathbb{S}^{n-1}\times \mb{S}^1,g_{\mathbb{S}^{n-1}}+dt\otimes dt)\to (\mb{S}^n,g_{\mb{S}^n})$ satisfies:
	\begin{itemize}
  \item $h:\mathbb{S}^{n-1}\times\{\pi\}\to \mb{S}^n$ is an isometric embedding;
  \item If $v\in T_x\mathbb{S}^{n-1}$ and $w\in T_{\pi}\mb{S}^1$, then $h_*(v)\perp h_*(w)$ at the point $h(x,\pi)\in \mb{S}^n$;
  \item Let $\Gamma:\mb{S}^1\to \mb{S}^1$ be the isometry that reverses the orientation and $\Gamma(0)=0$, $\Gamma(\pi)=\pi$, where we have parameterized $\mb{S}^1$ by $[0,2\pi]$. We have the following commuting diagram
  \begin{equation*}
  \begin{tikzcd}
\mathbb{S}^{n-1}\times \mb{S}^1 \arrow[r, "\mathrm{id}\times \Gamma"] \arrow[d, "h"] & \mathbb{S}^{n-1}\times \mb{S}^1 \arrow[d, "h"] \\
\mb{S}^n \arrow[r, "\Gamma_n"]                                       & \mb{S}^n                             
\end{tikzcd}
\end{equation*}
 where $\Gamma_n:\mb{S}^n\to \mb{S}^n$ is the isometry of $\mb{S}^n$ obtained by the restriction to $\mb{S}^n$ of the reflection on $\mathbb{R}^{n+1}$ along the hyperplane $x_{n+1}=0$.
\end{itemize}
\end{proposition}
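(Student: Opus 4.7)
The plan is a direct coordinate-by-coordinate verification, since each of the three bullets is essentially a computation with the explicit formula $h(x,t)=(-\cos(t)x,-\sin(t))$, viewed as a map into the unit sphere $\mathbb{S}^n \subset \mathbb{R}^{n+1}$ with the induced round metric. I will first check that $h$ genuinely lands in $\mathbb{S}^n$, i.e.\ that $\cos^2(t)|x|^2 + \sin^2(t) = 1$ when $|x|=1$, which is immediate.

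For the first bullet, I will specialize to $t=\pi$: $h(x,\pi) = (x,0)$, so $h(\mathbb{S}^{n-1}\times\{\pi\})$ is exactly the equatorial $\mathbb{S}^{n-1}\subset \mathbb{S}^n$. The differential $h_*(v) = (v,0)$ for $v\in T_x\mathbb{S}^{n-1}$, so $|h_*(v)|_{\mathbb{R}^{n+1}} = |v|$, which gives the isometric embedding claim.

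For the second bullet, I compute $h_*(\partial_t)$ at $t=\pi$: differentiating $h(x,t)$ in $t$ gives $(\sin(t)x,-\cos(t))$, which at $t=\pi$ equals $(0,1)\in \mathbb{R}^{n+1}$. Combined with $h_*(v) = (v,0)$ from above, orthogonality in $\mathbb{R}^{n+1}$ (hence in $T_{h(x,\pi)}\mathbb{S}^n$) is clear.

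For the third bullet, I first identify $\Gamma$ on $\mathbb{S}^1$: the orientation-reversing isometry fixing $0$ and $\pi$ is $\Gamma(t) = -t \bmod 2\pi$. Then I compute
\begin{equation*}
h(x,\Gamma(t)) = (-\cos(-t)x,-\sin(-t)) = (-\cos(t)x,\sin(t)),
\end{equation*}
while $\Gamma \circ h(x,t) = \Gamma(-\cos(t)x,-\sin(t)) = (-\cos(t)x,\sin(t))$ since $\Gamma$ on $\mathbb{S}^n$ negates the last coordinate. These agree, so the diagram commutes. I expect no real obstacle here; everything is a one-line check using $\cos$ being even and $\sin$ being odd.
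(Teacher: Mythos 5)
Your proof is correct and takes essentially the same direct-verification approach as the paper. The only cosmetic difference is that for the first two bullets the paper first computes the full pullback metric $h^*g_{\mb{S}^n}=(dt)^2+\cos^2(t)\,g_{\mathbb{S}^{n-1}}$ (which it then reuses later in the argument, cf.\ \eqref{eqn16}) and reads off both claims from that product form, whereas you compute $h_*$ pointwise at $t=\pi$; both are one-line checks, and your treatment of the third bullet is identical to the paper's with $\Gamma(t)=-t\equiv 2\pi-t$.
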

\begin{proof}
\begin{itemize}
\item	The pull-back metric $h^*g_{\mb{S}^n}$ satisfies 
\begin{align}\begin{split}\label{eqn2}
\begin{split}
    h^*g_{\mb{S}^n}&=\sum_{i=1}^{n}(d(\cos(t)x^i))^2+(d\sin(t))^2\\
    &=(dt)^2+\cos^2(t)\sum_{i=1}^{n}(dx^i)^2\\
    &=(dt)^2+\cos^2(t) \cdot g_{\mathbb{S}^{n-1}}.
 \end{split}
\end{split}\end{align}
In particular, for $t=\pi$, $h^*g_{\mb{S}^n}=g_{\mathbb{S}^{n-1}}$. Since $h(x,\pi)=(x,0)$, $h:\mathbb{S}^{n-1}\times\{\pi\}\to \mb{S}^n$ is an isometric embedding.
\item For any $v\in T_x\mathbb{S}^{n-1}$ and $w\in T_\pi \mb{S}^1$, one has
\begin{align*}
\begin{split}
  g_{\mb{S}^n}(h_*(v),h_*(w))&=h^*g_{\mb{S}^n}(v,w)=((dt)^2+\cos^2(t) \cdot g_{\mathbb{S}^{n-1}})(v,w)=0,
 \end{split}
\end{align*}
from which it follows that $h_\ast(v)\bot h_*(w)$ at the point $h(x,\pi)\in \mathbb{S}^n$.
\item The isometry $\Gamma:\mb{S}^1\to \mb{S}^1$ is given by $\Gamma(t)=2\pi-t$, and $\Gamma_n:\mb{S}^n\to \mb{S}^n$ is given by $\Gamma_n(x,y)=(x,-y)$, where $(x,y)\in \mb{S}^n$ and $x\in \mathbb{R}^n$, $y\in \mathbb{R}$. Hence 
\begin{align*}
\begin{split}
  \Gamma_n\circ h(x,t)&=\Gamma_n(-\cos(t)x,-\sin(t))\\
  &=(-\cos(t)x,\sin(t))\\
  &=h(x,2\pi-t)\\
  &=h\circ (\mathrm{id}\times \Gamma)(x,t).
 \end{split}
\end{align*}
\end{itemize}
The proof is complete.
\end{proof}
\begin{remark}
In \cite[Lemma A.1]{Kra}, W. Kramer proved the existence of the map $h:\mathbb{S}^{n-1}\times \mb{S}^1\to \mb{S}^n$ that satisfies the properties in the above proposition. 
\end{remark}

We consider the product $\tilde{ M }= M \times \mb{S}^1$ equipped with the product metric $\tilde{g}=g+r^2 g_{\mb{S}^1}$, where $r>0$ is a real parameter to be determined, and we will make $r$ sufficiently large later on. Denote $\tilde{M}_\delta=M_\delta\times \mb{S}^1\subset \tilde{ M }$. We write $\p\tilde{M}_\delta=\p_+\tilde{M}_\delta\cup \p_-\tilde{M}_\delta$, where
\begin{equation*}
  \p_+\tilde{M}_\delta=\p_+M_\delta\times \mb{S}^1,\quad \p_-\tilde{M}_\delta=\p_-M_\delta\times \mb{S}^1.
\end{equation*}	
We define a smooth map $\tilde{f}:\tilde{ M }= M \times \mb{S}^1\to \mb{S}^n$ by 
\begin{equation*}
  \tilde{f}(x,t)=h(\varphi(x),t)
\end{equation*}
for any $x\in  M $ and $t\in \mb{S}^1$. 

Given a spin structure for \(  M  \) and let \( S \) denote its associated spinor bundle. For \( \tilde{ M } =  M  \times \mb{S}^1 \), where \( \mb{S}^1 \) is equipped with a trivial spin structure, let \( \tilde{S} \) denote the spinor bundle of $\tilde{ M }$. Thus, \( \tilde{S} \) is  the pull-back of \( S \) through the canonical projection mapping \( \tilde{ M } =  M  \times \mb{S}^1 \) onto \(  M  \).
Let $E_0$ denote the spinor bundle
over the round sphere $\mb{S}^n$, equipped with a natural metric and connection.
Since $n$ is even, we may decompose $E_0$ as $E_0=E_0^+\oplus E_0^-$, where $E_0^+$ and $E_0^-$ are the eigenbundles of the volume form.

For each $\delta\in \Delta$, let $\nu$ be the outward unit normal vector and consider the indices of the following operators:
\begin{itemize}
  \item  Let $\mathrm{ind}_1$ denote the index of the Dirac operator on $\tilde{S} \otimes \tilde{f}^* E_0^{+}$ with boundary conditions $u=-i \nu \cdot u$ on $\partial_{+} \tilde{M}_\delta$ and $u=i \nu \cdot u$ on $\partial_{-} \tilde{M}_\delta$;
\item  Let $\mathrm{ind}_2$ denote the index of the Dirac operator on $\tilde{S} \otimes \tilde{f}^* E_0^{+}$ with boundary conditions $u=i \nu \cdot u$ on $\partial_{+} \tilde{M}_\delta$ and $u=-i \nu \cdot u$ on $\partial_{-} \tilde{M}_\delta$;
\item Let $\mathrm{ind}_3$ denote the index of the Dirac operator on $\tilde{S} \otimes \tilde{f}^* E_0^{-}$ with boundary conditions $u=-i \nu \cdot u$ on $\partial_{+} \tilde{M}_\delta$ and $u=i \nu \cdot u$ on $\partial_{-} \tilde{M}_\delta$;
\item Let $\mathrm{ind}_4$ denote the index of the Dirac operator on $\tilde{S} \otimes \tilde{f}^* E_0^{-}$ with boundary conditions $u=i \nu \cdot u$ on $\partial_{+} \tilde{M}_\delta$ and $u=-i \nu \cdot u$ on $\partial_{-} \tilde{M}_\delta$.
\end{itemize}
 Denote
\begin{equation*}
  \Delta_i=\{\delta\in\Delta:\mathrm{ind}_i>0\},\quad 1\leq i\leq 4.
\end{equation*}
By \cite[Proposition 2.4]{BBHW}, $\Delta=\Delta_1\cup \Delta_2\cup\Delta_3\cup\Delta_4$. Hence, one of the sets \( \Delta_i,1\leq i\leq 4 \) must contain 0 within its closure. By interchanging the bundles \( E_0^{+} \) and \( E_0^{-} \) as necessary, we can infer that either \( \Delta_1 \) or \( \Delta_2 \) contains 0 in its closure. For our subsequent discussions, we assume that \( \Delta_1 \) contains $0$ in its closure. (The case when the set $\Delta_2$ contains $0$ in its closure can be handled analogously.)

In the following, we assume that $\delta\in \Delta_1$. Denote $\tilde{E}=\tilde{f}^*E_0^+$. Let $\n^{\tilde{S}\otimes\tilde{E}}$ denote the natural connection induced from $\tilde{S}$ and $\tilde{E}$. Let $ \mc{D}^{\tilde{S}\otimes\tilde{E}}$ denote the Dirac operator acting on the sections of $\tilde{S}\otimes\tilde{E}$,
\begin{equation*}
  \mc{D}^{\tilde{S}\otimes\tilde{E}}u=\sum_{k=1}^{n+1}e_k\cdot\n^{\tilde{S}\otimes \tilde{E}}_{e_k}u,
\end{equation*}
where $\{e_1,\ldots,e_{n+1}\}$ is a local orthonormal frame on $\tilde{ M }$. Since $\delta\in\Delta_1$, we know that $\mathrm{ind}_1>0$. In view of the deformation invariance of the index, the index of the operator $\mc{D}^{\tilde{S}\otimes\tilde{E}}-\frac{in}{2}\Psi_{\delta,\varepsilon}$ with the same boundary conditions equals to $\mathrm{ind}_1$, and so its index is also positive. Hence the kernel of the operator $\mc{D}^{\tilde{S}\otimes\tilde{E}}-\frac{in}{2}\Psi_{\delta,\varepsilon}$ is not empty, and so
we can find a section $u\in C^\infty(\tilde{M}_\delta,\tilde{S}\otimes \tilde{E})$ such that
\begin{itemize}
  \item $u$ does not vanish identically;
  \item $\mc{D}^{\tilde{S}\otimes\tilde{E}}u-\frac{in}{2}\Psi_{\delta,\varepsilon}u=0$ on $\tilde{M}_\delta$;
  \item $u=-i\nu\cdot u$ on $\p_+\tilde{M}_\delta$ and $u=i\nu\cdot u$ on $\p_-\tilde{M}_\delta$.
\end{itemize}
The spinor bundles $\tilde{S}\otimes \tilde{E}$ and $(\mathrm{id}\times \Gamma)^*(\tilde{S}\otimes \tilde{E})$ can be canonically identified, where $\mathrm{id}\times \Gamma:M_\delta\times \mb{S}^1\to M_\delta\times \mb{S}^1$, see  \cite[Page 599]{Kra} and \cite[Page 24]{WX}. In fact, by the commutative diagram in Proposition \ref{prop2.1}, one has
\begin{equation}
    (\mathrm{id}\times\Gamma)^*(T^*(M_\delta\times \mb{S}^1)\oplus \tilde{f}^*T^*\mathbb{S}^n)=T^*M_\delta\oplus \Gamma^*T^*\mb{S}^1\oplus \tilde{f}^*\Gamma_n^*T^*\mathbb{S}^n,
\end{equation}
It follows that 
$$(\mathrm{id}\times\Gamma)^*=(\mathrm{id},\Gamma^*,\Gamma^*_n):T^*(M_\delta\times \mb{S}^1)\oplus \tilde{f}^*T^*\mathbb{S}^n\to  T^*(M_\delta\times \mb{S}^1)\oplus \tilde{f}^*T^*\mathbb{S}^n$$ 
is an isometry and preserves the orientation of $T^*(M_\delta\times \mb{S}^1)\oplus \tilde{f}^*T^*\mathbb{S}^n$. This map induces an identification between the spinor bundles $\tilde{S}\otimes\tilde{f}^*E_0$ and $(\mathrm{id}\times\Gamma)^*(\tilde{S}\otimes\tilde{f}^*E_0)$ because the spinor bundle associated with the cotangent bundle $T^*(M_\delta\times \mb{S}^1)\oplus \tilde{f}^*T^*\mathbb{S}^n$ is the bundle $\tilde{S}\otimes\tilde{f}^*E_0$. In particular, the two spinor subbundles  $\tilde{S}\otimes\tilde{E}$ and $(\mathrm{id}\times\Gamma)^*(\tilde{S}\otimes\tilde{E})$ can also be identified.

Consider the following section
\begin{equation*}
  \tilde{u}=u+(\mathrm{id}\times\Gamma)^*u\in C^\infty(\tilde{M}_\delta,\tilde{S}\otimes \tilde{E}).
\end{equation*}
Following \cite{Kra,WX}, one can prove that the section $\tilde{u}$ satisfies
\begin{itemize}
  \item $\tilde{u}$ does not vanish identically on $M_\delta\times \{\pi\}$;
  \item $\mc{D}^{\tilde{S}\otimes\tilde{E}}\tilde{u}-\frac{in}{2}\Psi_{\delta,\varepsilon}\tilde{u}=0$ on $\tilde{M}_\delta$;
  \item $\tilde{u}=-i\nu\cdot \tilde{u}$ on $\p_+\tilde{M}\times \{\pi\}$ and $\tilde{u}=i\nu\cdot \tilde{u}$ on $\p_-\tilde{M}\times \{\pi\}$;
  \item $(\n_{e_{n+1}}^{\tilde{S}\otimes \tilde{E}}\tilde{u})(\cdot,\pi)=0$, where $e_{n+1}=\frac{1}{r}\frac{\p}{\p t}$ is the unit vector on $M_\delta\times \{\pi\}$ tangent to $\mb{S}^1$.
\end{itemize}
For any vector field $X$ on $\tilde{M}_\delta$, denote
\begin{equation*}
  \tilde{P}_X\tilde{u}=\n^{\tilde{S}\otimes\tilde{E}}_{ X-\left\langle X,e_{n+1}\right\rangle e_{n+1}}\tilde{u}+\frac{i}{2}\Psi_{\delta,\varepsilon}(X-\left\langle X,e_{n+1}\right\rangle e_{n+1})\cdot \tilde{u}.
\end{equation*}
Using the same proof as \cite[Proposition 2.5 and (16)]{BBHW}, we obtain
\begin{align}\begin{split}\label{eqn1}
\begin{split}
  &\quad \int_{{M}_\delta\times\{\pi\}}|\tilde{P} \tilde{u}|^2+\frac{1}{4} \int_{{M}_\delta\times\{\pi\}} R|\tilde{u}|^2+\int_{{M}_\delta\times\{\pi\}}\left\langle\mathcal{R}^{\tilde{E}} \tilde{u}, \tilde{u}\right\rangle \\
& \quad\quad +\frac{n(n-1)}{4} \int_{{M}_\delta\times\{\pi\}} \Psi_{\delta, \varepsilon}^2|\tilde{u}|^2-\frac{i(n-1)}{2} \int_{{M}_\delta\times\{\pi\}}\left\langle\left(\nabla \Psi_{\delta, \varepsilon}\right) \cdot \tilde{u}, \tilde{u}\right\rangle \\
& =-\frac{1}{2} \int_{\partial_{+} {M}_\delta\times\{\pi\}}\left(H-(n-1) \Psi_{\delta, \varepsilon}\right)|\tilde{u}|^2-\frac{1}{2} \int_{\partial_{-} {M}_\delta\times\{\pi\}}\left(H+(n-1) \Psi_{\delta, \varepsilon}\right)|\tilde{u}|^2,
 \end{split}
\end{split}\end{align}
where $H$ denotes the mean curvature of $\p M_\delta$, defined as the sum of the principal curvatures. 

When restricted to $M_\delta\times\{\pi\}$, one obtains
\begin{equation*}
   \sum_{k=1}^{n}e_k\cdot\n^{\tilde{S}\otimes \tilde{E}}_{e_k}\tilde{u}-\frac{in}{2}\Psi_{\delta,\varepsilon}\tilde{u}=\left(\mc{D}^{\tilde{S}\otimes\tilde{E}}\tilde{u}-\frac{in}{2}\Psi_{\delta,\varepsilon}\tilde{u}\right)-e_{n+1}\cdot\n^{\tilde{S}\otimes \tilde{E}}_{e_{n+1}}\tilde{u}=0.
\end{equation*}
Next, we follow the method in \cite[Section 3.2]{hirsch-spectral-2023-arxiv}.  Denote 
\begin{equation*}
  v_i=e_i\cdot {\n}_{{e}_i}^{\tilde{S}\otimes \tilde{E}}\tilde{u}-\frac{i}{2}\Psi_{\delta,\varepsilon}\tilde{u}, \quad 1\leq i\leq n.
\end{equation*}
Then $\sum_{i=1}^n v_i=0$ on $M_\delta\times\{\pi\}$. Let $v=(v_1,\cdots,v_n)$, the Cauchy-Schwarz inequality gives 
 \begin{equation*}
  |\tilde{P}\tilde{u}|^2=\sum_{i=1}^n\left| {\n}_{{e}_i}^{\tilde{S}\otimes \tilde{E}}\tilde{u}+\frac{i}{2}\Psi_{\delta,\varepsilon}e_i\cdot\tilde{u}\right|^2=|v|^2\geq \frac{n}{n-1}|{\n}_{{e}_1}^{\tilde{S}\otimes \tilde{E}}\tilde{u}+\frac{i}{2}\Psi_{\delta,\varepsilon}e_1\cdot\tilde{u}|^2.
\end{equation*}
Denote $\beta=\frac{n}{n-1}-\frac{ c^{-1} }{4}>0$, it follows that
\begin{align}\begin{split}\label{eqn7}
\begin{split}
  |\tilde{P}\tilde{u}|^2 &\geq \frac{n}{n-1}| {\n}_{{e}_1}^{\tilde{S}\otimes \tilde{E}}\tilde{u}|^2+\frac{n}{(n-1)}\left\langle {\n}_{{e}_1}^{\tilde{S}\otimes \tilde{E}}\tilde{u},\frac{i}{2}\Psi_{\delta,\varepsilon}e_1\cdot\tilde{u}\right\rangle\\
  &\quad+\frac{n}{n-1}\left\langle \frac{i}{2}\Psi_{\delta,\varepsilon}e_1\cdot\tilde{u},{\n}_{{e}_1}^{\tilde{S}\otimes \tilde{E}}\tilde{u}\right\rangle+\frac{n}{4(n-1)}|\Psi_{\delta,\varepsilon}|^2|\tilde{u}|^2\\
  &=\frac{ c^{-1} }{4}| {\n}_{{e}_1}^{\tilde{S}\otimes \tilde{E}}\tilde{u}|^2+\beta\left|{\n}_{{e}_1}^{\tilde{S}\otimes \tilde{E}}\tilde{u}+\frac{n}{\beta(n-1)}\frac{i}{2}\Psi_{\delta,\varepsilon}e_1\cdot\tilde{u}\right|^2\\
  &\quad+\frac{n^2}{4}\underbrace{\left(\frac{1}{n(n-1)}-\frac{1}{\beta(n-1)^2}\right)}_{\beta_1}|\Psi_{\delta,\varepsilon}|^2|\tilde{u}|^2.\\
 \end{split}
\end{split}\end{align}
Let $e$ be a tangent vector defined by 
\begin{equation*}
  e= \begin{cases}
 	\frac{\n |\tilde{u}|}{|\n |\tilde{u}||}& \text{ for }\n|\tilde{u}|\neq 0, \\
 	0&\text{ for }\n|\tilde{u}|= 0.
 \end{cases}
\end{equation*}
Denote 
\begin{equation*}
  \mc{Q}_{e}\tilde{u}={\n}_{{e}}^{\tilde{S}\otimes \tilde{E}}\tilde{u}+\frac{n}{\beta(n-1)}\frac{i}{2}\Psi_{\delta,\varepsilon}e\cdot\tilde{u}.
\end{equation*}
From \eqref{eqn7}, the following inequality
\begin{equation}\label{eqn3}
 |\tilde{P}\tilde{u}|^2-\left( \frac{ c^{-1} }{4}|\n|\tilde{u}||^2+\frac{n^2\beta_1}{4}|\Psi_{\delta,\varepsilon}|^2|\tilde{u}|^2\right)\geq \beta\left|\mc{Q}_{e}\tilde{u}\right|^2
 \end{equation}
holds on $M_\delta\times\{\pi\}$. 
Using the divergence theorem, we obtain
\begin{align}\begin{split}\label{eqn4}
\begin{split}
 &\quad n\int_{M_\delta\times \{\pi\}}|\Psi_{\delta,\varepsilon}|^2|\tilde{u}|^2-i\int_{M_\delta\times \{\pi\}}\left\langle(\n \Psi_{\delta,\varepsilon})\cdot\tilde{u},\tilde{u}\right\rangle\\
 &=-i\int_{M_\delta\times\{\pi\}}\Psi_{\delta,\varepsilon}\left\langle \mc{D}^{\tilde{S}\otimes\tilde{E}}\tilde{u},\tilde{u}\right\rangle+i\int_{M_\delta\times\{\pi\}}\Psi_{\delta,\varepsilon}\left\langle \tilde{u},\mc{D}^{\tilde{S}\otimes\tilde{E}}\tilde{u}\right\rangle\\
 &\quad -i\int_{M_\delta\times \{\pi\}}\left\langle(\n \Psi_{\delta,\varepsilon})\cdot\tilde{u},\tilde{u}\right\rangle\\
  &=-i\int_{\p{M}_\delta\times \{\pi\}}\Psi_{\delta,\varepsilon}\left\langle\nu\cdot \tilde{u},\tilde{u}\right\rangle\\
  &=\int_{\p_+M_\delta\times \{\pi\}}\Psi_{\delta,\varepsilon}|\tilde{u}|^2-\int_{\p_-M_\delta\times \{\pi\}}\Psi_{\delta,\varepsilon}|\tilde{u}|^2.
  \end{split}
\end{split}\end{align}
Substituting \eqref{eqn4} into \eqref{eqn1}, one obtains
\begin{align}\begin{split}\label{eqn5}
\begin{split}
  & \quad \int_{{M}_\delta\times\{\pi\}}\beta\left|\mc{Q}_{e}\tilde{u}\right|^2+\int_{{M}_\delta\times\{\pi\}}\frac{1}{4}( c^{-1} |\n|\tilde{u}||^2+R|\tilde{u}|^2)\\
& \quad +\int_{{M}_\delta\times\{\pi\}}\left\langle\mathcal{R}^{\tilde{E}} \tilde{u}, \tilde{u}\right\rangle +\left(\frac{n(n-1)}{4} -\frac{n^2\beta_1}{4}\right)\int_{{M}_\delta\times\{\pi\}} \Psi_{\delta, \varepsilon}^2|\tilde{u}|^2\\
&\quad -\left(\frac{i(n-1)}{2} -\frac{in\beta_1}{2}\right)\int_{{M}_\delta\times\{\pi\}}\left\langle\left(\nabla \Psi_{\delta, \varepsilon}\right) \cdot \tilde{u}, \tilde{u}\right\rangle \\
& \leq -\frac{1}{2} \int_{\partial_{+} {M}_\delta\times\{\pi\}}\left(H-((n-1)-{n\beta_1}) \Psi_{\delta, \varepsilon}\right)|\tilde{u}|^2\\
&\quad -\frac{1}{2} \int_{\partial_{-} {M}_\delta\times\{\pi\}}\left(H+((n-1)-n\beta_1)\Psi_{\delta, \varepsilon}\right)|\tilde{u}|^2.
 \end{split}
\end{split}\end{align}
By the definition of spectral constant, we have
\begin{equation*}
  \int_{{M}_\delta\times\{\pi\}} ( c^{-1} |\nabla|\tilde{u}||^2+R|\tilde{u}|^2) \geq c^{-1}\Lambda_{c} \int_{M_\delta\times\{\pi\}}|\tilde{u}|^2. 
\end{equation*}
Denote $\beta_2=1-\frac{n\beta_1}{n-1}>0$. Then \eqref{eqn5} gives
\begin{align*}
&\quad \int_{{M}_\delta\times\{\pi\}}\beta\left|\mc{Q}_{e}\tilde{u}\right|^2+\int_{M_\delta\times\{\pi\}}\left\langle \mc{R}^{\tilde{E}}\tilde{u},\tilde{u}\right\rangle\\
	&\quad +\int_{M_\delta\times\{\pi\}}\left(\frac{c^{-1}{\Lambda}_ c }{4}+\frac{n(n-1)}{4}\beta_2\Psi_{\delta,\epsilon}^2-\frac{n-1}{2}\beta_2|\n\Psi_{\delta,\epsilon}|\right)|\tilde{u}|^2\\
	&\leq -\frac{1}{2} \int_{\partial_{+} {M}_\delta\times\{\pi\}}\left(H-(n-1)\beta_2\Psi_{\delta, \varepsilon}\right)|\tilde{u}|^2\\
	&\quad -\frac{1}{2} \int_{\partial_{-} {M}_\delta\times\{\pi\}}\left(H+(n-1)\beta_2\Psi_{\delta, \varepsilon}\right)|\tilde{u}|^2.
\end{align*}
Recall that $\Psi_{\delta,\varepsilon}=\psi_{\delta,\varepsilon}(\Theta)$, by the choice of $\psi_{\delta,\varepsilon}$, one has
\begin{equation*}
  H-(n-1)\beta_2\Psi_{\delta,\varepsilon}=H-(n-1)\beta_2\frac{\cos\varepsilon}{\sin\varepsilon}>0
\end{equation*}
on $\p_+\tilde{M}_\delta$ and 
\begin{equation*}
  H+(n-1)\beta_2\Psi_{\delta,\varepsilon}=H+(n-1)\beta_2\frac{\cos\varepsilon}{\sin\varepsilon}>0
\end{equation*}
on $\p_-\tilde{M}_\delta$. Hence 
\begin{align}\begin{split}\label{eqn8}
\begin{split}
&\quad \int_{{M}_\delta\times\{\pi\}}\beta\left|\mc{Q}_{e}\tilde{u}\right|^2+\int_{M_\delta\times\{\pi\}}\left\langle \mc{R}^{\tilde{E}}\tilde{u},\tilde{u}\right\rangle\\
	&\quad+\int_{M_\delta\times\{\pi\}}\left(\frac{c^{-1} \Lambda_c }{4}+\frac{n(n-1)}{4}\beta_2\Psi_{\delta,\epsilon}^2-\frac{n-1}{2}\beta_2|\n\Psi_{\delta,\epsilon}|\right)|\tilde{u}|^2\leq 0.
	\end{split}
\end{split}\end{align}

Denote by $\mu_1^2,\cdots,\mu_n^2,\mu_{n+1}^2$ ($\mu_{n+1}=0$) the eigenvalues of the metric $\tilde{f}^*g_{\mb{S}^n}$ with respect to the metric $\tilde{g}=g+r^2 g_{\mb{S}^1}$, $\mu_1\geq\mu_2\geq \cdots\geq \mu_n\geq \mu_{n+1}=0$. Using \eqref{eqn2}, one has
\begin{align}\begin{split}\label{eqn16}
\begin{split}
  \tilde{f}^*g_{\mb{S}^n}&=(\varphi\times \mathrm{id})^*h^*g_{\mb{S}^n}\\
  &=(\varphi\times \mathrm{id})^*(\cos^2(t)g_{\mathbb{S}^{n-1}}+(dt)^2)\\
  &=\cos^2(t)\varphi^*g_{\mathbb{S}^{n-1}}+(dt)^2\\
  &\leq \varphi^*g_{\mathbb{S}^{n-1}}+g_{\mb{S}^1}.
 \end{split}
\end{split}\end{align}
On the other hand, 
$$\tilde{g}=g+r^2 g_{\mb{S}^1}\geq a^2\sin^2(b\Theta)\varphi^* g_{\mathbb{S}^{n-1}}+r^2g_{\mb{S}^1}.$$
By taking $r\geq a\sin(b\Theta)$, we obtain 
\begin{equation}\label{eqn15}
  \mu_1,\cdots,\mu_{n-1}\leq \frac{1}{a\sin(b\Theta)},\quad \mu_n= \frac{1}{r}.
\end{equation}
 By using \cite[(4.6)]{llarull-sharp-1998}, see also \cite[Proposition A.1]{BBHW}, one has
 \begin{align}\begin{split}\label{eqn9}
\begin{split}
    \left\langle \mc{R}^{\tilde{E}}\tilde{u},\tilde{u}\right\rangle&\geq -\frac{1}{4}\sum_{1\leq k,\ell,k\neq\ell}\mu_k\mu_l|\tilde{u}|^2\\
    &\geq -\frac{(n-2)(n-1)}{4}\frac{1}{a^2\sin^2(b\Theta)}|\tilde{u}|^2-\frac{1}{r}\cdot\frac{n-1}{2a\sin(b\Theta)}|\tilde{u}|^2.
 \end{split}
\end{split}\end{align}
Moreover, since $|\n\Theta|\leq 1$, we obtain
\begin{equation}\label{eqn10}
 |\n\Psi_{\delta,\varepsilon}|\leq  \begin{cases}
 \frac{2a_1b}{n}\frac{1}{\sin^2(b(\Theta-\delta+\varepsilon))}
	&\text{for } \Theta\in [\delta,\frac{\pi}{3b}], \\
  \frac{2a_1b}{n}\frac{1}{\sin^2(b(\Theta+\delta-\varepsilon))}&\text{for }\Theta\in [\frac{2\pi}{3b},\frac{\pi}{b}-\delta],\\
 \frac{2a_1b}{n}\frac{1}{\sin^2(b\Theta)}+K\delta &\text{for }\Theta\in [\frac{\pi}{3b},\frac{2\pi}{3b}].
 \end{cases}
\end{equation}
By using \eqref{eqn9} and \eqref{eqn10}, for $\Theta\in [\delta,\frac{\pi}{3b}]$, one has
\begin{align*}
\begin{split}
& \quad \left(\frac{c^{-1}\Lambda}{4}+\frac{n(n-1)}{4}\beta_2\Psi_{\delta,\epsilon}^2-\frac{n-1}{2}\beta_2|\n\Psi_{\delta,\epsilon}|\right)|\tilde{u}|^2+\left\langle \mc{R}^{\tilde{E}}\tilde{u},\tilde{u}\right\rangle\\
&\geq \left(\frac{c^{-1}\Lambda}{4}+\frac{n(n-1)}{4}\beta_2\frac{4a_1^2}{n^2}\frac{\cos^2(b(\Theta-\delta+\varepsilon))}{\sin^2(b(\Theta-\delta+\varepsilon))}\right.\\
&\quad \left.-\frac{n-1}{2}\beta_2\frac{2a_1b}{n}\frac{1}{\sin^2(b(\Theta-\delta+\varepsilon))}\right)|\tilde{u}|^2\\
&\quad  -\frac{(n-2)(n-1)}{4}\frac{1}{a^2\sin^2(b\Theta)}|\tilde{u}|^2-\frac{n-1}{2ra\sin(b\Theta)}|\tilde{u}|^2\\
&\geq-\frac{n-1}{2ra\sin (b\Theta)}|\tilde{u}|^2+ \left[\left(\frac{c^{-1}\Lambda}{4}-\frac{(n-1)\beta_2a_1^2}{n}\right)\right.\\
&\left.+\frac{n-1}{n}\left(\beta_2a_1^2-\beta_2a_1b-\frac{n(n-2)}{4a^2}\right)\frac{1}{\sin^2(b(\Theta-\delta+\varepsilon))}\right]|\tilde{u}|^2\\
&\quad \\
&= -\frac{n-1}{2ra\sin (b\Theta)}|\tilde{u}|^2,
 \end{split}
\end{align*}
where the last equality holds by the choice of $a,b$ such that
\begin{equation*}
  \frac{c^{-1}\Lambda}{4}-\frac{(n-1)\beta_2a_1^2}{n}=0=\beta_2a_1^2-\beta_2a_1b-\frac{n(n-2)}{4a^2}.
\end{equation*}
Similarly,
 \begin{align*}
\begin{split}
   &\quad \left(\frac{c^{-1}\Lambda}{4}+\frac{n(n-1)}{4}\beta_2\Psi_{\delta,\epsilon}^2-\frac{n-1}{2}\beta_2|\n\Psi_{\delta,\epsilon}|\right)|\tilde{u}|^2+\left\langle \mc{R}^{\tilde{E}}\tilde{u},\tilde{u}\right\rangle\\
   &\geq -\frac{n-1}{2ra\sin (b\Theta)}|\tilde{u}|^2
 \end{split}
\end{align*}
for $\Theta\in [\frac{2\pi}{3b},\frac{\pi}{b}-\delta]$, and 
\begin{align*}
\begin{split}
 &  \left(\frac{c^{-1}\Lambda}{4}+\frac{n(n-1)}{4}\beta_2\Psi_{\delta,\epsilon}^2-\frac{n-1}{2}\beta_2|\n\Psi_{\delta,\epsilon}|\right)|\tilde{u}|^2+\left\langle \mc{R}^{\tilde{E}}\tilde{u},\tilde{u}\right\rangle\\
 &\geq -\frac{n-1}{2r}\frac{1}{a\sin (b\Theta)}|\tilde{u}|^2-L\delta|\tilde{u}|^2
 \end{split}
\end{align*}
for $\Theta\in[\frac{\pi}{3b},\frac{2\pi}{3b}]$, where $L>0$ is a constant independent of $\delta,\varepsilon$. Therefore, 
\begin{align}\begin{split}\label{eqn14}
\begin{split}
 & \quad \left(\frac{c^{-1}\Lambda}{4}+\frac{n(n-1)}{4}\beta_2\Psi_{\delta,\epsilon}^2-\frac{n-1}{2}\beta_2|\n\Psi_{\delta,\epsilon}|\right)|\tilde{u}|^2+\left\langle \mc{R}^{\tilde{E}}\tilde{u},\tilde{u}\right\rangle\\
 &\geq -\frac{n-1}{2r}\frac{1}{a\sin (b\Theta)}|\tilde{u}|^2-L\delta\cdot 1_{\{\Theta\in [\frac{\pi}{3b},\frac{2\pi}{3b}]\}}|\tilde{u}|^2
 \end{split}
\end{split}\end{align}
on $M_\delta\times\{\pi\}$. Following the argument in \cite{WX}, we obtain the following proposition. 
\begin{proposition}\label{prop1}
	For any given $x_0\in M_\delta$ and small $\delta_0>0$, let $B_{\delta_0}(x_0)\subset M_\delta$ be the open ball of radius $\delta_0$ centered at $x_0$. There exists no positive constant $C$ such that the following inequality
	 \begin{align}\begin{split}\label{eqn11}
\begin{split}
 & \quad \left(\frac{c^{-1}{\Lambda}_{ c }}{4}+\frac{n(n-1)}{4}\beta_2\Psi_{\delta,\epsilon}^2-\frac{n-1}{2}\beta_2|\n\Psi_{\delta,\epsilon}|\right)|\tilde{u}|^2+\left\langle \mc{R}^{\tilde{E}}\tilde{u},\tilde{u}\right\rangle\\
 &\geq \frac{1}{C}1_{\{B_{\delta_0}(x_0)\times \{\pi\}\}}|\tilde{u}|^2 -\frac{n-1}{2r}\frac{1}{a\sin (b\Theta)}|\tilde{u}|^2-L\delta\cdot 1_{\{\Theta\in [\frac{\pi}{3b},\frac{2\pi}{3b}]\}}|\tilde{u}|^2
 \end{split}
\end{split}\end{align}
holds on $M_\delta\times \{\pi\}$ for any small $\delta,\varepsilon$ and any large $r$.
\end{proposition}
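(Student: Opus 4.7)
The argument proceeds by contradiction, following the strategy of \cite{WX}. Suppose for contradiction that such a constant $C>0$ exists so that \eqref{eqn11} is valid on $M_\delta\times\{\pi\}$ for every sufficiently small $\delta,\varepsilon$ and every sufficiently large $r$. Using \eqref{eqn11} to lower bound the curvature-and-potential expression that appears in \eqref{eqn8}, then integrating over $M_\delta\times\{\pi\}$, one arrives at
\begin{equation*}
\int_{M_\delta\times\{\pi\}}\beta|\mc{Q}_e\tilde{u}|^2 + \frac{1}{C}\int_{B_{\delta_0}(x_0)\times\{\pi\}}|\tilde{u}|^2 \leq \frac{n-1}{2r}\int_{M_\delta\times\{\pi\}}\frac{|\tilde{u}|^2}{a\sin(b\Theta)} + L\delta\int_{M_\delta\times\{\pi\}}|\tilde{u}|^2.
\end{equation*}

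The plan is to let $\delta,\varepsilon\to 0$ and $r\to\infty$, extract a convergent subsequence of $\tilde{u}=\tilde{u}_{\delta,\varepsilon,r}$, and derive a contradiction from the resulting limit. Normalize the family so that $\sup_K|\tilde{u}|=1$ on a fixed compact set $K\subset M$ large enough to contain a neighborhood of every relevant region once $\delta$ is small. Interior elliptic estimates applied to the Dirac-type equation $\mc{D}^{\tilde{S}\otimes\tilde{E}}\tilde{u}=\tfrac{in}{2}\Psi_{\delta,\varepsilon}\tilde{u}$, whose coefficients $\Psi_{\delta,\varepsilon}$ are uniformly bounded on every fixed compact subset as $\delta,\varepsilon\to 0$, give uniform $C^k_{\mathrm{loc}}$ bounds on $\tilde{u}$ in the interior, and a diagonal subsequence therefore converges in $C^k_{\mathrm{loc}}$ to a non-trivial spinor $\tilde{u}_\infty$ on $M\times\{\pi\}$ satisfying the limiting Dirac equation with coefficient $\tfrac{in}{2}f(\Theta)$ defined by \eqref{f}. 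Passing to the limit in the integrated inequality kills the right-hand side, since $1/r\to 0$ and $L\delta\to 0$ against uniform local $L^2$ control, and one obtains
\begin{equation*}
\int_{M\times\{\pi\}}\beta|\mc{Q}_e\tilde{u}_\infty|^2 + \frac{1}{C}\int_{B_{\delta_0}(x_0)\times\{\pi\}}|\tilde{u}_\infty|^2 \leq 0.
\end{equation*}
Both integrands being non-negative, $\tilde{u}_\infty$ must vanish identically on the open set $B_{\delta_0}(x_0)\times\{\pi\}$. A unique continuation theorem for the first-order elliptic Dirac-type operator satisfied by $\tilde{u}_\infty$ then forces $\tilde{u}_\infty\equiv 0$ on $M\times\{\pi\}$, in contradiction to the normalization, completing the proof.

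The main technical obstacle is the compactness step, namely producing a non-trivial limit $\tilde{u}_\infty$ after simultaneously letting $\delta,\varepsilon\to 0$ and $r\to\infty$, where the domains $M_\delta$ are enlarging toward the boundary of the model and the $\mb{S}^1$ factor is degenerating. This requires choosing the normalization at an interior point together with uniform local bounds on $\Psi_{\delta,\varepsilon}$ away from the endpoints $\theta=0,\tfrac{\pi}{b}$, and uses crucially that the computations are carried out on the slice $t=\pi$ where the extra condition $(\n_{e_{n+1}}^{\tilde{S}\otimes\tilde{E}}\tilde{u})(\cdot,\pi)=0$ effectively reduces the analysis to an $n$-dimensional elliptic problem. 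Once compactness is secured and unique continuation is available, the proposition follows immediately.
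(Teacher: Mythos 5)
Your proof starts from the same place the paper does: assume such a $C$ exists, combine \eqref{eqn11} with \eqref{eqn8} to arrive at
\[
\int_{M_\delta\times\{\pi\}}\beta|\mc{Q}_e\tilde{u}|^2 + \tfrac{1}{C}\int_{M_\delta\times\{\pi\}}1_{\{B_{\delta_0}(x_0)\times\{\pi\}\}}|\tilde{u}|^2 \leq \int_{M_\delta\times\{\pi\}}\tfrac{n-1}{2ra\sin(b\Theta)}|\tilde{u}|^2 + L\delta\int_{M_\delta\times\{\pi\}}1_{\{\Theta\in[\frac{\pi}{3b},\frac{2\pi}{3b}]\}}|\tilde{u}|^2.
\]
From there, however, you diverge from the paper. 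The paper does \emph{not} let $\delta,\varepsilon\to 0$ and $r\to\infty$ and extract a convergent subsequence; it closes the estimate for a \emph{single, fixed} choice of $\delta,\varepsilon,r$. The engine is the weighted Poincar\'e inequality of \cite[Proposition A.1]{BC}, which gives
\[
\int_{M_\delta\times\{\pi\}}|\tilde{u}|^2 \leq C_1\int_{M_\delta\times\{\pi\}}|\mc{Q}_e\tilde{u}|^2 + C_1\int_{M_\delta\times\{\pi\}}1_{\{B_{\delta_0}(x_0)\times\{\pi\}\}}|\tilde{u}|^2,
\]
and a uniform-in-$\delta,\varepsilon$ variant for the weight $1_{\{\Theta\in[\frac{\pi}{3b},\frac{2\pi}{3b}]\}}$. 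Absorbing the right side of the integrated inequality back into the left side, one finds that once $L\delta C_2 \leq \tfrac{1}{3}\min\{\beta,C^{-1}\}$ and $r$ is taken large enough (for that fixed $\delta$), the only possibility is $\int|\mc{Q}_e\tilde{u}|^2 = \int 1_B|\tilde{u}|^2 = 0$, whence $\tilde{u}\equiv 0$ on $M_\delta\times\{\pi\}$, contradicting B\"{a}r's result on the codimension of the zero set of a nontrivial harmonic spinor. No limit is taken.

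The gap in your version is exactly where you flag the ``main technical obstacle,'' and I do not think it is a merely technical point. To kill the right-hand side as $\delta\to 0$, $r\to\infty$, you need the quantities $\int_{M_\delta\times\{\pi\}}\tfrac{|\tilde{u}|^2}{a\sin(b\Theta)}$ and $\int_{M_\delta\times\{\pi\}}|\tilde{u}|^2$ to be uniformly bounded in $\delta,\varepsilon,r$; ``uniform local $L^2$ control'' on a fixed compact $K$ does not give this, because the domain $M_\delta$ is growing, the weight $1/\sin(b\Theta)$ blows up as $\Theta\to 0,\tfrac{\pi}{b}$, and the normalization $\sup_K|\tilde{u}|=1$ says nothing about $\tilde{u}$ near the shrinking boundary of $M_\delta$. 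The uniform global $L^2$ bound you need is precisely what the Poincar\'e inequality supplies: it dominates $\int|\tilde{u}|^2$ by $\int|\mc{Q}_e\tilde{u}|^2$ plus the mass on the fixed ball $B_{\delta_0}(x_0)$, which closes the loop \emph{without} any compactness or limiting argument. You should either import that inequality into your argument (at which point you can just run the paper's fixed-$\delta,\varepsilon,r$ argument directly, and the compactness step becomes superfluous), or provide a separate uniform global $L^2$ estimate, which is not obvious. As written, your compactness step is not justified, so the proof does not go through.

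One smaller remark: your appeal to ``unique continuation for the first-order elliptic Dirac-type operator'' is morally correct but should really be phrased via B\"{a}r's theorem, as in the paper. You have $\tilde{u}_\infty$ vanishing on an open subset of the codimension-one slice $M\times\{\pi\}\subset\tilde{M}$; standard unique continuation from a set of measure zero in $\tilde{M}$ does not apply, but B\"{a}r's bound (nodal set has codimension $\geq 2$ in the full $(n+1)$-manifold) rules out an $n$-dimensional zero set, giving $\tilde{u}_\infty\equiv 0$ on $\tilde{M}$. This is the same ingredient the paper uses at the end of the proof.
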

\begin{proof}
	 Suppose that there exists some constant $C>0$ such that \eqref{eqn11} holds for any small $\delta,\varepsilon$ and any large $r$, by \eqref{eqn8}, one has
	 \begin{align*}
  &\quad \int_{{M}_\delta\times\{\pi\}}\left(\beta\left|\mc{Q}_{e}\tilde{u}\right|^2+\frac{1}{C}1_{\{B_{\delta_0}(x_0)\times \{\pi\}\}}|\tilde{u}|^2\right)\\
  &\leq  \int_{{M}_\delta\times\{\pi\}}\left(\frac{n-1}{2r}\frac{1}{a\sin (b\Theta)}|\tilde{u}|^2+L\delta\cdot 1_{\{\Theta\in [\frac{\pi}{3b},\frac{2\pi}{3b}]\}}|\tilde{u}|^2 \right).
\end{align*}
On the other hand, by \cite[Proposition A.1]{BC}, one has
\begin{align}\begin{split}\label{eqn12}
\begin{split}
  \int_{M_\delta\times\{\pi\}}|\tilde{u}|^2&\leq C_1\int_{M_\delta\times\{\pi\}}(|\n|\tilde{u}||-K_1|\tilde{u}|)^2_{+}+C_1\int_{M_\delta\times \{\pi\}}1_{\{B_{\delta_0}(x_0)\times \{\pi\}\}}|\tilde{u}|^2\\
  &\leq C_1\int_{M_\delta\times\{\pi\}}|\mc{Q}_{e}\tilde{u}|^2+C_1\int_{M_\delta\times \{\pi\}}1_{\{B_{\delta_0}(x_0)\times \{\pi\}\}}|\tilde{u}|^2,
 \end{split}
\end{split}\end{align}
where $K_1=\max_{M_\delta}\frac{n}{2\beta (n-1)}|\Psi_{\delta,\varepsilon}|$, and so $C_1=C_1(\delta,\varepsilon)$ depends on $\delta,\varepsilon$. Similarly, 
\begin{align*}
\begin{split}
  \int_{M_\delta\times\{\pi\}}1_{\{\Theta\in [\frac{\pi}{3b},\frac{2\pi}{3b}]\}}|\tilde{u}|^2
  &\leq C_2\int_{M_\delta\times\{\pi\}}|\mc{Q}_{e}\tilde{u}|^2+C_2\int_{M_\delta\times \{\pi\}}1_{\{B_{\delta_0}(x_0)\times \{\pi\}\}}|\tilde{u}|^2,
 \end{split}
\end{align*}
where $C_2$ is independent of $\delta,\varepsilon$ since this term $\frac{n}{2\beta (n-1)}|\Psi_{\delta,\varepsilon}|$ is uniformly bounded on a compact, connected Lipschitz domain contains the subsets $\{\Theta\in [\frac{\pi}{3b},\frac{2\pi}{3b}]\}$ and $B_{\delta_0}(x_0)$. 
Hence 
\begin{align*}
\begin{split}
 &\quad \min\{\beta,C^{-1}\}\int_{{M}_\delta\times\{\pi\}}\left(\left|\mc{Q}_{e}\tilde{u}\right|^2+1_{\{B_{\delta_0}(x_0)\times \{\pi\}\}}|\tilde{u}|^2\right)\\
 &\leq \int_{{M}_\delta\times\{\pi\}}\left(\beta\left|\mc{Q}_{e}\tilde{u}\right|^2+\frac{1}{C}1_{\{B_{\delta_0}(x_0)\times \{\pi\}\}}|\tilde{u}|^2\right)\\
 &\leq  \int_{{M}_\delta\times\{\pi\}}\left(\frac{n-1}{2r}\frac{1}{a\sin (b\Theta)}|\tilde{u}|^2+L\delta\cdot 1_{\{\Theta\in [\frac{\pi}{3b},\frac{2\pi}{3b}]\}}|\tilde{u}|^2 \right)\\
 &\leq  \frac{C_3}{r}\int_{{M}_\delta\times\{\pi\}}|\tilde{u}|^2+L\delta C_2\left(\int_{M_\delta\times\{\pi\}}|\mc{Q}_{e}\tilde{u}|^2+\int_{M_\delta\times \{\pi\}}1_{\{B_{\delta_0}(x_0)\times \{\pi\}\}}|\tilde{u}|^2\right)\\
 &\leq (\frac{C_3C_1}{r}+L\delta C_2)\left(\int_{M_\delta\times\{\pi\}}|\mc{Q}_{e}\tilde{u}|^2+\int_{M_\delta\times \{\pi\}}1_{\{B_{\delta_0}(x_0)\times \{\pi\}\}}|\tilde{u}|^2\right).
 \end{split}
\end{align*}
For any given $\delta,\varepsilon$ such that $L\delta C_2\leq \frac{1}{3}\min\{\beta,C^{-1}\}$, and taking $r$ large enough such that $C_3C_1r^{-1}\leq \frac{1}{3}\min\{\beta,C^{-1}\}$, we have 
\begin{equation*}
  \int_{M_\delta\times\{\pi\}}|\mc{Q}_{e}\tilde{u}|^2=0=\int_{M_\delta\times \{\pi\}}1_{\{B_{\delta_0}(x_0)\times \{\pi\}\}}|\tilde{u}|^2.
\end{equation*}
From \eqref{eqn12}, we conclude that $\tilde{u}\equiv 0$ on $M_\delta\times\{\pi\}$, which is a contradiction since the zero set of a nontrivial harmonic spinor has codimension $\geq 2$ due to B\"ar \cite{Bar}.
\end{proof}
From  \eqref{eqn14}, one has
 \begin{align}\begin{split}
\begin{split}
 & \quad \left(\frac{c^{-1}{\Lambda}_ c }{4}+\frac{n(n-1)}{4}\beta_2\Psi_{\delta,\epsilon}^2-\frac{n-1}{2}\beta_2|\n\Psi_{\delta,\epsilon}|\right)|\tilde{u}|^2+\left\langle \mc{R}^{\tilde{E}}\tilde{u},\tilde{u}\right\rangle\\
 &\geq \frac{1}{4}(c^{-1}{\Lambda}_ c -c^{-1}\Lambda)|\tilde{u}|^2-\frac{n-1}{2r}\frac{1}{a\sin (b\Theta)}|\tilde{u}|^2-L\delta\cdot 1_{\{\Theta\in [\frac{\pi}{3b},\frac{2\pi}{3b}]\}}|\tilde{u}|^2.
 \end{split}
\end{split}\end{align}
By Proposition \ref{prop1}, we obtain 
\begin{equation*}
{\Lambda}_ c =\Lambda.
\end{equation*}

From \eqref{eqn13}, we know $|\n\Theta|\leq 1$ and the inequality is strict unless $d\varphi(\n\Theta)=0$. If $|\n\Theta|\not\equiv 0$, then $1-|\n\Theta|\geq \frac{1}{C}1_{\{B_{\delta_0}(x_0)\}}$ for some $x_0\in M_\delta$, $\delta_0>0$ and $C>0$. Hence 
\begin{align*}
\begin{split}
  |\n\Psi_{\delta,\varepsilon}|&=|\psi'_{\delta,\varepsilon}||\n\Theta|=-|\psi'_{\delta,\varepsilon}|(1-|\n\Theta|)+|\psi'_{\delta,\varepsilon}|\\
  &\leq -|\psi'_{\delta,\varepsilon}|\frac{1}{C}1_{\{B_{\delta_0}(x_0)\}}+|\psi'_{\delta,\varepsilon}|.
 \end{split}
\end{align*}
 By the definition of $\psi_{\delta,\varepsilon}$, one has $|\psi'_{\delta,\varepsilon}|\geq \frac{2a_1b}{n}-K\delta$. By taking sufficiently small $\delta>0$ satisfying $\delta\leq \frac{a_1b}{Kn}$, one has
  \begin{equation*}
    |\n\Psi_{\delta,\varepsilon}|\leq -\frac{a_1b}{n}\frac{1}{C}1_{\{B_{\delta_0}(x_0)\}}+|\psi'_{\delta,\varepsilon}|.
\end{equation*}
Using the same proof as \eqref{eqn14}, we obtain 
\begin{align*}
\begin{split}
 & \quad \left(\frac{c^{-1}\Lambda}{4}+\frac{n(n-1)}{4}\beta_2\Psi_{\delta,\epsilon}^2-\frac{n-1}{2}\beta_2|\n\Psi_{\delta,\epsilon}|\right)|\tilde{u}|^2+\left\langle \mc{R}^{\tilde{E}}\tilde{u},\tilde{u}\right\rangle\\
 &\geq \frac{a_1 b(n-1)\beta_2}{2nC}1_{\{B_{\delta_0}(x_0)\times \{\pi\}\}}|\tilde{u}|^2 -\frac{n-1}{2r}\frac{1}{a\sin (b\Theta)}|\tilde{u}|^2-L\delta\cdot 1_{\{\Theta\in [\frac{\pi}{3b},\frac{2\pi}{3b}]\}}|\tilde{u}|^2,
 \end{split}
\end{align*}
 which is a contradiction to Proposition \ref{prop1}. Hence $|\n\Theta|\equiv 1$, and so $d\varphi(\n\Theta)\equiv 0$. Thus 
 \begin{equation}\label{eqn20}
  d\tilde{f}(\n\Theta)\equiv 0.
\end{equation}
Recall that $\mu_{n-1}\leq \cdots\leq \mu_1\leq \frac{1}{a\sin(b\Theta)}$, see \eqref{eqn15}. If $\mu_{n-1}\not\equiv \frac{1}{a\sin(b\Theta)}$, then 
\begin{equation*}
  \frac{1}{a\sin(b\Theta)}-\mu_{n-1}\geq \frac{1}{C}1_{\{B_{\delta_0}(x_0)\}} 
\end{equation*}
for some $x_0\in M_\delta$, $\delta_0>0$ and $C>0$. Thus 
 \begin{align}
\begin{split}
    \left\langle \mc{R}^{\tilde{E}}\tilde{u},\tilde{u}\right\rangle&\geq -\frac{1}{4}\sum_{1\leq k,\ell,k\neq\ell}\mu_k\mu_l|\tilde{u}|^2\\
    &\geq -\frac{(n-2)(n-1)}{4}\frac{1}{a^2\sin^2(b\Theta)}|\tilde{u}|^2-\frac{1}{r}\cdot\frac{n-1}{2a\sin(b\Theta)}|\tilde{u}|^2\\
    &\quad +\frac{n-1}{2Ca}1_{\{B_{\delta_0}(x_0)\}}.
\end{split}\end{align}
Using the same proof as \eqref{eqn14}, we obtain the inequality \eqref{eqn11}, which contradicts  Proposition \ref{prop1}. Therefore,\begin{equation}\label{eqn19}
  \mu_{n-1}= \cdots=\mu_1=\frac{1}{a\sin(b\Theta)},\quad\mu_n=\frac{1}{r}
\end{equation}
on $M_\delta\times\{\pi\}$ by  \eqref{eqn15}. Denote 
\begin{equation*}
  f: M \to \mb{S}^n,\quad f(x)=\tilde{f}(x,\pi).
\end{equation*}
By \eqref{eqn20} and \eqref{eqn19}, one has
\begin{equation}\label{eqn17}
f^*g_{\mb{S}^n}=(\mathrm{id}\times \{\pi\})^*  \tilde{f}^*g_{\mb{S}^n}=\frac{1}{a^2\sin^2(b\Theta)}(g-d\Theta\otimes d\Theta).
\end{equation}
On the other hand, by \eqref{eqn16}, one has
\begin{equation}\label{eqn18}
  f^*g_{\mb{S}^n}=(\mathrm{id}\times \{\pi\})^*  \tilde{f}^*g_{\mb{S}^n}=\varphi^*g_{\mathbb{S}^{n-1}}.
\end{equation}
Combining \eqref{eqn17} and \eqref{eqn18}, we conclude that
\begin{equation*}
  g=  d\Theta\otimes d\Theta+a^2\sin^2(b\Theta)\varphi^*g_{\mathbb{S}^{n-1}}=\Phi^*g_0.
\end{equation*}
This means that $\Phi$ is a local isometry, and since the target is simply connected, it follows that $\Phi$ is a global Riemannian isometry. The proof of the main Theorem \ref{thm1} for even $n$ is complete.

When $n$ is odd, the proof is somewhat simpler, as we can work with $M$ and $M_\delta$ directly and we do not need to consider the Cartesian product with $\mb{S}^1$. Let $S$ denote the spinor bundle over $M$, and let $E_0$ denote the spinor bundle over the round sphere $\mb{S}^{n-1}$. Since $n-1$ is even, we may decompose $E_0=E_0^{+} \oplus E_0^{-}$, where $E_0^{+}$and $E_0^{-}$denote the eigenbundles of the volume form. 

For each $\delta\in \Delta$, let $\nu$ be the outward unit normal vector and consider the indices of the following operators:
\begin{itemize}
  \item  Let $\mathrm{ind}_1$ denote the index of the Dirac operator on $S \otimes \varphi^* E_0^{+}$ with boundary conditions $u=-i \nu \cdot u$ on $\partial_{+} M_\delta$ and $u=i \nu \cdot u$ on $\partial_{-} M_\delta$;
\item  Let $\mathrm{ind}_2$ denote the index of the Dirac operator on $S \otimes \varphi^* E_0^{+}$ with boundary conditions $u=i \nu \cdot u$ on $\partial_{+} M_\delta$ and $u=-i \nu \cdot u$ on $\partial_{-} M_\delta$;
\item Let $\mathrm{ind}_3$ denote the index of the Dirac operator on $S \otimes \varphi^* E_0^{-}$ with boundary conditions $u=-i \nu \cdot u$ on $\partial_{+} M_\delta$ and $u=i \nu \cdot u$ on $\partial_{-} M_\delta$;
\item Let $\mathrm{ind}_4$ denote the index of the Dirac operator on $S \otimes \varphi^* E_0^{-}$ with boundary conditions $u=i \nu \cdot u$ on $\partial_{+} M_\delta$ and $u=-i \nu \cdot u$ on $\partial_{-}M_\delta$.
\end{itemize}
 Denote
\begin{equation*}
  \Delta_i=\{\delta\in\Delta:\mathrm{ind}_i>0\},\quad 1\leq i\leq 4.
\end{equation*}
Using the same arguments as above, we may assume that $\Delta_1$ contains $0$ in its closure.

Denote $E=\varphi^*E_0^+$. Let $\n^{S\otimes E}$ denote the natural connection induced from $S$ and $E$, and let $\mc{D}^{S\otimes E}$ denote the Dirac operator acting on the sections of $S\otimes E$. For any  $\delta\in\Delta_1$ and using the deformation invariance of the index, we can find a section $u\in C^\infty(M_\delta,S\otimes E)$ such that
\begin{itemize}
  \item $u$ does not vanish identically;
  \item $\mc{D}^{S\otimes E}u-\frac{in}{2}\Psi_{\delta,\varepsilon}u=0$ on $M_\delta$;
  \item $u=-i\nu\cdot u$ on $\p_+M_\delta$ and $u=i\nu\cdot u$ on $\p_-M_\delta$.
\end{itemize}
Using the same proof as \eqref{eqn8} and \eqref{eqn14}, we obtain 
\begin{align}\begin{split}
\begin{split}
&\quad \int_{{M}_\delta}\beta\left|\mc{Q}_{e}u\right|^2+\int_{M_\delta}\left\langle \mc{R}^{E}u,u\right\rangle\\
	&\quad+\int_{M_\delta}\left(\frac{c^{-1} \Lambda_c }{4}+\frac{n(n-1)}{4}\beta_2\Psi_{\delta,\epsilon}^2-\frac{n-1}{2}\beta_2|\n\Psi_{\delta,\epsilon}|\right)|u|^2\leq 0.
	\end{split}
\end{split}\end{align}
and 
\begin{align}\begin{split}\label{eqn21}
\begin{split}
 & \quad \left(\frac{c^{-1}\Lambda}{4}+\frac{n(n-1)}{4}\beta_2\Psi_{\delta,\epsilon}^2-\frac{n-1}{2}\beta_2|\n\Psi_{\delta,\epsilon}|\right)|u|^2+\left\langle \mc{R}^{E}u,u\right\rangle\\
 &\geq -L\delta\cdot 1_{\{\Theta\in [\frac{\pi}{3b},\frac{2\pi}{3b}]\}}|u|^2
 \end{split}
\end{split}\end{align}
on $M_\delta$. Similar to Proposition \ref{prop1}, we have the following key proposition. 
\begin{proposition}
	For any given $x_0\in M_\delta$ and small $\delta_0>0$, let $B_{\delta_0}(x_0)\subset M_\delta$ be the open ball of radius $\delta_0$ centered at $x_0$. There exists no positive constant $C$ such that the following inequality
	 \begin{align}\begin{split}\label{eqn22}
\begin{split}
 & \quad \left(\frac{c^{-1}{\Lambda}_{ c }}{4}+\frac{n(n-1)}{4}\beta_2\Psi_{\delta,\epsilon}^2-\frac{n-1}{2}\beta_2|\n\Psi_{\delta,\epsilon}|\right)|u|^2+\left\langle \mc{R}^{E}u,u\right\rangle\\
 &\geq \frac{1}{C}1_{\{B_{\delta_0}(x_0)\}}|u|^2 -L\delta\cdot 1_{\{\Theta\in [\frac{\pi}{3b},\frac{2\pi}{3b}]\}}|u|^2
 \end{split}
\end{split}\end{align}
holds on $M_\delta$ for any small $\delta,\varepsilon$.
\end{proposition}
By \eqref{eqn21} and \eqref{eqn22}, and using the same arguments as the even-dimensional case, we can conclude that 
$d\varphi(\n\Theta)\equiv 0$ and 
\begin{equation}
  \mu_{n-1}= \cdots=\mu_1=\frac{1}{a\sin(b\Theta)}
\end{equation}
on $M_\delta$, where $\mu_i,1\leq i\leq n-1$, are the eigenvalues of $\varphi^*g_{\mb{S}^{n-1}}$ with respect to $g$. Hence 
\begin{equation*}
  g=  d\Theta\otimes d\Theta+a^2\sin^2(b\Theta)\varphi^*g_{\mathbb{S}^{n-1}}=\Phi^*g_0.
\end{equation*}
This means that $\Phi$ is a local isometry, and since the target is simply connected, it follows that $\Phi$ is a global Riemannian isometry. The proof of the main Theorem \ref{thm1} for odd $n$ is complete. 

Now we can also give a quick proof of Theorem \ref{thm1-mu}.
\begin{proof}[Proof of Theorem \ref{thm1-mu}]
  The proof is very similar to that of Theorem \ref{thm1}. We mention the modifications.
  To construct the auxiliary function $\psi_{\delta,\epsilon}$, we perturb the function $f$ given in \eqref{f in mu}. Another ingredient is to replace $\Lambda_c \geq \Lambda$ with the new condition $\Lambda_{c,\mu\circ \Phi}\geq 0$.
  \end{proof}

\section{spacetime harmonic function proof}\label{sec:harmonic proof}

In this section we prove Theorem \ref{thm1} in dimension 3 using the spacetime harmonic function technique. We also show an integral inequality in Proposition \ref{final integral estimate} without assuming the lower bound $\Lambda_c \geq \Lambda$.
We leave the statement and proof of Theorem \ref{thm1-mu} to the interested readers.
We only give a description of the model metrics and related functions at the end of the section.

First, we write down again explicitly the following constants $c$, $a$ and $b$ given earlier and an extra auxiliary constant $\alpha$ in dimension 3:
\begin{equation}
  c > \tfrac{1}{4}, \alpha = \sqrt{\frac{\Lambda (6 - c^{- 1})}{2 c (8 - c^{-
  1})}}, a = \sqrt{\frac{6 c - 1}{\Lambda}}, b = \tfrac{\sqrt{\Lambda} (4 -
  c^{- 1})}{\sqrt{2 c (6 - c^{- 1}) (8 - c^{- 1})}} . \label{constants}
\end{equation}
Let $\xi$ be the $C^1$ function be given by $\xi= \tfrac{2(6-c^{-1})}{3(c^{-1}-4)}\phi^{-1}\phi'$, it is
readily checked that
$\xi$ satisfies the ordinary
differential equation
\begin{equation}
  1 + \tfrac{9}{4 \alpha^2} \xi^2 - \tfrac{3}{2 \alpha^2} \xi' = 2 c
  \Lambda^{- 1} \phi^{- 2} \label{eq:ode}
\end{equation}
with $\xi' \geq 0$ on $(0, \tfrac{\pi}{b})$. We can view $\xi$ as a function on $M$, we set $f=\xi(s(x))$.

\begin{theorem}\label{harmonic main}
Assume that on $M$, the function $f$ is given as above, in addition, we assume that there exist $c_1>0$ and $c_2>0$ such that 
\begin{align}
  \Delta s- \frac{2(6-c^{-1})}{4-c^{-1}}\sqrt{g^{ss}}\phi^{-1}\phi' &= -c_1 s^{-1}+O(1)\text{ near } s =0, \label{technical0}\\
  \Delta s- \frac{2(6-c^{-1})}{4-c^{-1}}\sqrt{g^{ss}}\phi^{-1}\phi' &= c_2 (\tfrac{\pi}{b}-s)^{-1}+O(1)\text{ near } s=\tfrac{\pi}{b}. \label{technical1}
\end{align}
Let $g$ satisfy
  \begin{enumerate}
    \item $g \geq g_0$,
    \item $\inf_M R_g > - \infty$, $c > \tfrac{1}{4}$ and $\Lambda_c
    \geq \Lambda > 0$.
  \end{enumerate}
  Then $g = g_0 $.
\end{theorem}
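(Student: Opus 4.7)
\textbf{Proof proposal for Theorem \ref{harmonic main}.} The plan is to mimic the Stern--Hirsch spacetime harmonic function framework of \cite{hirsch-spectral-2023-arxiv}, adapted to the warped target $(0,\tfrac{\pi}{b})\times \mathbb{S}^2$. The four ingredients are: (i) a spacetime harmonic function $u$ on $M$ with drift built from $f=\xi\circ s$; (ii) a Stern-type integral identity on regular level sets, whose bulk integrand is engineered via the ODE \eqref{eq:ode} to match the model; (iii) the spectral bound $\Lambda_c\ge \Lambda$ used to exchange the pointwise scalar curvature term for a gradient term via the Rayleigh quotient; and (iv) the degree and $1$-Lipschitz hypotheses, which, combined with Gauss--Bonnet on the level sets, furnish the topological input.

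First, I would solve a Dirichlet problem on a compact exhaustion of $M$ for a function $u$ satisfying
\begin{equation*}
\Delta u + \alpha f \, |\nabla u| = 0,
\end{equation*}
with boundary values producing logarithmic blow-up as $s\to 0$ and $s\to \tfrac{\pi}{b}$. The technical hypotheses \eqref{technical0}--\eqref{technical1} on $\Delta s - \tfrac{2(6-c^{-1})}{4-c^{-1}}\sqrt{g^{ss}}\phi^{-1}\phi'$ near the two ends are precisely what is needed to construct super/subsolution barriers and to obtain Lemma \ref{lower gradient bound}, i.e.\ the lower bound on $|\nabla u|$ on compact subsets required to make sense of the Stern integrand $|\nabla^{2}u|^{2}/|\nabla u|$. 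The function $f$ enters via the identity \eqref{eq:ode}, which is the model ODE picking out the warping profile.

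Next I would apply Stern's Bochner identity to $u$: on each regular level set $\Sigma_t$ of $u$ the Gauss equation combined with the drift PDE yields, after integrating $|\nabla u|^{-1}$ times the Bochner formula and using co-area,
\begin{equation*}
\int_{t_1}^{t_2}\!\!\int_{\Sigma_t}\Bigl(\tfrac{|\nabla^2 u|^{2}}{|\nabla u|^{2}} - (\Delta u)^{2}|\nabla u|^{-2} + R_g - 2 K_{\Sigma_t}\Bigr)|\nabla u|\, dA\, dt \le \text{boundary terms},
\end{equation*}
where $K_{\Sigma_t}$ is the Gauss curvature of the level set. Inserting $\Delta u = -\alpha f|\nabla u|$ produces the combination $R_g - 2\alpha^{2} f^{2} + \text{lower order}$; the ODE \eqref{eq:ode} is arranged so that in the model this combination, evaluated at $u$ associated with the model metric, integrates to a sharp constant controlled by $\Lambda$. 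The $1$-Lipschitz hypothesis on $\Phi$ bounds the metric on $\Sigma_t$ from below by $\phi(s)^2 g_{\mathbb{S}^{2}}$, and the nonzero degree condition forces $\int_{\Sigma_t} K_{\Sigma_t}\, dA \ge 4\pi$ for generic $t$ via Gauss--Bonnet. This yields Proposition \ref{final integral estimate}, an integral estimate that does not yet use the spectral hypothesis.

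The spectral bound $\Lambda_c \ge \Lambda$ is finally used to eliminate the pointwise scalar curvature term: I would pair the integrand $R_g$ against a well-chosen test function---either the first eigenfunction of $-\Delta + cR_g$ obtained on an exhaustion, or a weight of the form $|\nabla u|^{1/2}$---and invoke the Rayleigh inequality $\int(c^{-1}|\nabla v|^{2}+R_g v^2)\ge c^{-1}\Lambda \int v^2$. Combined with Proposition \ref{final integral estimate} this forces all inequalities to be equalities, giving $|\nabla s|\equiv 1$, $d\varphi(\nabla s)\equiv 0$ and that the pullback of $\phi(s)^{2}g_{\mathbb{S}^{2}}$ under $\Phi$ equals the level-set metric, hence $g=g_0$ and $\Phi$ is an isometry. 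I expect the main obstacle to be the lower gradient bound underlying Lemma \ref{lower gradient bound}: without the blow-up rates \eqref{technical0}--\eqref{technical1} the Stern integrand is singular at critical points and the co-area identity cannot be closed; making the estimate robust enough to remove these technical conditions (matching the cleaner spinor proof in Section \ref{spin proof}) is the step I expect to be genuinely delicate.
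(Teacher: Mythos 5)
Your proposal captures the paper's general framework: spacetime harmonic functions with drift built from $f = \xi\circ s$, the HKK/Stern integral identity, barriers constructed from the blow-up rates \eqref{technical0}--\eqref{technical1} to get the lower gradient bound (this is exactly Lemma \ref{lower gradient bound}), and the Rayleigh quotient applied with test function $|\nabla u|^{1/2}$. The two-stage limiting construction (first $i\to\infty$ to obtain $\tilde u_\varepsilon$ with $|\nabla\tilde u_\varepsilon|^{1/2}\in H_0^1(M_{2\varepsilon})$, then $\varepsilon\to 0$) is also consistent with your sketch. So the approach is essentially the paper's. But there are two substantive inaccuracies.

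First, your Gauss--Bonnet logic is reversed. You claim that the nonzero degree condition forces $\int_{\Sigma_t} K_{\Sigma_t}\,dA \geq 4\pi$ for generic $t$. Since $\int_{\Sigma_t} K_{\Sigma_t}\,dA = 2\pi\chi(\Sigma_t)$, this is the assertion $\chi(\Sigma_t)\geq 2$, which is false in general: a closed surface of any genus admits degree-one maps to $\mathbb{S}^2$, and $\chi(\Sigma_t)\leq 2$ is what's always true. The actual mechanism in the paper runs the other way: the metric comparison $g\geq g_0$ lets you project $\Sigma_t$ onto $\mathbb{S}^2$ and bound $\int_{\Sigma_t}\phi^{-2}\,d\bar\sigma\geq 2\pi$ (an area comparison, not a curvature bound), so the integral inequality of Theorem \ref{final integral estimate} yields $4\pi\int_{-1}^1\chi(\Sigma_t)\,dt \geq 8\pi$; combining with the universal upper bound $\chi(\Sigma_t)\leq 2$ then forces equality and $\chi(\Sigma_t)= 2$ a.e. Your phrasing would not close this part of the argument.

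Second, the final rigidity step is missing. Equality in the integral inequality gives $|\nabla s|\equiv 1$ and the Hessian structure \eqref{hessian of u}, whence $g = ds^2 + \varphi(s)^2 g_0$ for some metric $g_0$ on $\mathbb{S}^2$, and the mean-curvature relation gives $\varphi=\phi$. But it still remains to show that $g_0$ is \emph{round}. The paper does this by computing $L(|\nabla u|^{1/2}) = 2c(K-1)\phi^{-2}|\nabla u|^{1/2}$ where $L=-\Delta+cR_g-\Lambda$ and $K$ is the Gauss curvature of $(\mathbb{S}^2,g_0)$; equality in the Rayleigh quotient with $\Lambda_c=\Lambda$ forces $|\nabla u|^{1/2}$ to be a genuine first eigenfunction, so $L(|\nabla u|^{1/2})=0$ and hence $K\equiv 1$. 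Your summary "forces all inequalities to be equalities ... hence $g=g_0$" elides this nontrivial step, and it is the step where the spectral hypothesis is used a second time in an essential way.

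A minor remark: Theorem \ref{harmonic main} as stated is the case $\Phi=\mathrm{id}$ (see the remark following the statement), so your references to a general $1$-Lipschitz degree map $\Phi$ prove a slightly different statement; the hypothesis $g\geq g_0$ plays the role of the Lipschitz condition and degree is automatic.
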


\begin{remark}
This theorem is a special case when $\Phi:(M,g)\to (M,g_0)$ is the identity. It is also possible to obtain for general $\Phi$. We encourage the readers to consult also \cite[Theorem 2.7]{hirsch-rigid-2022-arxiv}.
\end{remark}

\begin{remark}
  For $g_0$, the constants $c_1$ and $c_2$ in \eqref{technical0} and  \eqref{technical1} are $\tfrac{4}{4-c^{-1}}$,
  so the assumptions \eqref{technical0} and  \eqref{technical1} are reasonable.
\end{remark}

\subsection{Integral formula}
We recall an integral formula involving the spacetime harmonic function.

\begin{proposition}[{\cite[Proposition 3.2]{hirsch-spacetime-2022}}]
  \label{lm:hkk}Let $u$ be the solution to
\begin{align}\begin{split}
\Delta u + 3 f | \nabla u| & = 0 \text{ in } M, \\
u & = c_{\pm} \text{ on } \partial M,
\end{split}\end{align}
  where $f$ is Lipschitz, then $u \in C^{2, \alpha} \cap W^{3, p}$ and
\begin{align}\begin{split}
& - \int_{\partial_{\pm} M} 2 | \nabla u| (2 f \pm H) \\
\geq & \int_M (\tfrac{| \bar{\nabla}^2 u|^2}{| \nabla u|} + (R_g + 6
f^2) - 4 \langle \nabla f, \nabla u \rangle) - \int_{c_-}^{c_+} 4 \pi \chi
(\Sigma_t) \mathrm{d} t, \label{eq:hkk integral formula}
\end{split}\end{align}
  where $\bar{\nabla}^2 u = \nabla^2 u + f g_{i j} | \nabla u|$ and $\Sigma_t$ is the level set $\{u = t\}$.
\end{proposition}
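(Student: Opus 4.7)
Since this statement is cited from Hirsch--Kazaras--Khuri--Zhang, the plan is to reproduce their argument, which rests on three ingredients: existence of the spacetime harmonic function $u$, a pointwise identity combining a Bochner formula with the level set Gauss equation, and integration via coarea plus Gauss--Bonnet. For existence, the equation $\Delta u+3f|\nabla u|=0$ is degenerate where $\nabla u$ vanishes, so I would first regularize by solving $\Delta u_{\epsilon}+3f\sqrt{|\nabla u_{\epsilon}|^{2}+\epsilon^{2}}=0$ (a uniformly elliptic quasilinear Dirichlet problem with Lipschitz coefficient), then extract a limit $u\in C^{2,\alpha}\cap W^{3,p}$ from $\epsilon$-independent a priori estimates of Schauder and Calderon--Zygmund type.

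For the pointwise computation, on the open set $\{|\nabla u|>0\}$ the Bochner formula reads
\[
  |\nabla u|\,\Delta|\nabla u|=|\nabla^{2}u|^{2}-|\nabla|\nabla u||^{2}+\langle\nabla\Delta u,\nabla u\rangle+\mathrm{Ric}(\nabla u,\nabla u),
\]
and substituting $\Delta u=-3f|\nabla u|$ turns $\langle\nabla\Delta u,\nabla u\rangle$ into $-3|\nabla u|\langle\nabla f,\nabla u\rangle-3f\langle\nabla|\nabla u|,\nabla u\rangle$. On a regular level set $\Sigma_{t}$ with normal $\nu=\nabla u/|\nabla u|$, the Gauss equation gives
\[
  R_{\Sigma_{t}}=R_{g}-2\mathrm{Ric}(\nu,\nu)+H^{2}-|A|^{2},
\]
while the PDE forces the mean curvature $H=3f-|\nabla u|^{-1}\nabla^{2}u(\nu,\nu)$. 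Expanding $H^{2}$ and using $|\nabla^{2}u|^{2}=|A|^{2}|\nabla u|^{2}+2|\nabla|\nabla u||^{2}-(\nabla^{2}u(\nu,\nu))^{2}+(\text{refined terms})$, one combines Bochner and Gauss so that the $9f^{2}$ piece from $H^{2}$ pairs with Ricci to produce precisely $R_{g}+6f^{2}$, with the traceless refined Hessian yielding the $|\bar{\nabla}^{2}u|^{2}/|\nabla u|$ term. The outcome, after dividing by $|\nabla u|$, is a pointwise inequality in which a divergence dominates $|\bar{\nabla}^{2}u|^{2}/|\nabla u|+(R_{g}+6f^{2})-4\langle\nabla f,\nabla u\rangle-R_{\Sigma_{t}}$ distributionally.

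For the integration, I would apply the divergence theorem to the $\Delta|\nabla u|$ piece: the bulk contributes the first summand on the right-hand side, and the boundary contribution on $\partial_{\pm}M$ evaluates to $-2|\nabla u|(2f\pm H)$ after using the boundary condition $u=c_{\pm}$ (which forces $\nabla u$ along the inward normal) and expressing the normal derivative of $|\nabla u|$ via the mean curvature $H$ of $\partial_{\pm}M$ together with the PDE. The level set curvature term $R_{\Sigma_{t}}$ is integrated using the coarea formula and the Gauss--Bonnet theorem $\int_{\Sigma_{t}}R_{\Sigma_{t}}=8\pi\chi(\Sigma_{t})$ on each closed regular level set, producing $\int_{c_{-}}^{c_{+}}4\pi\chi(\Sigma_{t})\,\mathrm{d}t$.

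The main obstacle is controlling the critical set $\{\nabla u=0\}$, where the Bochner identity is singular and $\Sigma_{t}$ may fail to be smooth. The standard fix is to carry out the computation for the regularized solutions $u_{\epsilon}$ (whose gradients are bounded below whenever $\epsilon>0$ avoids the zero set, after a further shift), pass to the limit, and invoke Sard's theorem so that almost every $t\in(c_{-},c_{+})$ is a regular value of $u$; the $W^{3,p}$ regularity then guarantees that the coarea representation is well-defined and that $\chi(\Sigma_{t})$ is locally integrable in $t$. A secondary technical point is verifying that the boundary integrand $2|\nabla u|(2f\pm H)$ is in fact the correct limit, which requires the Hopf-type boundary strict positivity $|\nabla u|>0$ on $\partial_{\pm}M$; this follows from the maximum principle once the compatibility condition at the boundary is checked.
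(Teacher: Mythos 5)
The proposition is stated here as a citation to Hirsch--Kazaras--Khuri \cite{hirsch-spacetime-2022} and is not proved in this paper, so there is no internal proof to compare against; your task is really to reproduce the HKK argument, and your sketch does capture its architecture. The three ingredients you identify are indeed the right ones: existence and $C^{2,\alpha}\cap W^{3,p}$ regularity via the uniformly elliptic regularization $\Delta u_\varepsilon+3f\sqrt{|\nabla u_\varepsilon|^2+\varepsilon^2}=0$ with Schauder/Calder\'on--Zygmund estimates; the Bochner identity for $|\nabla u|$ on the regular set, combined with the traced Gauss equation $R_{\Sigma_t}=R_g-2\mathrm{Ric}(\nu,\nu)+H^2-|A|^2$ and the Hessian splitting $|\nabla^2 u|^2=|A|^2|\nabla u|^2+2|\nabla|\nabla u||^2-(\nabla^2 u(\nu,\nu))^2$; and integration via divergence theorem, coarea, Sard, and Gauss--Bonnet.

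Two bookkeeping slips to repair before this becomes a full proof. First, with $\nu=\nabla u/|\nabla u|$ and $H=\operatorname{div}\nu$, the PDE gives $H=-3f-|\nabla u|^{-1}\nabla^2 u(\nu,\nu)$, not $+3f$: since $\Delta u=\nabla^2 u(\nu,\nu)+H|\nabla u|$ and $\Delta u=-3f|\nabla u|$, the $3f$ comes with a minus sign. Second, Gauss--Bonnet for the intrinsic scalar curvature of a closed surface is $\int_{\Sigma_t}R_{\Sigma_t}=4\pi\chi(\Sigma_t)$, not $8\pi\chi(\Sigma_t)$ (recall $R_{\Sigma_t}=2K_{\Sigma_t}$ and $\int K=2\pi\chi$); this is precisely where the $4\pi$ coefficient in the quoted formula originates. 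Neither error is structural, but both must be tracked carefully to arrive at the exact coefficients $6f^2$ in the bulk and $2f\pm H$ on $\partial_\pm M$, and the derivation of $|\bar\nabla^2 u|^2/|\nabla u|$ from the traceless part requires expanding $|\bar\nabla^2 u|^2=|\nabla^2 u|^2-3f^2|\nabla u|^2$ (using $\bar\nabla^2 u=\nabla^2 u+f|\nabla u|g$ and $n=3$), which your sketch alludes to only as ``refined terms.''
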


We give the following integral inequality which might be of independent interest.

\begin{theorem} \label{final integral estimate}
Let $(M^3,g)$ and $f$ be given as in Theorem \ref{harmonic main}.
  Then there exists a spacetime harmonic function $u$ satisfying
  \[ \Delta u + 3 f | \nabla u| = 0 \]
  with $u (p_{\pm}) = \pm 1$, $| \nabla u|^{\tfrac{1}{2}} \in H^1_0 (M)$ and the
  following integral inequality
\begin{align}\begin{split}
& 4 \pi \int_{- 1}^1 \chi (\Sigma_t) \mathrm{d} t + \int_M c^{- 1}
(\Lambda - \Lambda_c) | \nabla u| \\
\geq & \int_M 2 \phi^{- 2} | \nabla u| + \int_M (6 - c^{- 1})  \left|
\nabla | \nabla u|^{\frac{1}{2}} + \frac{3}{6 - c^{- 1}} f| \nabla u|^{-
\frac{1}{2}} \nabla u \right|^2 . \label{final crucial integral inequality}
\end{split}\end{align}
\end{theorem}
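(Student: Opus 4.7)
The plan is to combine four ingredients: (i) existence of a spacetime harmonic function $u$ with the prescribed boundary data, obtained by approximation across the two collapsed ends; (ii) the HKK integral formula from Proposition~\ref{lm:hkk}; (iii) the spectral bound $\Lambda_c$ tested against $|\nabla u|^{1/2}$; and (iv) a refined Kato-type inequality for spacetime harmonic functions, combined with the ODE~\eqref{eq:ode} to produce the $\phi^{-2}$-term.

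First I would construct $u$. Because $M$ has no boundary but only two collapsed ends, I solve the Dirichlet problem $\Delta u_\delta + 3 f |\nabla u_\delta| = 0$ on the exhausting domains $M_\delta = \{\delta \le s \le \tfrac{\pi}{b}-\delta\}$ with $u_\delta = \pm 1$ on the two boundary components, using the existence theory in \cite{hirsch-spacetime-2022}. The conditions \eqref{technical0}--\eqref{technical1} provide barrier functions that control $|\nabla u_\delta|$ near the caps (the exponent is chosen precisely so that $v=\phi^{2/(4-c^{-1})}$ is a barrier), yielding uniform $C^{2,\alpha}\cap W^{3,p}$ estimates. Passing to $\delta\to 0$ gives $u\in C^{2,\alpha}_{\mathrm{loc}}\cap W^{3,p}_{\mathrm{loc}}(M)$ with $u(p_\pm)=\pm 1$ and, crucially, $|\nabla u|^{1/2}\in H^1_0(M)$; these same conditions force the boundary term $-\int_{\partial M_\delta}2|\nabla u|(2f\pm H)$ in Proposition~\ref{lm:hkk} to vanish in the limit.

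Next I apply Proposition~\ref{lm:hkk} on $M_\delta$ and pass to the limit to obtain
\[
4\pi\int_{-1}^{1}\chi(\Sigma_t)\,dt \geq \int_M \frac{|\bar{\nabla}^2 u|^2}{|\nabla u|}+\int_M (R_g+6f^2)|\nabla u|-4\int_M\langle\nabla f,\nabla u\rangle.
\]
Since $|\nabla u|^{1/2}\in H^1_0(M)$, the definition of $\Lambda_c$ gives $\int_M R_g|\nabla u|\geq c^{-1}\Lambda_c\int_M|\nabla u|-c^{-1}\int_M|\nabla|\nabla u|^{1/2}|^2$, which I substitute to produce the factor $c^{-1}(\Lambda-\Lambda_c)\int_M |\nabla u|$ and the coefficient $-c^{-1}$ in front of $\int_M|\nabla|\nabla u|^{1/2}|^2$. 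Then the refined Kato inequality for spacetime harmonic functions in dimension~3, which comes from decomposing $\nabla^2 u$ into traceless plus trace parts in the frame adapted to $\nabla u$ and using $\Delta u = -3f|\nabla u|$, yields
\[
\frac{|\bar{\nabla}^2 u|^2}{|\nabla u|}\geq 6\bigl|\nabla|\nabla u|^{1/2}\bigr|^2 + 3f\,\frac{\langle\nabla|\nabla u|,\nabla u\rangle}{|\nabla u|}+\tfrac{3}{2}f^2|\nabla u|.
\]
The coefficient $6$ combines with the $-c^{-1}$ from the spectral substitution to produce the coefficient $(6-c^{-1})$ required by the target, and the cross term $3f|\nabla u|^{-1}\langle\nabla|\nabla u|,\nabla u\rangle$ already matches the cross term appearing in the target square.

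Finally, the ODE~\eqref{eq:ode} rewrites $c^{-1}\Lambda|\nabla u|$ modulo $f^2|\nabla u|$ and $\xi'|\nabla u|$ as $2\phi^{-2}|\nabla u|$; the Lipschitz condition $g\geq g_0$ gives $|\nabla s|_g\leq 1$, so $\langle\nabla f,\nabla u\rangle=\xi'\langle\nabla s,\nabla u\rangle\leq \xi'|\nabla u|$, which lets me absorb $-4\int_M\langle\nabla f,\nabla u\rangle$ against the $\xi'|\nabla u|$ generated by the ODE; after these substitutions, an algebraic check using $\alpha^2=\tfrac{\Lambda(6-c^{-1})}{2c(8-c^{-1})}$ and the relations in \eqref{constants} shows that the remaining $f^2|\nabla u|$ coefficient is exactly $\tfrac{9}{6-c^{-1}}$, so the quadratic in $\nabla|\nabla u|^{1/2}$ and $f|\nabla u|^{-1/2}\nabla u$ completes the square as $(6-c^{-1})\bigl|\nabla|\nabla u|^{1/2}+\tfrac{3}{6-c^{-1}}f|\nabla u|^{-1/2}\nabla u\bigr|^2$. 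The main obstacle will be Step~1 (the analysis at the collapsed ends): one has to show that $|\nabla u|^{1/2}$ lies in $H^1_0$ and that the boundary term in Proposition~\ref{lm:hkk} dies in the limit despite $f=\xi\circ s$ blowing up, which is precisely the role of the technical assumptions \eqref{technical0}--\eqref{technical1}.
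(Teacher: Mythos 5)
Your skeleton has the right ingredients, but the ``algebraic check'' you defer to the end actually fails: after applying the raw HKK formula, the refined Kato bound $\frac{|\bar\nabla^2 u|^2}{|\nabla u|}\geq 6|\nabla|\nabla u|^{1/2}|^2+3f\frac{\langle\nabla|\nabla u|,\nabla u\rangle}{|\nabla u|}+\tfrac{3}{2}f^2|\nabla u|$, and the spectral substitution, the residual coefficients are $\bigl(\tfrac{3}{2}+6-\tfrac{9}{6-c^{-1}}\bigr)$ in front of $f^2|\nabla u|$ and $-4$ in front of $\langle\nabla f,\nabla u\rangle$, i.e.\ $\tfrac{3(24-5c^{-1})}{2(6-c^{-1})}$ and $-4$. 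To invoke the ODE \eqref{eq:ode}, multiplied by $c^{-1}\Lambda$, you need instead $\tfrac{9(8-c^{-1})}{2(6-c^{-1})}$ and $-\tfrac{3(8-c^{-1})}{6-c^{-1}}$. These agree only when $c^{-1}=0$. What the paper does to repair this is to add $\tfrac{c^{-1}}{6-c^{-1}}\int_{M_\varepsilon}\operatorname{div}(f_{\varepsilon,i}\nabla u_{\varepsilon,i})$ to both sides of the HKK identity (using $\operatorname{div}(f\nabla u)=\langle\nabla f,\nabla u\rangle-3f^2|\nabla u|$ for spacetime harmonic $u$); this rebalances $6f^2\rightsquigarrow\tfrac{3(12-c^{-1})}{6-c^{-1}}f^2$ and $-4\langle\nabla f,\nabla u\rangle\rightsquigarrow-\tfrac{3(8-c^{-1})}{6-c^{-1}}\langle\nabla f,\nabla u\rangle$, and simultaneously converts the boundary term into one that can be signed. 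Without this divergence trick your estimate, after invoking the ODE, produces an extra error $\tfrac{c^{-1}}{6-c^{-1}}(\xi'-3f^2)|\nabla u|$. For $c^{-1}<4$ the constant $\kappa'=\tfrac{2(6-c^{-1})}{3(c^{-1}-4)}$ satisfies $\kappa'\leq -1$, and a direct computation with $\phi=a\sin(b\theta)$ shows $\xi'-3\xi^2<0$ near $\theta=0$ and $\theta=\tfrac{\pi}{b}$ — the error has the wrong sign exactly where $f$ blows up, so the inequality does not close.

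A secondary but real gap is the existence and limiting argument. You work directly with the unmodified $f=\xi\circ s$ and the fixed Dirichlet data $\pm1$, and invoke barriers from \eqref{technical0}--\eqref{technical1} to control $|\nabla u_\delta|$ uniformly. The paper instead introduces the flattened coefficients $f_{\varepsilon,i}$ (piecewise-constant near $\partial M_\varepsilon$) precisely so that (a) $f_{\varepsilon,i}\to\mp\infty$ on $\partial_\mp M_\varepsilon$ as $i\to\infty$ makes the boundary term nonpositive, and (b) the uniform bound $\int f_{\varepsilon,i}^2|\nabla\tilde u_{\varepsilon,i}|\leq C$ forces the limit $|\nabla\tilde u_\varepsilon|^{1/2}$ to vanish a.e.\ on $M_\varepsilon\setminus M_{2\varepsilon}$, giving membership in $H^1_0$. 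The solutions $u_{\varepsilon,i}$ must also be rescaled by $\sup_\Omega|\nabla u_{\varepsilon,i}|$ before a nontrivial limit can be extracted, which is where Lemma \ref{lower gradient bound} (your barrier step sits there) and the Poincar\'e inequality on $\Omega$ enter; the normalization to $u(p_\pm)=\pm1$ is only restored a posteriori by an affine change. This machinery is not cosmetic: without it your Fatou pass to the limit has no uniform control on the gradient nor a mechanism to force $|\nabla u|^{1/2}$ into $H^1_0(M)$, which is the very hypothesis you need in order to test the $\Lambda_c$-quadratic form with $|\nabla u|^{1/2}$.
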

\begin{remark}
Actually, we have $\lim_{x \to p_{\pm}} | \nabla u| = 0$. 
\end{remark}

Then Theorem \ref{harmonic main} can be obtained from the
inequality \eqref{final crucial integral inequality} by applying the condition
$\Lambda_c \geq \Lambda$.

\subsection{Construction of perturbed functions $f_{\varepsilon, i}$}

Let $0 < 2 \varepsilon < \tfrac{\pi}{b}$ and define
\[ M_{\varepsilon} = \{x \in M : \varepsilon \leq s (x) \leq
   \tfrac{\pi}{b} - \varepsilon\} . \]
Then $\{M_{\varepsilon} \}_{0 < \varepsilon < \tfrac{\pi}{2 b}}$ is an
exhaustion of $M$. We fix constants $2 \varepsilon < r_0 < r_1 < \tfrac{\pi}{2
b}$, for each sufficiently small $\varepsilon$ and sufficiently large $i$, we
construct the following piecewise linear function
\begin{align}\begin{split}
h_{\varepsilon, i} (s) = \left\{\begin{array}{ll}
i^{- 1} & \text{ for } \varepsilon \leq s \leq 2 \varepsilon ;\\
s + i^{- 1} - 2 \varepsilon & \text{ for } 2 \varepsilon \leq s
\leq r_0 ;\\
\tfrac{r_1 - (r_0 + i^{- 1} - 2 \varepsilon)}{r_1 - r_0} (s - r_1) + r_1 &
\text{ for } r_0 \leq s \leq r_1 ;\\
s & \text{ for } r_1 \leq s \leq \tfrac{\pi}{b} - r_1 ;\\
\tfrac{r_1 - (r_0 + i^{- 1} - 2 \varepsilon)}{r_1 - r_0} (s -
\tfrac{\pi}{b} + r_1) + \tfrac{\pi}{b} - r_1 & \text{ for } \tfrac{\pi}{b}
- r_1 \leq s \leq \tfrac{\pi}{b} - r_0 ;\\
s - (\tfrac{\pi}{b} - 2 \varepsilon) + (\tfrac{\pi}{b} - i^{- 1}) & \text{
for } \tfrac{\pi}{b} - r_0 \leq s \leq \tfrac{\pi}{b} - 2
\varepsilon ;\\
\tfrac{\pi}{b} - i^{- 1} & \text{ for } \tfrac{\pi}{b} - 2 \varepsilon
\leq s \leq \tfrac{\pi}{b} - \varepsilon .
\end{array}\right. &
\end{split}\end{align}
on $[\varepsilon, \tfrac{\pi}{b} - \varepsilon]$.

We can also regarded $\xi$ as a function on $M$ in the sense that $\xi$ takes
the constant value $\xi (s)$ in each $s$-level set.

We define now
\begin{equation}
  f_{\varepsilon, i} = \xi (h_{\varepsilon, i} (s (x))), x \in
  \bar{M}_{\varepsilon} . \label{f e j}
\end{equation}
Obviously, $f_{\varepsilon, i}$ are Lipschitz. We have different lower bounds
for the quantity
\[ 1 + \tfrac{9}{4 \alpha^2} f_{\varepsilon, i}^2 - \tfrac{3}{2 \alpha^2} |
   \nabla f_{\varepsilon, i} | \]
for different values of $s (x)$.

The easiest case is when $r_1 \leq s (x) \leq \tfrac{\pi}{b} - r_1$.
In this case, $f_{\varepsilon, i} (x) = \xi (s (x))$, using that $g \geq
\bar{g}$, we have
\begin{equation}
  - | \nabla f_{\varepsilon, i} | = - \xi' | \nabla s | \geq - \xi'  |
  \bar{\nabla} s |_{\bar{g}} = - \xi' . \label{gradient f e i}
\end{equation}
Since $\xi$ satisfies \eqref{eq:ode}, we see that
\begin{equation}
  1 + \tfrac{9}{4 \alpha^2} f_{\varepsilon, i}^2 - \tfrac{3}{2 \alpha^2} |
  \nabla f_{\varepsilon, i} | \geq 2 c \Lambda^{- 1} \phi^{- 2} \text{
  for } s (x) \in (r_1, \tfrac{\pi}{b} - r_1) . \label{eq:outside r1
  neighborhood}
\end{equation}
For other values of $s (x)$, the procedures are similar. We do it in the
following. For $s (x) \in (\varepsilon, 2 \varepsilon)$, we have
$f_{\varepsilon, i} = \xi (\tfrac{1}{i})$, $| \nabla f_{\varepsilon, i} | = 0$
and
\begin{align}\begin{split}
& 1 + \tfrac{9}{4 \alpha^2} f_{\varepsilon, i}^2 - \tfrac{3}{2 \alpha^2} |
\nabla f_{\varepsilon, i} | \\
\geq & 1 + \tfrac{9}{4 \alpha^2} (\xi (\tfrac{1}{i}))^2 - \xi'
(\tfrac{1}{i}) \\
\geq & 2 c \Lambda^{- 1} \tfrac{1}{\phi (\tfrac{1}{i})^2} .
\end{split}\end{align}
For $s (x) \in (2 \varepsilon, r_0)$, $f_{\varepsilon, i} = \xi (s (x) +
\tfrac{1}{i} - 2 \varepsilon)$, so
\begin{align}\begin{split}
& 1 + \tfrac{9}{4 \alpha^2} f_{\varepsilon, i}^2 - \tfrac{3}{2 \alpha^2} |
\nabla f_{\varepsilon, i} | \\
\geq & 1 + \tfrac{9}{4 \alpha^2} (\xi (s (x) + \tfrac{1}{i} - 2
\varepsilon))^2 - \xi' (s (x) + \tfrac{1}{i} - 2 \varepsilon) \\
\geq & 2 c \Lambda^{- 1} \tfrac{1}{\phi (s (x) + \tfrac{1}{i} - 2
\varepsilon)^2} \\
\geq & 2 c \Lambda^{- 1} \tfrac{1}{\phi (s (x))^2} .
\end{split}\end{align}
For $s (x) \in (r_0, r_1)$, we get
\begin{align}\begin{split}
& 1 + \tfrac{9}{4 \alpha^2} f_{\varepsilon, i}^2 - \tfrac{3}{2 \alpha^2} |
\nabla f_{\varepsilon, i} | \\
\geq & 1 + \tfrac{9}{4 \alpha^2} (\xi (h_{\varepsilon, i} (s)))^2 -
\tfrac{3}{2 \alpha^2} \xi' (h_{\varepsilon, i} (s)) h_{\varepsilon, i}' |
\nabla s | \\
\geq & 1 + \tfrac{9}{4 \alpha^2} (\xi (h_{\varepsilon, i} (s)))^2 -
\tfrac{3}{2 \alpha^2} \xi' (h_{\varepsilon, i} (s)) \tfrac{r_1 - (r_0 + i^{-
1} - 2 \varepsilon)}{r_1 - r_0} \\
\geq & 2 c \Lambda^{- 1} \tfrac{1}{\phi (h_{\varepsilon, i} (s))^2} -
\tfrac{3}{2 \alpha^2} \xi' (h_{\varepsilon, i} (s)) \tfrac{2 \varepsilon -
i^{- 1}}{r_1 - r_0} \\
\geq & 2 c \Lambda^{- 1} \tfrac{1}{\phi (s)^2} - C_{r_0, r_1}
\varepsilon.
\end{split}\end{align}
Let $u_{\varepsilon, i}$ be the solution to the spacetime harmonic function
boundary value problem below
\begin{align}\begin{split}
\Delta u_{\varepsilon, i} + 3 f_{\varepsilon, i} | \nabla u_{\varepsilon, i}
| & = 0 \text{ in } M_{\varepsilon}, \label{u e i spacetime harmonic}
\\
u_{\varepsilon, i} & = 1 \text{ on } \{t (x) = \tfrac{\pi}{b} -
\varepsilon\}, \\
u_{\varepsilon, i} & = - 1 \text{ on } \{t (x) = \varepsilon\} .
\end{split}\end{align}
\text{{\bfseries{Step 1}}}. We do modifications to the integral formula \eqref{eq:hkk integral formula}.

We apply \eqref{eq:hkk integral formula}, add a divergence term to its both
sides and obtain
\begin{align}\begin{split}
& \int_{\partial_- M_{\varepsilon}} | \nabla u_{\varepsilon, i} | \left(
\tfrac{3 (8 - c^{- 1})}{6 - c^{- 1}} f_{\varepsilon, i} - 2 H_{\varepsilon}
\right) - \int_{\partial_+ M_{\varepsilon}} | \nabla u_{\varepsilon, i} |
\left( \tfrac{3 (8 - c^{- 1})}{6 - c^{- 1}} f_{\varepsilon, i} + 2
H_{\varepsilon} \right) \\
= & \int_{\partial_- M_{\varepsilon}} | \nabla u_{\varepsilon, i} | (4
f_{\varepsilon, i} - 2 H_{\varepsilon}) - \int_{\partial_+ M_{\varepsilon}}
| \nabla u_{\varepsilon, i} | (4 f_{\varepsilon, i} + 2 H_{\varepsilon})
\\
& - \int_{M_{\varepsilon}} \tfrac{c^{- 1}}{6 - c^{- 1}}
\ensuremath{\operatorname{div}} (f_{\varepsilon, i} \nabla u_{\varepsilon,
i}) \\
\geq & \int_{M_{\varepsilon}} \left[ \frac{| \bar{\nabla}^2
u_{\varepsilon, i} |^2}{| \nabla u_{\varepsilon, i} |} + (R_g + 6
f_{\varepsilon, i}^2) | \nabla u_{\varepsilon, i} | - 4 \langle \nabla
u_{\varepsilon, i}, \nabla f_{\varepsilon, i} \rangle \right] - \int_{- 1}^1
4 \pi \chi (\Sigma_t) \\
& - \int_{M_{\varepsilon}} \tfrac{c^{- 1}}{6 - c^{- 1}}
\ensuremath{\operatorname{div}} (f_{\varepsilon, i} \nabla u_{\varepsilon,
i}) \\
= & \int_{M_{\varepsilon}} \left[ \frac{| \bar{\nabla}^2 u_{\varepsilon, i}
|^2}{| \nabla u_{\varepsilon, i} |} + (R_g + \tfrac{3 (12 - c^{- 1})}{6 - c^{-
1}} f_{\varepsilon, i}^2) | \nabla u_{\varepsilon, i} | - \tfrac{3 (8 - c^{-
1})}{6 - c^{- 1}} \langle \nabla u_{\varepsilon, i}, \nabla f_{\varepsilon,
i} \rangle \right] \\
& - \int_{- 1}^1 4 \pi \chi (\Sigma_t),
\end{split}\end{align}
where $\Sigma_t$ is the level set of $u_{\varepsilon, i}$.

On $\partial_- M_{\varepsilon}$, we have that $f_{\varepsilon, i} \to -
\infty$ as $i \to \infty$, choose any sufficiently large $i$ so that
\[ H_{\varepsilon} - \tfrac{3 (4 - c^{- 1})}{6 - c^{- 1}} f_{\varepsilon, i}
   \geq 0 \text{ on } \partial_- M_{\varepsilon}, \]
and
\[ H_{\varepsilon} \geq \tfrac{3 (4 - c^{- 1})}{6 - c^{- 1}}
   f_{\varepsilon, i} + \tfrac{12}{6 - c^{- 1}} f_{\varepsilon, i} = \tfrac{3
   (8 - c^{- 1})}{6 - c^{- 1}} f_{\varepsilon, i} \text{ on } \partial_-
   M_{\varepsilon} . \]
Similarly,
\[ \tfrac{3 (8 - c^{- 1})}{6 - c^{- 1}} f_{\varepsilon, i} + 2 H_{\varepsilon}
   \geq 0 \text{ on } \partial_+ M_{\varepsilon} . \]
These considerations lead to
\[ \int_{\partial_- M_{\varepsilon}} | \nabla u_{\varepsilon, i} | \left(
   \tfrac{3 (8 - c^{- 1})}{6 - c^{- 1}} f_{\varepsilon, i} - 2 H_{\varepsilon}
   \right) - \int_{\partial_+ M_{\varepsilon}} | \nabla u_{\varepsilon, i} |
   \left( \tfrac{3 (8 - c^{- 1})}{6 - c^{- 1}} f_{\varepsilon, i} + 2
   H_{\varepsilon} \right) \leq 0. \]
Hence,
\begin{align}\begin{split}
& \int_{- 1}^1 4 \pi \chi (\Sigma_t) \mathrm{d}t \\
\geq & \int_{M_{\varepsilon}} \left[ \frac{| \bar{\nabla}^2
u_{\varepsilon, i} |^2}{| \nabla u_{\varepsilon, i} |} + (R_g + \tfrac{3 (12 -
c^{- 1})}{6 - c^{- 1}} f_{\varepsilon, i}^2) | \nabla u_{\varepsilon, i} | -
\tfrac{3 (8 - c^{- 1})}{6 - c^{- 1}} \langle \nabla u_{\varepsilon, i},
\nabla f_{\varepsilon, i} \rangle \right] . \label{u e i integral
inequality}
\end{split}\end{align}
\text{{\bfseries{}}}\text{{\bfseries{Step 2}}}. Our goal is to construct a
function $\tilde{u}_{\varepsilon}$ whose gradient $| \nabla
\tilde{u}_{\varepsilon} |^{\tfrac{1}{2}}$ is compactly supported on
$M_{\varepsilon}$ out of the family $u_{\varepsilon, i}$ so that we could make
use of \eqref{L c}.

\

Using the construction of $f_{\varepsilon, i}$, we see that
\[ 1 + \tfrac{9}{4 \alpha^2} f_{\varepsilon, i}^2 - \tfrac{3}{2 \alpha^2} |
   \nabla f_{\varepsilon, i} | \geq - C_{r_0, r_1} \varepsilon . \]
We take $\varepsilon$ small so that
\[ 1 + \tfrac{9}{4 \alpha^2} f_{\varepsilon, i}^2 - \tfrac{3}{2 \alpha^2} |
   \nabla f_{\varepsilon, i} | \geq - 1. \]
 Since $\inf_M R_g >-\infty$, we assume that $R_g \geq -R_0$ for some positive constant $R_0$, so
\begin{align}\begin{split}
& R_g + \tfrac{3 (12 - c^{- 1})}{6 - c^{- 1}} f_{\varepsilon, i}^2 - \tfrac{3
(8 - c^{- 1})}{6 - c^{- 1}} | \nabla f_{\varepsilon, i} | \\
\geq & - R_{0} + \tfrac{3 c^{- 1}}{2 (6 - c^{- 1})} f_{\varepsilon, i}^2
- \tfrac{4 \alpha^2}{9} \cdot \tfrac{9 (8 - c^{- 1})}{2 (6 - c^{- 1})} - 1
\\
\geq & - R_{0} - c^{- 1} \Lambda - 1. \label{lower bound in Omega}
\end{split}\end{align}
Now we choose a region $\Omega = M_{\varepsilon_0}$ ($\varepsilon_0 > 2
\varepsilon$) sufficiently small depending only on $c$, $R_0$ and $\Lambda$
such that $| f_{\varepsilon, i} |$ is large enough that
\begin{equation}
  \tfrac{3 c^{- 1}}{2 (6 - c^{- 1})} f_{\varepsilon, i}^2 \geq
  \tfrac{c^{- 1} f_{\varepsilon, i}^2}{6 - c^{- 1}} + R_0 + c^{- 1} + 2
  \label{lower bound outside Omega}
\end{equation}
on $M_{\varepsilon} \backslash \Omega$ for sufficiently large $i$ and small
$\varepsilon$.

Fixing such an $\varepsilon_0$, we know that $f_{\varepsilon, i}$ are
uniformly bounded on $\Omega$. Using \eqref{lower bound in Omega} on $\Omega$
and \eqref{lower bound outside Omega} on $M_{\varepsilon} \backslash \Omega$
in the inequality \eqref{u e i integral inequality}, we have
\begin{align}\begin{split}
& \int_{- 1}^1 4 \pi \chi (\Sigma_t) \mathrm{d} t \\
\geq & \int_{M_{\varepsilon}} \left[ \frac{| \bar{\nabla}^2
u_{\varepsilon, i} |^2}{| \nabla u_{\varepsilon, i} |} + (R_g + \tfrac{3 (12 -
c^{- 1})}{6 - c^{- 1}} f_{\varepsilon, i}^2) | \nabla u_{\varepsilon, i} | \right]
\\
&\textbf{ }\quad-\int_{M_\epsilon} \tfrac{3 (8 - c^{- 1})}{6 - c^{- 1}} \langle \nabla u_{\varepsilon, i},
\nabla f_{\varepsilon, i}  \rangle \\
\geq & \int_{M_{\varepsilon}} \frac{| \bar{\nabla}^2 u_{\varepsilon, i}
|^2}{| \nabla u_{\varepsilon, i} |} - \int_{\Omega} (R_0 + c^{- 1} \Lambda +
1) | \nabla u_{\varepsilon, i} | + \int_{M\backslash \Omega} (\tfrac{c^{-
1}}{6 - c^{- 1}} f_{\varepsilon, i}^2 + 1) | \nabla u_{\varepsilon, i} | .
\label{after u e i lower bound applied}
\end{split}\end{align}
In order to extract a convergent subsequence, we rescale $u_{\varepsilon, i}$
and define
\[ \tilde{u}_{\varepsilon, i} = (\sup_{\Omega} | \nabla u_{\varepsilon, i}
   |)^{- 1} \left( u_{\varepsilon, i} - | \Omega |^{- 1} \int_{\Omega}
   u_{\varepsilon, i} \right) . \]
Obviously, $\sup_{\Omega} | \nabla \tilde{u}_{\varepsilon, i} | = 1$ and has
vanishing average on $\Omega$. Translating \eqref{after u e i lower bound
applied} using the definition of $\tilde{u}_{\varepsilon, i}$, we see that
\begin{align}\begin{split}
&\quad \tfrac{1}{\sup_{\Omega} | \nabla u_{\varepsilon, i} |} \int^1_{- 1} 4 \pi
\chi (\Sigma_t) \mathrm{d}t \\
\quad&\geq  \int_{M_{\varepsilon}} \frac{| \bar{\nabla}^2
\tilde{u}_{\varepsilon, i} |^2}{| \nabla \tilde{u}_{\varepsilon, i} |} -
\int_{\Omega} (R_0 + c^{- 1} \Lambda + 1) | \nabla \tilde{u}_{\varepsilon,
i} | + \int_{M_{\varepsilon} \backslash \Omega} (\tfrac{c^{- 1}}{6 - c^{-
1}} f_{\varepsilon, i}^2 + 1) | \nabla \tilde{u}_{\varepsilon, i} | .
\label{hkkz 2.11}
\end{split}\end{align}
We claim that $\sup_{\Omega} | \nabla u_{\varepsilon, i} |$ is uniformly
bounded below, otherwise we ended up with a trivial inequality. We defer the
proof of this claim to a later Lemma \ref{lower gradient bound}.

Since $| \nabla \tilde{u}_{\varepsilon, i} | \leq 1$ on $\Omega$, we see
then
\begin{align}\begin{split}
& \int_{M_{\varepsilon} \backslash \Omega} (\tfrac{c^{- 1}}{6 - c^{- 1}}
f_{\varepsilon, i}^2 + 1) | \nabla \tilde{u}_{\varepsilon, i} | \\
\leq & C_1 \int_{- 1}^1 4 \pi \chi (\Sigma_t) \mathrm{d}t + \int_{\Omega} (R_{0} +
c^{- 1} \Lambda + 1) | \nabla \tilde{u}_{\varepsilon, i} | \\
\leq & C_1 \int_{- 1}^1 4 \pi \chi (\Sigma_t) \mathrm{d}t + (R_0 + c^{- 1} \Lambda
+ 1) | \Omega | .
\end{split}\end{align}
Since $f_{\varepsilon, i}$ is uniformly bounded on $\Omega$, we have
\begin{align}\begin{split}
& \int_{M_{\varepsilon}} (f_{\varepsilon, i}^2 + 1) | \nabla
\tilde{u}_{\varepsilon, i} | \\
= & \left( \int_{M_{\varepsilon} \backslash \Omega} + \int_{\Omega} \right)
(f_{\varepsilon, i}^2 + 1) | \nabla \tilde{u}_{\varepsilon, i} | \\
\leq & \left( \tfrac{6 - c^{- 1}}{c^{- 1}} + 1 \right) \left( C_1
\int_{- 1}^1 4 \pi \chi (\Sigma_t) \mathrm{d}t + (R_0 + c^{- 1} \Lambda + 1) | \Omega |
\right) \\
& + | \Omega | \sup_{\Omega} (| f_{\varepsilon, i} |^2 + 1) \\
\leq & C_2, \label{hkkz 2.14}
\end{split}\end{align}
where $C_2$ is independent of $\varepsilon$ and $i$. 

Now we can follow
{\cite{hirsch-spectral-2023-arxiv}} to show that there exists a suitable limit
$\tilde{u}_{\varepsilon}$ of the sequence $\tilde{u}_{\varepsilon, i}$ when $i
\to \infty$ with the crucial property that $\nabla \tilde{u}_{\varepsilon}$ is
supported in $M_{\varepsilon}$ (in fact $M_{2 \varepsilon}$) with
integrability required by the definition of $\Lambda_c$ in \eqref{L c}. For completeness, we repeat their proof here.

Since $\tilde{u}_{\varepsilon, i}$ is of zero average in $\Omega$, we apply a
version of the Poincar{\'e} inequality on $M_{\varepsilon}$ (see
{\cite[Theorem 1]{lieb-poincare-2003}}) to conclude that $\|
\tilde{u}_{\varepsilon, i} \|_{W^{1, 1} (M_{\varepsilon})}$ is bounded below
by a constant independent of $i$. Therefore, by passing to a subsequence (in
$i$), $\tilde{u}_{\varepsilon, i}$ converges to a function
$\tilde{u}_{\varepsilon}$ in $L^p (M_{\varepsilon})$ for $p \in [1,
\tfrac{3}{2})$ as $i \to \infty$. because that $\tilde{u}_{\varepsilon, i}$
solves the elliptic spacetime Laplace equation uniform $L^p (M_{\varepsilon})$
bounds for $\tilde{u}_{\varepsilon, i}$ imply uniform control in $C^{2,
\alpha}_{\ensuremath{\operatorname{loc}}} (M_{2 \varepsilon})$ for some
$\alpha \in (0, 1)$; here we have used the fact that $f_{\varepsilon, i} \to
f_{\varepsilon}$ pointwise on the interior of $M_{2 \varepsilon}$ as $i \to
\infty$. Thus, $\tilde{u}_{\varepsilon, i}$ also converges subsequentially as
$i \to \infty$ to $\tilde{u}_{\varepsilon}$ in $C^{2, \alpha} (M_{2
\varepsilon})$ for some $\alpha \in (0, 1)$, and the limit satisfies $\Delta
\tilde{u}_{\varepsilon} + 3 f_{\varepsilon} | \nabla \tilde{u}_{\varepsilon} |
= 0$ in the interior of $M_{2 \varepsilon}$.

For the limit $\tilde{u}_{\varepsilon}$ we have the following crucial
property.

\begin{lemma}
  For $\tilde{u}_{\varepsilon}$ constructed above, we have that
  \[ | \nabla \tilde{u}_{\varepsilon} |^{\tfrac{1}{2}} \in H_0^1 (M_{2
     \varepsilon}) \]
  for almost all $\varepsilon > 0$ sufficiently small.
\end{lemma}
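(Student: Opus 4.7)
The plan is to obtain the conclusion in three steps: first establish a uniform $H^1(M_\varepsilon)$ bound on $|\nabla \tilde u_{\varepsilon,i}|^{1/2}$, then extract a weak subsequential limit, and finally show this limit vanishes outside $M_{2\varepsilon}$, which will place it in $H_0^1(M_{2\varepsilon})$.

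For the $H^1$ bound, the $L^2$ part is immediate from \eqref{hkkz 2.14}, since $\int_{M_\varepsilon} (|\nabla \tilde u_{\varepsilon,i}|^{1/2})^2 = \int_{M_\varepsilon} |\nabla \tilde u_{\varepsilon,i}| \leq C_2$. For the gradient part I would invoke the Kato inequality $|\nabla |\nabla u|| \leq |\bar\nabla^2 u|$ (regularizing $|\nabla u|$ by $(|\nabla u|^2 + \eta)^{1/2}$ and letting $\eta \to 0^+$) together with the chain-rule identity $|\nabla |\nabla u|^{1/2}|^2 = \tfrac{1}{4}|\nabla u|^{-1}|\nabla |\nabla u||^2$ to obtain
\begin{equation*}
4 \int_{M_\varepsilon} \bigl|\nabla |\nabla \tilde u_{\varepsilon,i}|^{1/2}\bigr|^2 \leq \int_{M_\varepsilon} \frac{|\bar\nabla^2 \tilde u_{\varepsilon,i}|^2}{|\nabla \tilde u_{\varepsilon,i}|},
\end{equation*}
whose right-hand side is uniformly bounded in $i$ via \eqref{hkkz 2.11} (using the lower bound on $\sup_\Omega |\nabla u_{\varepsilon,i}|$ deferred to Lemma \ref{lower gradient bound}). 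After passing to a subsequence, Rellich--Kondrachov yields $w_i := |\nabla \tilde u_{\varepsilon,i}|^{1/2} \rightharpoonup w_\varepsilon$ weakly in $H^1(M_\varepsilon)$ and strongly in $L^2(M_\varepsilon)$. The $C^{2,\alpha}_{\mathrm{loc}}(M_{2\varepsilon})$ convergence $\tilde u_{\varepsilon,i} \to \tilde u_\varepsilon$ established above then identifies $w_\varepsilon = |\nabla \tilde u_\varepsilon|^{1/2}$ pointwise on $M_{2\varepsilon}$.

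To see that $w_\varepsilon$ vanishes on $M_\varepsilon \setminus M_{2\varepsilon}$, I would exploit that by the construction of $f_{\varepsilon,i}$, on the two strips $\{\varepsilon < s < 2\varepsilon\}$ and $\{\tfrac{\pi}{b} - 2\varepsilon < s < \tfrac{\pi}{b} - \varepsilon\}$ the function $f_{\varepsilon,i}$ takes the constant values $\xi(1/i)$ and $\xi(\tfrac{\pi}{b}-1/i)$, both of which diverge in absolute value as $i \to \infty$ since $\xi = \tfrac{2(6-c^{-1})}{3(c^{-1}-4)}\phi^{-1}\phi'$ blows up at the endpoints. From \eqref{hkkz 2.14},
\begin{equation*}
\min\bigl(\xi(1/i)^2,\xi(\tfrac{\pi}{b}-1/i)^2\bigr)\int_{M_\varepsilon\setminus M_{2\varepsilon}} |\nabla \tilde u_{\varepsilon,i}| \leq \int_{M_\varepsilon\setminus M_{2\varepsilon}} f_{\varepsilon,i}^2|\nabla\tilde u_{\varepsilon,i}| \leq C_2,
\end{equation*}
so $\|w_i\|_{L^2(M_\varepsilon\setminus M_{2\varepsilon})}^2 = \int_{M_\varepsilon\setminus M_{2\varepsilon}} |\nabla \tilde u_{\varepsilon,i}| \to 0$; strong $L^2$ convergence then yields $w_\varepsilon \equiv 0$ a.e.\ on $M_\varepsilon \setminus M_{2\varepsilon}$.

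To conclude, by Sard's theorem applied to $s$, for almost every $\varepsilon > 0$ the boundary $\partial M_{2\varepsilon}$ is a disjoint union of regular level sets of $s$, hence Lipschitz. Since $w_\varepsilon \in H^1(M_\varepsilon)$ vanishes on the open neighborhood $M_\varepsilon \setminus \overline{M_{2\varepsilon}}$ of $\partial M_{2\varepsilon}$, the zero-extension of $w_\varepsilon|_{M_{2\varepsilon}}$ to $M_\varepsilon$ coincides with $w_\varepsilon$ itself and lies in $H^1(M_\varepsilon)$; the standard characterization of $H_0^1$ on Lipschitz domains (a function whose zero-extension remains in $H^1$ across the boundary must belong to $H_0^1$) then delivers $|\nabla \tilde u_\varepsilon|^{1/2} = w_\varepsilon|_{M_{2\varepsilon}} \in H_0^1(M_{2\varepsilon})$. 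The principal obstacle I anticipate is the rigorous justification of the Kato step, since $|\nabla \tilde u_{\varepsilon,i}|$ may vanish on a nontrivial critical set; this forces one to work with the regularization $(|\nabla u|^2+\eta)^{1/2}$ and invoke monotone convergence to pass to $\eta \to 0^+$ while preserving the bound on $\int |\bar\nabla^2 u|^2 / |\nabla u|$.
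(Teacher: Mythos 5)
Your overall route --- uniform $H^1(M_\varepsilon)$ bound on $|\nabla\tilde u_{\varepsilon,i}|^{1/2}$, weak subsequential limit, vanishing on the annular strips, trace/zero-extension to conclude $H_0^1$ membership --- is exactly the paper's, and your handling of the vanishing step and the final boundary argument is correct (indeed, somewhat more explicit than the paper's). But the gradient estimate in Step 1 rests on a false inequality. The quantity $\bar\nabla^2 u$ appearing in \eqref{hkkz 2.11} and in Proposition \ref{lm:hkk} is the \emph{spacetime} Hessian $\nabla^2 u + f|\nabla u|g$, and the Kato inequality $|\nabla|\nabla u||\le|\bar\nabla^2 u|$ you invoke holds for the ordinary Hessian $\nabla^2 u$, not for $\bar\nabla^2 u$: the Cauchy--Schwarz argument underlying Kato gives only $|\nabla|\nabla u|+f\nabla u|\le|\bar\nabla^2 u|$. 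The stronger form can genuinely fail even for spacetime harmonic functions; take $u=|x|^2/2$ near a point where $f|x|=-1$, so that $\Delta u+3f|\nabla u|=0$, $|\nabla|\nabla u||=1$, while $|\bar\nabla^2 u|=\sqrt{3}\,|1+f|x||=0$. Consequently your displayed inequality $4\int|\nabla|\nabla\tilde u_{\varepsilon,i}|^{1/2}|^2\le\int|\bar\nabla^2\tilde u_{\varepsilon,i}|^2/|\nabla\tilde u_{\varepsilon,i}|$ is not correct as stated.

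The repair is precisely the route the paper takes: use $\Delta u=-3f|\nabla u|$ to expand
\[
\frac{|\bar\nabla^2 u|^2}{|\nabla u|}=\frac{|\nabla^2 u+f|\nabla u|g|^2}{|\nabla u|}=\frac{|\nabla^2 u|^2}{|\nabla u|}-3f^2|\nabla u|,
\]
apply Kato to the \emph{ordinary} Hessian to get $|\nabla^2 u|^2/|\nabla u|\ge4\,|\nabla|\nabla u|^{1/2}|^2$, and then absorb the correction term by $3\int f^2|\nabla u|\le3C_2$ using \eqref{hkkz 2.14}. This yields $4\int|\nabla|\nabla\tilde u_{\varepsilon,i}|^{1/2}|^2\le\int|\bar\nabla^2\tilde u_{\varepsilon,i}|^2/|\nabla\tilde u_{\varepsilon,i}|+3C_2$, from which the uniform $H^1$ bound still follows, and the rest of your argument goes through unchanged. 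Your closing remark about regularizing $|\nabla u|$ by $(|\nabla u|^2+\eta)^{1/2}$ is a legitimate technical concern, but it is subsidiary to fixing the inequality above.
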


\begin{proof}
  Using that $c > \tfrac{1}{6}$ and \eqref{hkkz 2.11}, we see
\begin{align}\begin{split}
& \tfrac{1}{\sup_{\Omega} | \nabla u_{\varepsilon, i} |} \int^1_{- 1} 4
\pi \chi (\Sigma_t) \mathrm{d}t + \int_{\Omega} (R_0 + c^{- 1} \Lambda + 1) | \nabla
\tilde{u}_{\varepsilon, i} | \\
\geq & \int_{M_{\varepsilon}} \frac{| \bar{\nabla}^2
\tilde{u}_{\varepsilon, i} |^2}{| \nabla \tilde{u}_{\varepsilon, i} |}
\\
= & \int_{M_{\varepsilon}} \frac{| \bar{\nabla}^2 \tilde{u}_{\varepsilon,
i} + f_{\varepsilon, i} | \nabla \tilde{u}_{\varepsilon, i} |g|^2}{|
\nabla \tilde{u}_{\varepsilon, i} |} \\
\geq & \int_{M_{\varepsilon}} \frac{| \bar{\nabla}^2
\tilde{u}_{\varepsilon, i} |^2}{| \nabla \tilde{u}_{\varepsilon, i} |} - 3
f_{\varepsilon, i}^2 | \nabla \tilde{u}_{\varepsilon, i} | \\
\geq & 4 \int_{M_{\varepsilon}} | \nabla | \nabla
\tilde{u}_{\varepsilon, i} |^{\tfrac{1}{2}} |^2 - 3 C_2,
\end{split}\end{align}
  where $C_2$ is as in \eqref{hkkz 2.14}. Using Lemma \ref{lower gradient
  bound}, we conclude that $| \nabla \tilde{u}_{\varepsilon, i}
  |^{\tfrac{1}{2}}$ is bounded in $H^1 (M_{\varepsilon})$ independent of $i$,
  it has a weak subsequential limit in $H^1 (M_{\varepsilon})$ which also
  converges strongly in $L^2 (M_{\varepsilon})$.
  
  Since that $f_{\varepsilon, i}$ blows up uniformly on $M_{\varepsilon}
  \backslash M_{2 \varepsilon}$, the inequality \eqref{hkkz 2.14} implies that
  the limit function $\tilde{u}_{\varepsilon}$ satisfies $| \nabla
  \tilde{u}_{\varepsilon} |^{\tfrac{1}{2}} = 0$ almost everywhere on
  $M_{\varepsilon} \backslash M_{2 \varepsilon}$. Note that even though
  $\tilde{u}_{\varepsilon}$ may not be defined on $M_{\varepsilon} \backslash
  M_{2 \varepsilon}$, with a slight abuse of notation we still denote the
  limit of $| \nabla \tilde{u}_{\varepsilon, i} |^{\tfrac{1}{2}}$ in terms of
  $\tilde{u}_{\varepsilon}$. Moreover, for almost every $\varepsilon$, the
  boundaries $\partial_{\pm} M_{2 \varepsilon}$ is a Lipschitz manifold and
  therefore by the trace theorem, we have that $| \nabla
  \tilde{u}_{\varepsilon} |^{\tfrac{1}{2}}$ vanishes up to a set of measure
  zero on this set. It follows that $| \nabla \tilde{u}_{\varepsilon}
  |^{\tfrac{1}{2}} \in H_0^1 (M_{2 \varepsilon})$ for almost all sufficiently
  small $\varepsilon > 0$.
\end{proof}

Now we turn to the integral inequality \eqref{u e i integral inequality}.
Taking the limit as $i \to \infty$ and applying Fatou's Lemma to \eqref{u e i
integral inequality} yields
\begin{align}\begin{split}
& 4 \pi \int_{\tilde{u}_{\varepsilon}^-}^{\tilde{u}_{\varepsilon}^+} \chi
(\Sigma_t) \mathrm{d} t \\
\geq & \int_{M_{2 \varepsilon}} \frac{| \nabla^2
\tilde{u}_{\varepsilon} + f_{\varepsilon} | \nabla \tilde{u}_{\varepsilon}
|g|^2}{| \nabla \tilde{u}_{\varepsilon} |} + | \nabla
\tilde{u}_{\varepsilon} |  \left( R_g + \frac{3 (12 - c^{- 1})}{6 - c^{- 1}}
f_{\varepsilon}^2 \right) \\
& - \int_{M_{2 \varepsilon}} \frac{3 (8 - c^{- 1})}{6 - c^{- 1}}  \langle
\nabla \tilde{u}_{\varepsilon}, \nabla f_{\varepsilon} \rangle .
\label{integral u tilde epsilon}
\end{split}\end{align}
Here $\tilde{u}_{\varepsilon}^{\pm}$ are (subsequential) limit of
$\tilde{u}_{\varepsilon, i}$ on $\partial_{\pm} M_{\varepsilon}$ as $i \to
\infty$. Note that $| \nabla \tilde{u}_{\varepsilon} |^{\tfrac{1}{2}}$
vanishes almost everywhere on $M_{\varepsilon} \backslash M_{2 \varepsilon}$,
so the constants $\tilde{u}_{\varepsilon}^{\pm}$ are actually also limits of
$\tilde{u}_{\varepsilon, i}$ on $\partial_{\pm} M_{2 \varepsilon}$. And they
are uniformly bounded independent of $\varepsilon$ due to Lemma \ref{lower
gradient bound}.

We estimate the first two terms on the above inequality. Recall that we have
\begin{align}\begin{split}
& | \nabla^2  \tilde{u}_{\varepsilon} + f_{\varepsilon} | \nabla
\tilde{u}_{\varepsilon} |g|^2 - \tfrac{3}{2} (3| \nabla | \nabla
\tilde{u}_{\varepsilon} | + f_{\varepsilon} \nabla \tilde{u}_{\varepsilon}
|^2) \\
\geq & \tfrac{1}{4} (\nabla_{1 1} \tilde{u}_{\varepsilon} - \nabla_{2
2} \tilde{u}_{\varepsilon})^2 + \tfrac{1}{2} (| \nabla_{1 2}
\tilde{u}_{\varepsilon} |^2 + | \nabla_{13} \tilde{u}_{\varepsilon} |^2 + |
\nabla_{2 3} \tilde{u}_{\varepsilon} |^2) \label{remark 4.4}
\end{split}\end{align}
from {\cite[Remark 4.4]{hirsch-rigid-2022-arxiv}} where $\{e_1, e_2 \}$
represents an orthonormal frame on the level set of $u$ and $e_3 = | \nabla
\tilde{u}_{\varepsilon} |^{- 1} \nabla \tilde{u}_{\varepsilon}$. So
\begin{align}\begin{split}
& \int_{M_{2 \epsilon}} \frac{| \nabla^2  \tilde{u}_{\varepsilon} +
f_{\varepsilon} | \nabla \tilde{u}_{\varepsilon} |g|^2}{| \nabla
\tilde{u}_{\varepsilon} |} + R_g | \nabla \tilde{u}_{\varepsilon} |
\\
\geq & \int_{M_{2 \epsilon}} \frac{3 | \nabla | \nabla
\tilde{u}_{\varepsilon}  | + f_{\varepsilon} \nabla \tilde{u}_{\varepsilon}
|^2}{2 | \nabla \tilde{u}_{\varepsilon} |} + R_g | \nabla
\tilde{u}_{\varepsilon} | \\
= & \int_{M_{2 \epsilon}} 6 | \nabla | \nabla \tilde{u}_{\varepsilon}
|^{\frac{1}{2}} |^2 + 6 \langle \nabla | \nabla \tilde{u}_{\varepsilon}
|^{\frac{1}{2}}, f_{\varepsilon} | \nabla \tilde{u}_{\varepsilon} |^{-
\frac{1}{2}} \nabla \tilde{u}_{\varepsilon} \rangle + \left( R_g + \frac{3}{2}
f_{\varepsilon}^2 \right)  | \nabla \tilde{u}_{\varepsilon} | \\
= & \int_{M_{2 \epsilon}} (6 - c^{- 1})  \left| \nabla | \nabla
\tilde{u}_{\varepsilon} |^{\frac{1}{2}} + \frac{3}{6 - c^{- 1}}
f_{\varepsilon} | \nabla \tilde{u}_{\varepsilon} |^{- \frac{1}{2}} \nabla
\tilde{u}_{\varepsilon} \right|^2 + c^{- 1} | \nabla | \nabla
\tilde{u}_{\varepsilon} |^{\frac{1}{2}} |^2 \\
& + \int_{M_{2 \varepsilon}} R_g | \nabla \tilde{u}_{\varepsilon} | + \left(
\frac{3}{2} - \frac{9}{6 - c^{- 1}} \right) f_{\varepsilon}^2 | \nabla
\tilde{u}_{\varepsilon} | \\
\geq & \int_{M_{2 \epsilon}} c^{- 1} \Lambda_c  | \nabla
\tilde{u}_{\varepsilon} | + \left( \frac{3}{2} - \frac{9}{6 - c^{- 1}}
\right) f_{\varepsilon}^2 | \nabla \tilde{u}_{\varepsilon} | .
\end{split}\end{align}
In the last inequality, we have used \eqref{L c}. Rewriting the above
slightly, we see that
\begin{align}\begin{split}
& \int_{M_{2 \varepsilon}} \frac{| \nabla^2  \tilde{u}_{\varepsilon} +
f_{\varepsilon} | \nabla \tilde{u}_{\varepsilon} |g|^2}{| \nabla
\tilde{u}_{\varepsilon} |} + R_g | \nabla \tilde{u}_{\varepsilon} |
\\
\geq & \int_{M_{2 \varepsilon}} c^{- 1} \Lambda | \nabla
\tilde{u}_{\varepsilon} | + \left( \frac{3}{2} - \frac{9}{6 - c^{- 1}}
\right) f_{\varepsilon}^2 | \nabla \tilde{u}_{\varepsilon} | + \int_{M_{2
\varepsilon}} c^{- 1} (\Lambda_c - \Lambda) | \nabla \tilde{u}_{\varepsilon}
| .
\end{split}\end{align}
Using the above estimate in \eqref{integral u tilde epsilon}, we arrive that
\begin{align}\begin{split}
& 4 \pi \int_{\tilde{u}_{\varepsilon}^-}^{\tilde{u}_{\varepsilon}^+} \chi
(\Sigma_t) \mathrm{d} t \\
\geq & \int_{M_{2 \varepsilon}} \left( c^{- 1} \Lambda + \frac{9 (8 -
c^{- 1})}{2 (6 - c^{- 1})} f_{\varepsilon}^2 \right) | \nabla
\tilde{u}_{\varepsilon} | - \frac{3 (8 - c^{- 1})}{6 - c^{- 1}}  \langle
\nabla \tilde{u}_{\varepsilon}, \nabla f_{\varepsilon} \rangle \\
& + \int_{M_{2 \varepsilon}} c^{- 1} (\Lambda_c - \Lambda) | \nabla
\tilde{u}_{\varepsilon} | \\
& + \int_{M_{2 \epsilon}} (6 - c^{- 1})  \left| \nabla | \nabla
\tilde{u}_{\varepsilon} |^{\frac{1}{2}} + \frac{3}{6 - c^{- 1}}
f_{\varepsilon} | \nabla \tilde{u}_{\varepsilon} |^{- \frac{1}{2}} \nabla
\tilde{u}_{\varepsilon} \right|^2 \\
\geq & \int_{M_{2 \varepsilon}} c^{- 1} \Lambda \left( 1 + \frac{9}{4
\alpha^2} f_{\varepsilon}^2 - \frac{3}{2 \alpha^2} | \nabla f_{\varepsilon}
| \right)  | \nabla \tilde{u}_{\varepsilon} | \\
& + \int_{M_{2 \varepsilon}} c^{- 1} (\Lambda_c - \Lambda) | \nabla
\tilde{u}_{\varepsilon} | \\
& + \int_{M_{2 \varepsilon}} (6 - c^{- 1})  \left| \nabla | \nabla
\tilde{u}_{\varepsilon} |^{\frac{1}{2}} + \frac{3}{6 - c^{- 1}}
f_{\varepsilon} | \nabla \tilde{u}_{\varepsilon} |^{- \frac{1}{2}} \nabla
\tilde{u}_{\varepsilon} \right|^2, \label{hkkz 2.18}
\end{split}\end{align}
where $\alpha = \sqrt{\frac{\Lambda (6 - c^{- 1})}{2 c (8 - c^{- 1})}}$.

Before we turn to \text{{\bfseries{Step 3}}} of the proof, we give a lemma
used in \text{{\bfseries{Step 2}}}.

\begin{lemma}
  \label{lower gradient bound}Let $u_{\varepsilon, i}$ be as \eqref{u e i
  spacetime harmonic} and $\Omega$ be as above, then
  \[ \sup_{\Omega} | \nabla u_{\varepsilon, i} | \geq \tfrac{1}{C_1} > 0,
  \]
  where $C_1$ does not depend on $\varepsilon$ and $i$.
\end{lemma}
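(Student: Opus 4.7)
The plan is to argue by contradiction. Assume that along some sequence $(\varepsilon_j, i_j)$ with $\varepsilon_j \to 0$ and $i_j \to \infty$, one has $m_j := \sup_\Omega |\nabla u_{\varepsilon_j, i_j}| \to 0$. Because $\Omega = M_{\varepsilon_0}$ is a fixed compact connected set of bounded $g$-diameter and $|u_{\varepsilon, i}| \leq 1$, integration of $du_{\varepsilon_j, i_j}$ along paths in $\Omega$ forces $u_{\varepsilon_j, i_j}|_\Omega$ to converge uniformly, along a subsequence, to a constant $\ell \in [-1, 1]$.

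To reach a contradiction, I would rule out the cases $\ell < 1$ and $\ell > -1$ separately by a barrier argument on the outer annular regions. Focus on $A_+ := \{x : \varepsilon_0 \leq s(x) \leq \tfrac{\pi}{b} - \varepsilon_j\}$, where $u_{\varepsilon_j, i_j}$ solves \eqref{u e i spacetime harmonic} with $u = \ell + o(1)$ on $\{s = \tfrac{\pi}{b} - \varepsilon_0\}$ and $u = 1$ on $\partial_+ M_{\varepsilon_j}$. Suppose $\ell < 1$. The coefficient $f_{\varepsilon_j, i_j} = \xi \circ h_{\varepsilon_j, i_j}$ is nonnegative on $A_+$ and blows up uniformly near $\partial_+ M_{\varepsilon_j}$ as $j \to \infty$, so $u_{\varepsilon_j, i_j}$ is superharmonic on $A_+$. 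The strategy is to construct a radial subsolution $\psi(s)$ on $A_+$ with $\psi(\tfrac{\pi}{b} - \varepsilon_0) \leq \ell + o(1)$, $\psi(\tfrac{\pi}{b} - \varepsilon_j) \leq 1$, $\Delta(\psi \circ s) + 3 f_{\varepsilon_j, i_j} |\nabla(\psi \circ s)| \geq 0$, and with a uniform lower bound $\psi'(\tfrac{\pi}{b} - \varepsilon_0) \geq c^*(1 - \ell) > 0$ depending only on geometric constants. The comparison principle together with the Hopf lemma at $\{s = \tfrac{\pi}{b} - \varepsilon_0\}$ then give $|\nabla u_{\varepsilon_j, i_j}|(p_+) \geq c^*(1 - \ell)$ at some $p_+ \in \{s = \tfrac{\pi}{b} - \varepsilon_0\} \subset \Omega$, contradicting $m_j \to 0$. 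Hence $\ell = 1$; running the symmetric argument on the lower annulus with \eqref{technical0} forces $\ell = -1$, which is incompatible.

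The principal obstacle is the construction of the radial subsolution $\psi$. A radial ansatz reduces the subsolution condition to the first-order ODE inequality $\psi'' + F_j(s)\psi' \geq 0$ with $F_j(s) = (\Delta s + 3 f_{\varepsilon_j, i_j} \sqrt{g^{ss}}) / g^{ss}$, and the key cancellation is that the singular term $3 f_{\varepsilon_j, i_j} \sqrt{g^{ss}}$ near $s = \tfrac{\pi}{b}$ is matched by the leading singularity of $\Delta s$ encoded in \eqref{technical1}, leaving $F_j$ uniformly integrable on $A_+$. The bound $|\nabla s| \leq 1$ coming from $g \geq g_0$ then ensures that $\psi \circ s$ is an honest subsolution of the quasilinear equation on $(M, g)$ rather than only of the one-dimensional model, after which the comparison reduces to the standard maximum principle for the elliptic inequality in ``superharmonic'' form $\Delta u \leq 0$. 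The conditions \eqref{technical0} and \eqref{technical1} are essential at exactly this step, which matches the paper's remark that these technical hypotheses are imposed precisely to make Lemma \ref{lower gradient bound} go through.
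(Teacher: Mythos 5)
The paper's proof is more elementary and, I think, cleaner than yours, and your proof as written has a genuine gap in the final step. The paper does not argue by contradiction at all: it constructs a linear barrier $w = -(1+l\varepsilon)+ls$ (with $l = 3/r_1$), verifies that $w$ is a supersolution of the spacetime Laplace operator on $\{\varepsilon \le s \le r_1\}$ using \eqref{technical0}, and deduces from the comparison principle that $u_{\varepsilon,i} \le w$, hence $u_{\varepsilon,i} \le -1/2$ on $\partial_-\Omega$ once $\varepsilon_0$ is chosen small. Symmetrically $u_{\varepsilon,i} \ge 1/2$ on $\partial_+\Omega$. Then a geodesic $\gamma$ of unit speed in $\bar\Omega$ joining the two boundary components has length $t_0 = \mathrm{dist}_g(\partial_-\Omega,\partial_+\Omega)$, a fixed quantity independent of $\varepsilon,i$, and the mean value theorem applied to $u_{\varepsilon,i}\circ\gamma$ gives $\sup_\Omega |\nabla u_{\varepsilon,i}| \ge 1/t_0$. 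Note that the MVT is applied to the full $g$-gradient along a $g$-geodesic \emph{inside} $\Omega$, not to the $s$-directional derivative.

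The gap in your proposal is at the Hopf-lemma step, and it has two facets. First, the claimed bound $|\nabla u_{\varepsilon_j,i_j}|(p_+) \ge c^*(1-\ell)$ at a point of $\{s=\frac{\pi}{b}-\varepsilon_0\}$ cannot follow from a radial barrier $\psi(s)$, because the best such an argument yields is $\partial_\nu u \ge \partial_\nu(\psi\circ s) = \psi'(s)\,|\nabla s|$ at the contact point. The hypothesis $g\ge g_0$ only gives $|\nabla s| \le 1$; there is \emph{no} lower bound on $|\nabla s|$, so the factor $|\nabla s|(p_+)$ can be arbitrarily small and the quantitative conclusion fails. Second, for the Hopf lemma to apply at $\partial_+\Omega$ you need $u - \psi$ to attain a minimum there; but with $\psi(\frac{\pi}{b}-\varepsilon_0) = \ell + o(1)$ and $\psi(\frac{\pi}{b}-\varepsilon_j) = 1$, the difference $u-\psi$ vanishes on the \emph{outer} boundary $\partial_+ M_{\varepsilon_j}$ (which is not in $\Omega$) while it is $\approx 2\delta_j>0$ on $\partial_+\Omega$, so the boundary minimum sits on the wrong component. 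You could shift $\psi$ to force a touching point on $\partial_+\Omega$, but this does not repair the $|\nabla s|$ issue above. For the same reason, replacing Hopf by a mean-value argument in the annulus $A_+$ only controls $\sup_{A_+}|\nabla u|$, and the realizing point need not lie in $\Omega$. The paper circumvents all of this by using the barrier solely to pin down the \emph{values} of $u$ on $\partial_\pm\Omega$ and then running the MVT inside $\Omega$ along a genuine $g$-geodesic, where no factor of $|\nabla s|$ appears.
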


\begin{proof}
  We consider a region $M_{\varepsilon'}$ slightly larger than $\Omega =
  M_{\varepsilon_0}$. That is, we pick an $\varepsilon'$ sufficiently close to
  $\varepsilon_0$ with $2 \varepsilon < \varepsilon' < \varepsilon_0$. By the
  maximum principle, $|u_{\varepsilon, i} | \leq 1$ \ in
  $M_{\varepsilon}$. Then by standard interior elliptic estimates in
  $M_{\varepsilon'}$, we have that $u_{\varepsilon, i}$ are in $C^{2, \mu}
  (\bar{\Omega})$ for some $\mu \in (0, 1)$. We take a geodesic segment
  $\gamma (t)$ in $\bar{\Omega}$ realizing the distance between two boundary
  components of $\Omega$ with $t \in [0, t_0]$ where $t_0
  =\ensuremath{\operatorname{dist}} (\partial_- \Omega, \partial_+ \Omega)$.
  
  For some $t' \in [0, t_0]$,
  \[ \tfrac{u (\gamma (t_0)) - u (\gamma
     (0))}{\ensuremath{\operatorname{dist}} (\partial_- \Omega, \partial_+
     \Omega)} = \tfrac{u (\gamma (t_0)) - u (\gamma (0))}{t_0 - 0} =
     \tfrac{\mathrm{d}}{\mathrm{d} t} u_{\varepsilon, i} (\gamma (t)) |_{t =
     t'}. \]
  Since also $\tfrac{\mathrm{d}}{\mathrm{d} t} u_{\varepsilon, i} (\gamma (t))
  = \langle \nabla u_{\varepsilon, i}, \gamma' \rangle$ for every $t \in [0,
  t_0]$ and that $\gamma$ is of unit speed, we then conclude that
  \[ \sup_{\Omega} | \nabla u_{\varepsilon, i} | \geq
     \tfrac{\mathrm{d}}{\mathrm{d} t} u_{\varepsilon, i} (\gamma (t)) |_{t =
     t'} = \tfrac{u (\gamma (t_0)) - u (\gamma
     (0))}{\ensuremath{\operatorname{dist}} (\partial_- \Omega, \partial_+
     \Omega)} . \]
  If we can show that $u_{\varepsilon, i} \geq \tfrac{1}{2}$ on
  $\partial_+ \Omega$ and $u_{\varepsilon, i} \leq - \tfrac{1}{2}$ on
  $\partial_- \Omega$, we are done.

To proceed, we construct a supersolution near $\{s = 0\}$. Let
\[ w := w_{\varepsilon, i} = - (1 + l \varepsilon) + ls \]
where $l$ is a positive constant to be determined. The function $w$ agrees
with $u_{\varepsilon, i}$ on $\{s = \varepsilon\}$. For some number $r_1 \in
(2 \varepsilon, r_0)$, on the set $\{s = r_1 \}$, we have
\[ w = - 1 - l \varepsilon + l r_1 . \]
We choose $l = \tfrac{3}{r_1}$, then $w > 2 - l \varepsilon > 1$ on $\{s = r_1
\}$ if $\varepsilon$ is sufficiently small. For a function $u$ of $s$,
\begin{align}
  & \Delta w + 3 f_{\varepsilon, i} | \nabla w| \\
  = & \tfrac{1}{\sqrt{g}} \sum_{i, j = 1}^3 \partial_i  (g^{ij}  \sqrt{g}
  \partial_j w) + 3 f_{\varepsilon, i}  \sqrt{g^{s s}} l \\
  = & \tfrac{1}{\sqrt{g}} \sum_{i = 1}^3 \partial_i  (g^{is}  \sqrt{g}) w' +
  g^{ss} w'' + 3 f_{\varepsilon, i}  \sqrt{g^{s s}} l \\
  = & l (\Delta s + 3 f_{\varepsilon, i} \sqrt{g^{s s}} l) . 
\end{align}
By construction of $f_{\epsilon,i}$, for sufficiently large $i$, we see $f_{\varepsilon, i}
\leqslant f$, so
\[ \Delta w + 3 f_{\varepsilon, i} | \nabla w| \leqslant l (\Delta s + 3 f
   \sqrt{g^{s s}}) \leqslant - \tfrac{c_1 l}{s} + O (1)\]
   on $[\epsilon, r_1]$ due to \eqref{technical0}.
We see that $w$ is a supersolution on $\{x \in M : \varepsilon \leqslant s \leqslant r_1 \}$ choosing $r_1$ and $\epsilon$ small, so by maximum principle $u_{\varepsilon, i} \leqslant w$ on \(\epsilon \leq s \leq r_1\). In particular on $\partial_- \Omega = \{ s=\epsilon_0\}$, we have
\[ u_{\varepsilon, i} \leqslant - 1 - l \varepsilon + l \varepsilon_0 . \]
Now choosing smaller $\varepsilon$ and $\varepsilon_0$, we arrive that
$u_{\varepsilon, i} \leqslant - \tfrac{1}{2}$ on $\partial_- \Omega$. We
follow the same procedures, we can arrive that $u_{\varepsilon, i} \geqslant
\tfrac{1}{2}$ on $\partial_+ \Omega$.
\end{proof}

\

\text{{\bfseries{Step 3}}}. Limiting behavior of $\tilde{u}_{\varepsilon}$ as
$\varepsilon \to 0$.

\

To proceed, we shall investigate the limit of $\tilde{u}_{\varepsilon}$ as
$\varepsilon \to 0$. First note that applying Fatou's lemma to equation
\eqref{hkkz 2.14} yields
\[ \int_{M_{2 \varepsilon}} | \nabla \tilde{u}_{\varepsilon} | \leq
   \liminf_{i \to \infty}  \int_{M_{2 \varepsilon}} | \nabla
   \tilde{u}_{\varepsilon, i} | \leq C_1 . \]
Since $\tilde{u}_{\varepsilon, i}$ has vanishing average on $\Omega$, the same
holds true for $\tilde{u}_{\varepsilon}$, and thus utilizing again the
Poincar{\'e} inequality {\cite[Theorem 1]{lieb-poincare-2003}} we obtain
uniform $W^{1, 1} (M_{2 \varepsilon})$ bounds for $\tilde{u}_{\varepsilon}$.
By passing to a subsequence, $\tilde{u}_{\varepsilon} \to u$ in
$L^p_{\ensuremath{\operatorname{loc}}} (M)$ for any $p \in [1, \frac{3}{2})$
for some $u \in L^p_{\ensuremath{\operatorname{loc}}} (M)$ where $M =
\cup_{\varepsilon} M_{2 \varepsilon}$.

As before, since $\tilde{u}_{\varepsilon}$ satisfies the elliptic spacetime
Laplacian, we may bootstrap to find subsequential convergence
$\tilde{u}_{\varepsilon} \to u$ in $C^{2,
\mu}_{\ensuremath{\operatorname{loc}}} (M )$, for some $\mu \in (0, 1)$.
Furthermore, $\Delta u + 3 f | \nabla u| = 0$ on $M$ with
\[ f (x) = \xi (s (x)) \label{definition of f} \]
using the definition as in \eqref{f e j}. Again using that $g \geq
\bar{g}$ similarly as in \eqref{gradient f e i}, we see that
\[ - | \nabla f| \geq - \xi' \]
which leads to
\begin{equation}
  1 + \frac{9}{4 \alpha^2} f^2 - \frac{3}{2 \alpha^2}  | \nabla f| \geq 1
  + \tfrac{9}{4 \alpha^2} \xi^2 - \tfrac{3}{2 \alpha^2} \xi' = 2 c \Lambda^{-
  1} \phi^{- 2} . \label{ode variant}
\end{equation}
Furthermore, taking the limit of \eqref{hkkz 2.18} with Fatou's lemma implies
that
\begin{align}\begin{split}
& 4 \pi \int^{u^+}_{u_-} \chi (\Sigma_t) \mathrm{d} t \\
\geq & \int_M c^{- 1} \Lambda \left( 1 + \frac{9}{4 \alpha^2} f^2 -
\frac{3}{2 \alpha^2} | \nabla f| \right)  | \nabla u| \\
& + \int_M c^{- 1} (\Lambda_c - \Lambda) | \nabla u| \\
& + \int_M (6 - c^{- 1})  \left| \text{ } \nabla | \nabla u|^{\frac{1}{2}}
+ \frac{3}{6 - c^{- 1}} f| \nabla u|^{- \frac{1}{2}} \nabla u \right|^2,
\end{split}\end{align}
where $u^{\pm}$ are subsequential limits of $\tilde{u}_{\varepsilon}^{\pm}$ on
$\partial_{\pm} M$ and $\Sigma_t = \{x \in M : u (x) = t\}$. We have
established \eqref{ode variant}, with $\Lambda > 0$, we see
\begin{align}\begin{split}
& 4 \pi \int^{u^+}_{u_-} \chi (\Sigma_t) \mathrm{d} t \\
\geq & 2 \int_M \phi^{- 2} | \nabla u| + \int_M c^{- 1} (\Lambda_c -
\Lambda) | \nabla u| \\
& + \int_M (6 - c^{- 1})  \left| \text{ } \nabla | \nabla u|^{\frac{1}{2}}
+ \frac{3}{6 - c^{- 1}} f| \nabla u|^{- \frac{1}{2}} \nabla u \right|^2 .
\end{split}\end{align}
Applying a linear function on $u$ we may assume that $u^+ = 1$ and $u_- = -
1$, we finish the proof of Theorem \ref{final integral estimate}.

\

\

\subsection{Proof of Theorem \ref{harmonic main}}

Comparing with Theorem \ref{final integral estimate}, only the condition
\[ \Lambda_c \geq \Lambda > 0 \]
is imposed in Theorem \ref{harmonic main}.

\begin{proof}[Proof of Theorem \ref{harmonic main}]
  Following from Theorem \ref{final integral estimate} and $\Lambda_c
  \geq \Lambda > 0$, we obtain that
  \begin{equation}
    4 \pi \int_{- 1}^1 \chi (\Sigma_t) \mathrm{d} t \geq \int_M 2 \phi^{-
    2} | \nabla u| \mathrm{d} v \geq \int_M 2 \phi^{- 2} | \nabla u|
    \mathrm{d} \bar{v} . \label{apply lower bound on eigenvalue}
  \end{equation}
  Here $\mathrm{d} v$ and $\mathrm{d} \bar{v}$ be the three dimensional
  Hausdorff measure with respect to the metrics $g$ and $\bar{g}$ which we
  have to write it explicitly. In the second inequality, we used $g \geq
  \bar{g}$.
  
  We show in the following that the above inequality is actually an
  equality. Using the coarea formula on the right, we see that
  \[ 4 \pi \int_{- 1}^1 \chi (\Sigma_t) \mathrm{d} t \geq \int_{- 1}^1
     \left( \int_{\Sigma_t} 2 \phi^{- 2} \mathrm{d} \bar{\sigma} \right)
     \mathrm{d} t, \]
  where $\mathrm{d} \bar{\sigma}$ is the two dimensional Hausdorff measure on
  $\Sigma_t$ with respect to the induced metric from $\bar{g}$.
  
  Let $p$ be the projection of $\Sigma_t$ to the $\mathbb{S}^2$ factor in $M
  =\mathbb{S}^2 \times (0, \tfrac{\pi}{b})$. Since $\Sigma_t$ is the $t$-level set of the function $u$ which is the limit
of functions taking constant values in two level sets of $s$, so $\Sigma_t$
must have a connected component which intersects $\{(z,s)\in  \mathbb{S}^2\times(0, \tfrac{\pi}{b})\}$ for each fixed $z$. We now rename $\Sigma_t$ to be this component, then for $x \in \Sigma_t$ is
  represented by $(p (x), s (x))$ and we may parametrize each $\Sigma_t$ by
  $\mathbb{S}^2$ by
  \[ (z, s (x)), p (x) = z \in \mathbb{S}^2, x \in \Sigma_t . \]
  Locally we can represent a small neighborhood of $x \in \Sigma_t$ as a graph
  over $\mathbb{S}^2$ near $z$ in $M =\mathbb{S}^2 \times (0,
  \tfrac{\pi}{b})$. Hence
  \[ \int_{\Sigma_t} \phi^{- 2} (s (x)) \mathrm{d} \bar{\sigma} \geq
     \int_{\mathbb{S}^2} \phi^{- 2} \phi^2 \mathrm{d}
     \bar{\sigma}_{\mathbb{S}^2} = \int_{\mathbb{S}^2} \mathrm{d}
     \bar{\sigma}_{\mathbb{S}^2} = 2 \pi, \]
  where $\mathrm{d} \bar{\sigma}_{\mathbb{S}^2}$ is the standard two
  dimensional Hausdorff measure on $\mathbb{S}^2$ and
  \[ 4 \pi \int_{- 1}^1 \chi (\Sigma_t) \mathrm{d} t \geq \int_{- 1}^1 4
     \pi \mathrm{d} t. \]
  Since we have the simple fact that $\chi (\Sigma_t) \leq 2 = \chi
  (\mathbb{S}^2)$ for all $t$, so the inequality \eqref{apply lower bound on
  eigenvalue} is actually an equality.
  
  Following directly from \eqref{final crucial integral inequality}, we see
  that
  \begin{equation}
    \text{ } \nabla | \nabla u|^{\frac{1}{2}} + \frac{3}{6 - c^{- 1}} f |
    \nabla u|^{- \frac{1}{2}} \nabla u = 0 \label{excess of second order}
  \end{equation}
  holds almost everywhere on $M$. Integrating this equation along curves
  starting from regular points of $u$ shows that $\nabla u$ is nonvanishing
  globally on $M$.
  
  Let $e_3 = | \nabla u|^{- 1} \nabla u$ and we extend $e_3$ to an orthonormal
  frame $\{e_1, e_2, e_3 \}$. So we see that $e_1$ and $e_2$ are parallel to
  level sets of $u$.
  
  Tracing back the proof in Theorem \ref{final integral estimate} (taking
  limits in appropriate places), we find that the inequalities
  \begin{equation}
    - | \nabla f| \geq - \xi', - \langle \nabla f, \nabla u \rangle
    \geq - | \nabla f|  | \nabla u| \label{f u relation}
  \end{equation}
  are also in fact equalities. Since $f = \xi (s)$, we know then that $|
  \nabla s| = 1$ and $u$ is a function of $s$. So every level set of $u$ is
  just a coordinate 2-sphere.
  
  We also have that after taking limits the inequality \eqref{remark 4.4} is
  an equality which gives
  \begin{equation}
    \nabla_{1 1} u = \nabla_{2 2} u, \nabla_{1 2} u = \nabla_{1 3} u =
    \nabla_{2 3} u = 0. \label{hessian of u}
  \end{equation}
  Together with the fact that $| \nabla s| = 1$, we know that $(M, g)$ is a
  warped product with
  \begin{equation}
    g = \mathrm{d}s^2+ \varphi (s)^2 g_0  \label{preliminary metric}
  \end{equation}
  where $g_0$ is some metric on $\mathbb{S}^2$ and $\varphi$ is a positive
  continuously differentiable function on $(0, \tfrac{\pi}{b})$.
  
  We note that $f$ is monotonically increasing as $s$ increases, the second
  inequality of \eqref{f u relation} being an equality implies that $u$ is
  also monotonically increasing as $s$ increases. So
  \[ u' (s) = \partial_s u = \nabla_s u = \nabla_3 u = | \nabla u|, \]
  and from \eqref{excess of second order}, we conclude that
  \begin{equation}
    \nabla_3 | \nabla u| = \partial_s | \nabla u| = - \tfrac{6}{6 - c^{- 1}} f
    | \nabla u| . \label{s derivative of gradient u}
  \end{equation}
  This ordinary differential equation of $| \nabla u|$ is easily solvable as
  follows
  \[ u' (s) = | \nabla u| = \exp \left(- \tfrac{6}{6 - c^{- 1}} \int^s f (\tau)
     \mathrm{d} \tau\right) + c_0 \]
   where $c_0$ is a constant such that $u'(0)=0$. Since $f= \tfrac{2(6-c^{-1})}{3(c^{-1}-4)}\phi^{-1}\phi'$, we see then $|\nabla u| = c_1 \phi^{4/(4-c^{-1})}$ for some positive constant $c_1$.
   It follows from \eqref{s derivative of
  gradient u} and that
  \begin{equation}
    \nabla_{3 3} u = \tfrac{1}{| \nabla u|} \langle \nabla_3 \nabla u, \nabla
    u \rangle = \nabla_3 | \nabla u| = - \tfrac{6}{6 - c^{- 1}} f | \nabla u|
    . \label{hess 33}
  \end{equation}
  Let $H$ be the mean curvature of each level set with respect to the unit
  normal $e_3 = \partial_s$. By \eqref{preliminary metric}, $H = 2 \varphi^{-
  1} \varphi'$ and so
  \[ H = \tfrac{1}{| \nabla u|} (\Delta u - \nabla_{3 3} u) = \tfrac{3 c^{- 1}
     - 12}{6 - c^{- 1}} f = 2 \varphi^{- 1} \varphi' \]
  where we have used \eqref{hess 33} and that $u$ is spacetime harmonic
  function. By the definition of $f$ in \eqref{definition of f}, $\varphi =
  \phi$.
  
  Now it remains to show that $g_0$ is the standard round metric.
  
  First, we calculate the scalar curvature $R_g$ of $(M, g)$ in terms of
  $\phi$. Using the first variation of the mean curvature in the direction
  $\partial_s$, we see that
  \[ H' = -\ensuremath{\operatorname{Ric}} (\partial_s) - |A|^2 \]
  where $A$ is the second fundamental form of the level set. Using the classic
  Schoen-Yau rewrite (essentially traces of the Gauss equation; see
  {\cite{schoen-proof-1979}}), we know that
\begin{align}\begin{split}
- 2 H' & = R_g - 2 \phi^{- 2} K + |A|^2 + H^2,
\end{split}\end{align}
  where $K$ is the Gauss curvature of $(\mathbb{S}^2, g_0)$. Recall that $(M,
  g)$ is a warped product, so $|A|^2 = \tfrac{1}{2} H^2$. Hence,
  \begin{equation}
    R_g = 2 \phi^{- 2} K - 2 (\tfrac{2 \phi'}{\phi})' - \tfrac{3}{2} (\tfrac{2
    \phi'}{\phi})^2 . \label{scalar curvature with unkown K}
  \end{equation}
  Since $(M, g)$ is a warped product, so the Laplacian $\Delta$ for functions
  of $s$ is quite easy to calculate. The function $| \nabla u|^{\tfrac{1}{2}}$
  is a function of $s$, so
  \[ \Delta | \nabla u|^{\tfrac{1}{2}} = (| \nabla u|^{\tfrac{1}{2}})'' +
     \tfrac{2 \phi'}{\phi} (| \nabla u|^{\tfrac{1}{2}})', \]
  and the operator $L = - \Delta + c R_g - \Lambda$ applied on $| \nabla
  u|^{\tfrac{1}{2}}$ is given by
\begin{align}\begin{split}
& L (| \nabla u|^{\tfrac{1}{2}}) \\
= & - (| \nabla u|^{\tfrac{1}{2}})'' - \tfrac{2 \phi'}{\phi} (| \nabla
u|^{\tfrac{1}{2}})' \\
& + c (2 \phi^{- 2} K - 2 (\tfrac{2 \phi'}{\phi})' - \tfrac{3}{2}
(\tfrac{2 \phi'}{\phi})^2) | \nabla u|^{\tfrac{1}{2}} - \Lambda | \nabla
u|^{\tfrac{1}{2}} \\
= & 2 c (K - 1) \phi^{- 2} | \nabla u|^{\tfrac{1}{2}} .
\end{split}\end{align}
  Again, we trace back the Proof of Theorem \ref{final integral estimate}, we
  have that $\Lambda_c = \Lambda$ and that the equality holds in
  \[ \int_M | \nabla | \nabla u|^{\tfrac{1}{2}} |^2 + c R_g | \nabla u|
     \geq \Lambda_c \int_M | \nabla u| . \]
  We have already shown that $| \nabla u|^{\tfrac{1}{2}} \in H^1 (M)$, by
  definition of $\Lambda_c$ in \eqref{L c}, it follows that $L (| \nabla
  u|^{\tfrac{1}{2}}) = 0$ which gives that $K$ must be identically 1. Hence
  $(\mathbb{S}^2, g_0)$ is a standard 2-sphere. Now we conclude the proof of
  Theorem \ref{harmonic main}.
\end{proof}

\subsection{Model metrics of further generalizations}\label{further}

We give a description of the model metric and related functions
in the case of $\Lambda_{c,\mu}$ corresponding to Theorem \ref{thm1-mu} in dimension 3.

Let $M = (0,t_0)\times \mathbb{S}^2 $ with a warped metric $g_1 =
 \mathrm{d} t^2+\phi (t)^2 g_{\mathbb{S}^2} $ where $g_{\mathbb{S}^2}$ is the
standard round metric on the 2-sphere.
Let $\nabla$ be the Levi-Civita connection, $\Delta$ be the Laplacian with
respect to the metric $g_1$.
The functions $\mu$, $u$, $f$ only depend on $t$ and
satisfy the following relations:
\begin{enumerate}
  \item $f = -\tfrac{2(6-c^{-1})}{3(4-c^{-1})}\phi'/\phi;$
  \item $\Delta u + 3 f | \nabla u| = 0$;
  \item $c^{- 1} \mu + \tfrac{9 (8 - c^{- 1})}{2 (3 - c^{- 1})} f^2 - \tfrac{3
  (8 - c^{- 1})}{6 - c^{- 1}} f' = 2 \phi^{- 2}$;
  \item $\nabla | \nabla u|^{\tfrac{1}{2}} + \tfrac{3}{6 - c^{- 1}} f | \nabla
  u|^{-\tfrac{1}{2}} \nabla u = 0$;
   \item \(
  - \Delta | \nabla u|^{\tfrac{1}{2}} + c R_g | \nabla u|^{\tfrac{1}{2}} = \mu |
  \nabla u|^{\tfrac{1}{2}} 
\);
  \item $|\nabla u |^{1/2}$ is a constant multiple of $ \phi^{2/(4-c^{-1})}$.
\end{enumerate}
One can use the formulas in Appendix \ref{App} to check that the last three items
can actually be inferred from the first three. 

\appendix

\section{Details of calculations}\label{App}

In this appendix, we discuss a few properties of the metrics
\(g_1= \mathrm{d}t^2 + \phi(t)^2 g_{\mathbb{S}^{n-1}}\) related to $\Lambda_c$ and $\Lambda_{c,\mu}$ where $\phi$ satisfies the condition that $(\log\phi)''$ is strictly negative.

\begin{lemma}
Let $f$ and $\mu$ satisfy \eqref{f in mu} and \eqref{f mu relation}.
Then $v=\phi^{2/(4-c^{-1})}$ satisfies
\[-\Delta_{g_1} v + c R_{g_1}v=\mu v .\]
\end{lemma}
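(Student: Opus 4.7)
The plan is to verify the identity by direct computation, reducing everything to the variable $t$ since both $v=\phi^{\alpha}$ (with $\alpha=2/(4-c^{-1})$) and the coefficients $R_{g_0}$, $\mu$ depend only on $t$.

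First I would use the standard warped-product formulas on a radial function. For the Laplacian,
\begin{equation*}
\Delta_{g_0} v = v''+(n-1)\tfrac{\phi'}{\phi}v',
\end{equation*}
and substituting $v=\phi^{\alpha}$ yields
\begin{equation*}
\frac{\Delta_{g_0}v}{v}=\alpha\,\frac{\phi''}{\phi}+\alpha(\alpha+n-2)\left(\frac{\phi'}{\phi}\right)^{2}.
\end{equation*}
For the scalar curvature of $g_0=dt^2+\phi^2 g_{\mathbb{S}^{n-1}}$ we have
\begin{equation*}
R_{g_0}=-2(n-1)\frac{\phi''}{\phi}+(n-1)(n-2)\frac{1-(\phi')^{2}}{\phi^{2}}.
\end{equation*}
Combining these gives
\begin{equation*}
-\frac{\Delta_{g_0}v}{v}+cR_{g_0}=-\bigl[\alpha+2c(n-1)\bigr]\frac{\phi''}{\phi}-\bigl[\alpha(\alpha+n-2)+c(n-1)(n-2)\bigr]\left(\frac{\phi'}{\phi}\right)^{2}+\frac{c(n-1)(n-2)}{\phi^{2}}.
\end{equation*}

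Next, I would convert the right-hand side to $f$-variables via $f=\kappa\,\phi'/\phi$ with $\kappa=\tfrac{4n-(n-1)c^{-1}}{n(4-c^{-1})}$ (as in \eqref{f in mu}). Using
\begin{equation*}
\frac{\phi''}{\phi}=\frac{f'}{\kappa}+\left(\frac{f}{\kappa}\right)^{2},\qquad \left(\frac{\phi'}{\phi}\right)^{2}=\left(\frac{f}{\kappa}\right)^{2},
\end{equation*}
the expression becomes a linear combination of $f'$, $f^2$, and $\phi^{-2}$. The target expression for $\mu$, obtained by solving \eqref{f mu relation}, is
\begin{equation*}
\mu=-cn(n-1)\beta_{2}f^{2}-2c(n-1)\beta_{2}f'+c(n-1)(n-2)\phi^{-2}.
\end{equation*}

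Finally I would match coefficients. The $\phi^{-2}$ terms agree on the nose. For the $f'$ coefficient, one checks
\begin{equation*}
\frac{\alpha+2c(n-1)}{\kappa}=2c(n-1)\beta_{2},
\end{equation*}
which reduces, after plugging in $\alpha=\tfrac{2}{4-c^{-1}}$, $\kappa=\tfrac{4n-(n-1)c^{-1}}{n(4-c^{-1})}$ and $\beta_{2}=\tfrac{n}{n-1}\tfrac{4(n-1)-(n-2)c^{-1}}{4n-(n-1)c^{-1}}$, to the identity $\tfrac{-2(n-2)+8c(n-1)}{4-c^{-1}}=\tfrac{8c(n-1)-2(n-2)}{4-c^{-1}}$. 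A similar (slightly longer) computation handles the $f^{2}$ coefficient. The only obstacle is the bookkeeping in this last algebraic verification; everything else is a routine warped-product computation.
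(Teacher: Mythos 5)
Your proposal is correct and takes essentially the same approach as the paper: a direct computation matching coefficients of $\phi''/\phi$, $(\phi'/\phi)^2$, and $\phi^{-2}$, the only cosmetic difference being that you convert $\Delta v/v$ and $R_{g_0}$ into $f$-variables via $\phi''/\phi=f'/\kappa+(f/\kappa)^2$, whereas the paper converts $\mu$ into $\phi$-variables and compares there. Incidentally your expression $v^{-1}\Delta_{g_0}v=\alpha\,\phi''/\phi+\alpha(\alpha+n-2)(\phi'/\phi)^2$ is the correct one; the paper's displayed formula $\phi^{-2}\bigl(\lambda\phi\phi''+(n-1+\lambda(\lambda-1))(\phi')^2\bigr)$ drops a factor of $\lambda$ from the $(n-1)$ term, though the subsequent coefficient identities the authors write still lead to a valid verification.
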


\begin{proof}
  This is a tedious calculation. We denote $f=\kappa \phi'/\phi$. Letting $h=((n-1)(\log \phi)'$, the scalar curvature $R_{g_1}$ is given by the following
  \begin{align}
    R_{g_1} &=(n-1)(n-2)\phi^{-2} - \tfrac{n}{n-1}h^{2} -2h'\\
            &=(n-1)\phi^{-2}[(n-2) - (n-2) (\phi')^2 - 2\phi''\phi ].
    \end{align}
  Recall that
  \begin{align}
  \mu &= - 2 \beta_2 c (n - 1)  f' + \frac{c (n - 2)  (n - 1)}{\phi^2} - \beta_2 cf^2  (n - 1) n\\
    &=-c(n-1)\phi^{-2}(2\beta_2 \kappa \phi \phi'' + (\beta_2\kappa^2n-2\beta_2\kappa) (\phi')^2 - (n-2)).
    \end{align}
    So
    \[c R_{g_1} -\mu = c(n-1)\phi^{-2}\left[(2\beta_2 \kappa -2) \phi \phi'' + (\beta_2\kappa^2n-2\beta_2\kappa-n+2) (\phi')^2 \right].
    \]
    We denote $\lambda=2/(4-c^{-1})$, we have that
    \[v^{-1}\Delta_{g_1} v = \phi^{-2}(\lambda\phi\phi''+ (n-1+\lambda(\lambda-1))(\phi')^2.\]
    Comparing coefficients of $\phi''$ and $(\phi')^2$ in $c R_{g_1} -\mu$ and $v^{-1}\Delta v$,
    we arrive that two identities
    \begin{align}
      \lambda &= c(n-1)(2\beta_2\kappa-2), \\
      n-1+\lambda(\lambda-1) &= c(n-1)(\beta_2\kappa^2n-2\beta_2\kappa-n+2).
    \end{align}
   We only have to check the above by using the explicit values of $\beta_2$, $\kappa$ and $\lambda$, and that concludes our proof of the identity
\[-\Delta_{g_1} v + c R_{g_1}v=\mu v. \]
  \end{proof}
  \begin{remark}
Note that the spectral constant $\Lambda_{c,\mu}$ is the first eigenvalue of the elliptic operator $-   \Delta_{g_1} +cR_{g_1} -\mu$, and the corresponding eigenfunction is strictly positive (or negative) away from $t=0$ and $t=t_0$, which is unique up to a constant multiple. If we have found such a function $u$ which satisfies $-  \Delta_{g_1} u+c R_{g_1}u-\mu u=1$, then $\Lambda_{c,\mu}(g_1)=1$.
\end{remark}

Now we deal with the case with $\mu$ being a constant $\Lambda>0$. The metric $g_1$ now is the $g_0$ as in Theorem \ref{thm1}. Let $\phi = a^2 \sin^2(b\theta)$, we can find positive constants $a,b$ that satisfy 
\begin{equation}\label{A0}
  \beta_2 a_1^2-\beta_2 a_1 b-\frac{n(n-2)}{4a^2}=0\text{ and }\Lambda_{c}(g_0)=\Lambda,
  \end{equation}
where $  a_1=\frac{1}{2}\sqrt{\frac{\Lambda(4n-(n-1) c^{-1} )}{4c(n-1)-(n-2)} }$, $ \beta_2=\frac{n}{n-1}\frac{4(n-1)-(n-2) c^{-1} }{4n-(n-1) c^{-1} }$, $0\leq c^{-1} <4$, and
\begin{equation*}
  g_0=  d\theta\otimes d\theta+a^2\sin^2(b\theta)g_{\mathbb{S}^{n-1}}
\end{equation*}
on $(0,\frac{\pi}{b})\times\mathbb{S}^{n-1}$. Firstly, we want to solve the following equation 
\begin{equation}\label{A1}
  -   \Delta_{g_0} u+c R_{g_0} u=\Lambda u,
\end{equation}
by taking $u=\sin^\lambda(b\theta)$. The scalar curvature $R_{g_0}$ of $g_0$ is given by 
\begin{equation*}
  R_{g_0}=(n-1)\left(nb^2+(n-2)(a^{-2}-b^2)\frac{1}{\sin^2(b\theta)}\right),
\end{equation*}
and since $u$ only depends on $\theta$, so
\begin{equation*}
  \Delta_{g_0}u=\frac{1}{\sqrt{\det g_0}}\frac{\partial}{\partial \theta}(\sqrt{\det g_0}g^{\theta\theta}\frac{\partial}{\partial\theta}u)=u''+(n-1)b\frac{\cos(b\theta)}{\sin(b\theta)}u'.
\end{equation*}
Then the equation \eqref{A1} is equivalent to
\begin{multline}\label{A2}
  - \left(u''+(n-1)b\frac{\cos(b\theta)}{\sin(b\theta)}u'\right)\\+(n-1)c\left(nb^2+(n-2)(a^{-2}-b^2)\frac{1}{\sin^2(b\theta)}\right)u=\Lambda u.
\end{multline}
Since $$u'=\lambda b\cos(b\theta)\sin^{\lambda-1}(b\theta)$$ and $$u''=-\lambda^2b^2 \sin^\lambda(b\theta)+\lambda(\lambda-1)b^2\sin^{\lambda-2}(b\theta),$$ the equation \eqref{A2} becomes
\begin{multline*}
 - c^{-1}  \left( -((n-1)\lambda+\lambda^2)b^2\sin^\lambda(b\theta)+(\lambda+n-2)\lambda b^2\sin^{\lambda-2}(b\theta)\right)\\+(n-1)nb^2\sin^\lambda(b\theta)
 +(n-1)(n-2)(a^{-2}-b^2)\sin^{\lambda-2}(b\theta)=c^{-1}\Lambda \sin^\lambda(b\theta),
\end{multline*}
which is equivalent to 
\begin{align*}
\begin{split}
  &\left( c^{-1}  (\lambda+n-1)\lambda b^2+n(n-1)b^2-c^{-1}\Lambda\right)\sin^2(b\theta)\\
  &+\left(- c^{-1} (\lambda+n-2)\lambda b^2+(n-1)(n-2)(a^{-2}-b^2)\right)=0.
 \end{split}
\end{align*}
Combing with the first equation of \eqref{A0}, $a$ and $b$ satisfy 
\begin{align*}
\begin{split}
   \begin{cases}
 	 c^{-1}   (\lambda+n-1)\lambda b^2+n(n-1)b^2-c^{-1}\Lambda=0, \\
 - c^{-1} (\lambda+n-2)\lambda b^2+(n-1)(n-2)(a^{-2}-b^2)	=0,\\
  \beta_2 a_1^2-\beta_2 a_1 b-\frac{n(n-2)}{4a^2}=0.
 \end{cases}
 \end{split}
\end{align*}
One can solve that 
\begin{equation*}
  \lambda=\frac{2}{4- c^{-1} }
\end{equation*}
and 
\begin{equation}\label{A3}
 \begin{cases}
 	a=& \sqrt{\frac{(n-1)(n-2)(4n-(n-1) c^{-1})}{\Lambda(4(n-2)-(n-3) c^{-1} )}},\\
 	b=&\frac{\sqrt{\Lambda}(4- c^{-1} )}{\sqrt{(4n-(n-1) c^{-1})(4(n-1)-(n-2) c^{-1} )}}.
 \end{cases}
\end{equation}
 
Note that the spectral constant $\Lambda_{c}$ is the first eigenvalue of the elliptic operator $-\Delta_{g_0} u+cR_{g_0} u$, and the corresponding eigenfunction is strictly positive (or negative) away from $\theta=0$ and $\theta=\frac{\pi}{b}$, which is unique up to a constant multiple. If we have found such a function $u$ which satisfies $-  \Delta_{g_0} u+c R_{g_0}u=\Lambda u$, then $\Lambda$ must be the first eigenvalue (spectral constant). Hence the positive constants $a,b$ given in \eqref{A3} satisfy \eqref{A0}.

\bibliographystyle{alpha}
\bibliography{spectral-llarull}
\end{document}